\crefname{hypothesis}{Hypothesis}{Hypotheses}
\title{An efficient iteration for the extremal solutions of discrete-time algebraic Riccati equations}
\author{Chun-Yueh Chiang\thanks{Center for General Education, National Formosa University, Huwei 632, Taiwan, R.O.C.
  (\email{chiang@nfu.edu.tw}).} 
\and Hung-Yuan Fan\thanks{Department of Mathematics, National Taiwan Normal University, Taipei 116325, Taiwan, R.O.C.
  (\email{hyfan@ntnu.edu.tw}).}
}
\begin{document}

\maketitle

\begin{abstract}
Algebraic Riccati equations (AREs) have been extensively applicable in linear optimal control problems and many efficient numerical methods were developed.
The most attention of numerical solutions is the (almost) stabilizing solution in the past works. Nevertheless, it is an interesting and challenging issue
in finding the extremal solutions of AREs which play a vital role in the applications. In this paper, based on the semigroup property,
an accelerated fixed-point iteration (AFPI) is developed for solving the extremal solutions of the discrete-time algebraic Riccati equation.
In addition, we prove that the convergence of the AFPI is at least R-suplinear with order $r>1$ under some mild assumptions.  Numerical examples are shown to illustrate the feasibility and efficiency of the proposed algorithm.
\end{abstract}

\begin{keywords}
  discrete-time algebraic Riccati equation, extremal solution, stabilizing solution, antistabilizing solution,
  accelerated fixed-point iteration, semigroup property
\end{keywords}

\begin{AMS}
  39B12, 39B42, 65H05, 15A24
\end{AMS}

\section{Introduction}
In this paper we are mainly concerned with the extremal solutions of the
discrete-time algebraic Riccati equation (DARE)
\begin{subequations}\label{dare}
\begin{align}
  X = A^H XA - A^H XB(R+B^H XB)^{-1} B^H XA + C^H C, \label{dare-a}
\end{align}
{\color{black} or its equivalent expression}
 \begin{align}
 X = A^H X(I + GX)^{-1}A + H, \label{dare-b}
\end{align}
\end{subequations}
where $A\in \mathbb{C}^{n\times n}$, $B\in \mathbb{C}^{n\times m}$, $R\in \mathbb{C}^{m\times m}$ is positive definite,
$C\in \mathbb{C}^{l\times n}$ with $m, l \leq n$, $I$ is the identity matrix of compatible size,
$G:= BR^{-1}B^H$ and $H=C^H C$, respectively, {\color{black}and the $n$-square matrix $X$ is the unknown Hermitian matrix that is to be determined. For the sake of simplicity, the matrix operator $\mathcal{R}:\mbox{dom}(\mathcal{R})\rightarrow \mathbb{H}_n$ is defined by $\mathcal{R}(X):=A^H X(I + GX)^{-1}A + H$ , which is used to rewrite the equation \cref{dare-b} into the compact expression $X=\mathcal{R}(X)$, where $\mathbb{H}_n$ is the set of all $n\times n$ Hermitian matrices and $\mbox{dom}(\mathcal{R}):=\{X\in\mathbb{H}_n\, |\, \det(R+B^HXB)\neq 0\}$. The following sets
\begin{align*}
& \mathcal{R}_= := \{X\in \mbox{dom}(\mathcal{R})\, |\, X = \mathcal{R}(X) \},\quad  \mathcal{R}_\geq := \{X\in \mbox{dom}(\mathcal{R})\, |\, X \geq \mathcal{R}(X) \}, \label{Rgeq} 
\end{align*}
will play an important role in our main results given below. }

The nonlinear matrix equation of the form \cref{dare-a} arises from the linear-quadratic (LQ) optimal control problem that minimizes the cost functional
\[  \mathcal{J}(u) := \sum_{k=0}^\infty (y_k^H y_k + u_k^H R u_k)  \]
subject to the linear discrete-time system
\[ x_{k+1} = Ax_k + Bu_k,\quad y_k = Cx_k,\quad k\geq 0, \]
 where $x_k\in \mathbb{C}^n$ is the state variable, $u_k\in \mathbb{C}^m$ is the control variable and $y_k$ is the output variable, respectively. {\color{black} Let $F_X = (R + B^HXB)^{-1}B^H XA$ and $T_X = A-BF_X $ for any Hermitian solution $X$ of \cref{dare}. Note that $T_X\equiv (I+GX)^{-1}A$.} If $(A,B)$ is stabilizable and $(A,C)$ is detectable, {\color{black} it is well-known that there exists an optimal state feedback control
 \begin{equation*} 
 u_k^* := -F_{X_{\rm s}} x_k,\quad k \geq 0
 \end{equation*}
 such that the cost functional $\mathcal{J}(u_k^*) = x_0^H {X_{\rm s}} x_0$ is minimized
 and the corresponding closed-loop system
 \begin{equation*} 
 x_{k+1} := T_{X_{\rm s}} x_k,\quad k\geq 0
 \end{equation*}
 is asymptotically stable, i.e., all eigenvalues of the close-loop matrix $T_{X_{\rm s}}$ are inside the open unit disk
 $\mathbb{D}$ in the complex plane, where ${X_{\rm s}}\geq 0$ is the unique stabilizing solution of the DARE \cref{dare-a},}
 see, e.g., \cite{a85,k.s72}. Here, we denote $M \geq 0$ (resp.\ $M > 0$) if $M\in \mathbb{C}^{n\times n}$
 is a Hermitian positive semidefinite (resp.\ positive definite) matrix.
 Analogously, we denote $M \leq 0$ (resp.\ $M < 0$) if $M$  is a Hermitian negative semidefinite (resp.\ negative definite) matrix. A matrix operator $f:\mathbb{H}_{n}\rightarrow\mathbb{H}_{n}$ is order preserving (resp. reversing)  on $\mathbb{H}_{n}$ if $f(A)\geq f(B)$ (resp.  $f(A)\leq f(B)$) when $A\geq B$ and $A,B\in\mathbb{H}_{n}$.
 It is proved that the operator $\mathcal{R}(\cdot)$ is order preserving.
 Throughout the paper 
 the set of all $n\times n$ Hermitian positive semidefinite (resp.\  negative semidefinite) matrices
 is denoted by $\mathbb{N}_n$ (resp.\ $-\mathbb{N}_n$).
For any $M,N\in \mathbb{H}_n$, we write $M \geq N$ (resp.\ $M \leq N$) if $M - N \geq 0$ (resp.\ $M-N \leq 0$).
Moreover, $\sigma (M)$ and $\rho (M)$ are the spectrum and the spectral radius
of the square matrix $M$, {\color{black}respectively}, and define the quantities
\[ \mu(M) := \min \{|\lambda|\ |\ \lambda\in \sigma (M)\},\quad
\rho_{\mathbb{D}} (M) := \max \{|\lambda|\ |\ \lambda\in \sigma (M)\cap \mathbb{D} \}, \]
which will be used below.
We also denote the closed unit disk by $\bar{\mathbb{D}}$, the boundary of $\mathbb{D}$ by $\partial \mathbb{D}$,
the region outside the open unit disk by $\mathbb{D}^c$, and the spectral or Euclidean norm for matrices and vectors in the context by $\|\cdot \|$ , respectively.

If $(A,B)$ is stabilizable and $(A,C)$ is detectable, which are standard assumptions in the LQ optimal control problem mentioned previously,
then $\mathcal{R}_\geq \cap \mathbb{N}_n = \{X_{\rm s}\}$.
In this case, there are dozens of numerical methods for solving the DAREs with small to medium sizes
 in the literature, see, e.g., \cite{c.f.l.w04,g.k.l92,k88,l.r95,l79,p.l.s80,v81} and  the references therein.
 The standard way for computing the stabilizing solution is to utilize the Schur method \cite{g.k.l92, l79}, which solves the
 stable deflating subspace $\mathcal{V} = \mathrm{span}\big(\begin{bmatrix}
   X_1\\ X_2
 \end{bmatrix}\big)$ of the associated symplectic pencil
 \begin{equation*}
 M - \lambda L := \begin{bmatrix}
   A & 0\\ -H & I
 \end{bmatrix} - \lambda \begin{bmatrix}
   I & G\\ 0 & A^H  \end{bmatrix}
   \end{equation*}
using the ordered QZ algorithm, and thus $X=X_2 X_1^{-1}\geq 0$ is the unique stabilizing solution of the DARE \cref{dare-b}.

On the other hand, regarding the DARE \cref{dare-a} or \cref{dare-b} as a nonlinear matrix equation, it is natural to apply Newton's method
for finding the unique stabilizing solution $X_{\rm s}$ under the stabilizability and detectability conditions, see, e.g., \cite{l.r95,m91}.
Moreover, since $H = C^H C\geq 0$, it follows from Theorem 1.1 of \cite{g98} that $X_{\rm s}$ is also the maximal solution
to the DARE \cref{dare-b} satisfying $X_{\rm s}\geq S$ for all $S\in \mathcal{R}_=$, see also \cite[Theorem 13.1.1]{l.r95}.
In addition, stating with some initial $X_0\in \mathbb{H}_n$, the author proposed the theoretical characterizations
for the linearly convergent behavior of Newton's method when solving the DARE \cref{dare-b}
with all unimodular eigenvalues of $T_X$ being semisimple.
Inspired by the doubling algorithm \cite{a77,k88}, starting from the coefficient matrices $A$, $G$ and $H$,
Chu {\em et.\ al.} proposed structure-preserving doubling algorithms (SDAs) \cite{c.f.l.w04} for solving the DARE \cref{dare-b}
and periodic DAREs, respectively. When the control weighting matrix $R\in \mathbb{H}_n$ is singular, we developed a variant of the structured doubling algorithm
for computing the stabilizing solution to the DARE \cref{dare-a}, please refer to \cite{c.f.l10}.

It is well known that the set $\mathcal{R}_\geq \cap \mathbb{N}_n$ associated with the DARE \cref{dare-b}, equipped with the partial ordering ``$\geq$'', forms
a complete lattice if $(A,B)$ is stabilizable and $(A,C)$ is not detectable, see, e.g., \cite{k72,w96}.
Under the same assumptions the DARE \cref{dare-b} might have the almost stabilizing solution $X_s\geq 0$ with $\rho (T_{X_s}) \leq 1$, which is the maximal element of the solution set $\mathcal{R}_=$,
 and the optimizing solution $Y\geq 0$ with the cost functional $\mathcal{J}(u_k^*) = x_0^H Y x_0$ being minimized, which is the minimal element of the solution set
 $\mathcal{R}_\geq \cap \mathbb{N}_n$. On the other hand, it follows from \cite{j81,w95} that the DARE \cref{dare-b} has a unique almost antistabilizing solution
 $\widehat{X}\in -\mathbb{N}_n$ with $\mu (T_{\widehat{X}})\geq 1$, which is the minimal element  of the solution set $\mathcal{R}_=$ satisfying
 $\widehat{X}\leq S$ for all $S\in \mathcal{R}_=$, if $A$ is nonsingular and  $\mathrm{rank} \left[ A-\lambda I, B\right] = n$ for all $\lambda \in \bar{\mathbb{D}}\backslash\{0\}$. Under the same assumptions, it is shown in Theorem 3.1 of \cite{z.c.l17} that
 $\widehat{Y} = -\widetilde{Y}$ is the maximal element of the solution set $\mathcal{R}_=\cap -\mathbb{N}_n$ if and only if
$\widetilde{Y}$ is the minimal positive semidefinite solution to the dual DARE of first kind
\begin{equation}\label{ddare}
  {X} = \widetilde{A}^H \widetilde{X} \widetilde{A} - (\widetilde{C}^H + \widetilde{A}^H {X}\widetilde{B})
(\widetilde{R} + \widetilde{B}^H {X} \widetilde{B})^{-1}(\widetilde{C} + \widetilde{B}^H {X} \widetilde{A}) + \widetilde{H},
\end{equation}
where the coefficient matrices are geiven by
\begin{equation}\label{tldA}
\widetilde{A} = A^{-1},\quad \widetilde{B} = A^{-1}B,\quad \widetilde{H} = (A^{-1})^H H A^{-1},\quad \widetilde{C} = B^H \widetilde{H},\quad
\widetilde{R} = R + \widetilde{C}B,
\end{equation}
respectively. The Riccati equation \cref{ddare} is also called the {\em reverse discrete-time algebraic Riccati equation} presented in \cite{i96},
and a numerically stable algorithm was proposed in \cite{o96} for computing the minimal and antistabilizing solution $\widehat{X}\in -\mathbb{N}_n$ to
the DARE \cref{dare-a} with nonsingular $A$.

Recently, it is shown in Theorem 5.1 of \cite{z.c.l17} that the existence and uniqueness of the minimal solution to the DARE \cref{dare-a}
can be constructed iteratively, through a Newton-type iteration for computing the maximal solution to the dual DARE \cref{ddare},
when $\mathrm{rank} \left[ A-\lambda I, B\right] = n$ for all $\lambda \in \bar{\mathbb{D}}\backslash\{0\}$ and $A$ is nonsingular.
In this paper, applying the methodology of the fixed-point iteration (FPI), we we will give constructive proofs for the existence (or uniqueness) theorems of
these four extremal solutions to the DARE under some wild and reasonable assumptions.
More restrictive assumptions have been addressed in \cite{e.r02} for the uniqueness of the maximal solution to the DARE \cref{dare-b}under the framework
of the FPI.
Starting with some appropriate initial matrices, we will develop the accelerated fixed-point iteration (AFPI), based on the semigroup property of the DARE
with an integer parameter $r\geq 2$ \cite{l.c20}, for computing the extermal elements $(X,Y)$ of the set $\mathcal{R}_\geq \cap \mathbb{N}_n$ simultaneously.
 Specifically, the AFPI with the initial $X_0 = 0$ is just the SDA presented in \cite{c.f.l.w04} when $r=2$.
 In analogous way, a variant of the AFPI will be derived from the dual DARE \cref{ddare}
 for solving the extremal elements $(\widehat{X}, \widehat{Y})$ of the set $\mathcal{R}_=\cap -\mathbb{N}_n$ simultaneously,
 if $(A,B)$ is antistabilizable and $A$ is nonsingular.

The paper is organized as follows. In Section 2 we present some preliminary lemmas
that will be used in our main theorems. Under some wild assumptions, the existence (or uniqueness) of the extremal solutions
to the DARE will be constructed iteratively via the standard FPI in Section 3. Moreover,
based on the framework of the semigroup property for the DARE \cref{dare-b}, two accelerated variants of the FPI will
be proposed for computing the extremal elements of $\mathcal{R}_=\cap\mathbb{N}$ and $\mathcal{R}_=\cap -\mathbb{N}_n$, respectively,
and will discuss the rate of convergence for the AFPI presented in this section.
In Section 4 some illustrate examples are presented for demonstrating the feasibility and efficiency
of the proposed AFPI($r$) with different values of $r$. Finally, we conclude this paper in Section 5.

\section{Preliminaries} \label{sec2}
In this section we introduce some definitions and auxiliary results that will be used below. Firstly, the concepts of R-linear and R-superlinear convergence of a sequence will be used
in the following sections, see, e.g., Definition 1.1 of \cite{l.c20}.
\begin{definition} \label{def-Rconv}
Let $\{X_k\}_{k=0}^\infty$ be a sequence of $n\times n$ complex matrices and $\|\cdot\|$ be an induced matrix norm.
\begin{enumerate}
  \item[(a)] $\{X_k\}_{k=0}^\infty$ converges at least R-linearly to $X_*\in \mathbb{C}^{n\times n}$, if there exists a scalar $\sigma \in (0,1)$ such that
  \[  \limsup_{k\rightarrow \infty} \sqrt[k]{\|X_k - X_*\|} \leq \sigma. \]
\item[(b)] $\{X_k\}_{k=0}^\infty$ converges at least R-superlinearly to $X_*\in \mathbb{C}^{n\times n}$ with order $r>1$,
if there exists a scalar $\sigma \in (0,1)$ such that
  \[  \limsup_{k\rightarrow \infty} \sqrt[r^k]{\|X_k - X_*\|} \leq \sigma. \]
\end{enumerate}
\end{definition}

Let the Stein operator $\mathcal{S}_A:\mathbb{H}_n\rightarrow\mathbb{H}_n$ associated with a matrix $A\in\mathbb{C}^{n\times n}$ be defined by
\begin{align}\label{SME}
\mathcal{S}_A(X) := X-A^H X A,
\end{align}
for all $X\in\mathbb{H}_n$. In general, the operator $\mathcal{S}_A$ is neither order-preserving nor order-reversing.
However, under the assumption that $\rho(A)<1$,
the inverse operator $\mathcal{S}^{-1}_A$ exists and it is order-preserving, since
$\mathcal{S}_A^{-1}(X)=\sum\limits_{k=0}^\infty (A^k)^H X A^k \geq \sum\limits_{k=0}^\infty (A^k)^H Y A^k=\mathcal{S}_A^{-1}(Y)$
for all $X\geq Y$. For the sake of clarity the results in this section can be separated into three major parts.
\subsection{Some useful identities}
The following lemma provides some identities with respect to the Stein matrix operator defined by \cref{SME},
which will play an important role in our main results below.
\begin{lemma} \label{lem2p1}
{\color{black}
Let $X,\widehat{X}\in\mbox{dom}(\mathcal{R})$. The following identities hold, which state the relationship between DARE and a specific Stein matrix equation:
}
\begin{subequations}\label{Req}
\begin{enumerate}
  \item[(i)] If $A_{F}:=
  A-BF$ for any $F\in\mathbb{C}^{m\times n}$ and $H_F:=H+F^H R F$, then
  \begin{align}
  X-\mathcal{R}(X)&=\mathcal{S}_{A_{F}}(X)-H_F+K_F(X), \label{Req-a}
  \end{align}
  where $K_F(X) := (F-F_X)^H(R+B^H X B)(F-F_X)$.
  \item[(ii)] If $K(\widehat{X},X) := K_{F_{\widehat{X}}}(X)$ and
  $H_{\widehat{X}} := H+F_{\widehat{X}}^H R F_{\widehat{X}}$,
  then \eqref{Req-a} can be rewritten as
  \begin{align}
  X-\mathcal{R}(X)&=\mathcal{S}_{T_{\widehat{X}}}(X)-H_{\widehat{X}}+K(\widehat{X},X). \label{Req-b}
  \end{align}
Furthermore, we also have
\begin{align}\label{K}
X-\mathcal{R}(X)=\mathcal{S}_{T_{\widehat{X}}}(X)-H+\left[ K(\widehat{X},X)-K(\widehat{X},0) \right].
\end{align}

  \item[(iii)] If $U_{\widehat{X}} := \widehat{X} T_{\widehat{X}}$, then $B^H U_{\widehat{X}}=R F_{\widehat{X}}$,
  $H_{\widehat{X}} = H+U_{\widehat{X}}^H G U_{\widehat{X}}$ and thus \eqref{Req-b} can be rewritten as
\begin{align}
 X-\mathcal{R}(X)&=\mathcal{S}_{T_{\widehat{X}}}(X)- H_{\widehat{X}}+(U_{\widehat{X}}-U_X)^H(G+GXG)(U_{\widehat{X}}-U_X). \label{Req-c}
  \end{align}
\end{enumerate}
\end{subequations}
\end{lemma}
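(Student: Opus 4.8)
The three identities are purely algebraic, involving no convergence or analytic input, so the plan is to verify each by direct manipulation, treating (i) as the master identity from which (ii) and (iii) follow by specialization and rewriting. Throughout I would abbreviate $R_X := R + B^H X B$, so that $F_X = R_X^{-1} B^H X A$, and I would first record the equivalence of the two forms of the DARE, namely the Woodbury-type identity $A^H X(I + GX)^{-1}A = A^H X A - A^H X B R_X^{-1} B^H X A$; combined with $A^H X B R_X^{-1} B^H X A = F_X^H R_X F_X$ this gives the convenient form $\mathcal{R}(X) - H = A^H X A - F_X^H R_X F_X$.

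For (i), the key observation is that the right-hand side is a completion of squares. Writing $g(F) := A_F^H X A_F + F^H R F$ with $A_F = A - BF$, I would expand and collect the terms that are quadratic, linear, and constant in $F$ to obtain $g(F) = A^H X A + (F - F_X)^H R_X (F - F_X) - F_X^H R_X F_X$, whose critical point is exactly $F_X$. Recognizing $A^H X A - F_X^H R_X F_X = \mathcal{R}(X) - H$ and $(F - F_X)^H R_X (F - F_X) = K_F(X)$, this reads $g(F) = \mathcal{R}(X) - H + K_F(X)$; substituting back $g(F) = A_F^H X A_F + F^H R F$ and rearranging $X - \mathcal{R}(X)$ then yields \eqref{Req-a}. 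I expect the only delicate part here to be the bookkeeping of the cross terms $A^H X B F$, $F^H B^H X A$ and $F^H B^H X B F$, which must cancel against the corresponding terms from expanding $K_F(X)$; keeping $R_X F_X = B^H X A$ in hand makes the cancellations transparent.

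For (ii), setting $F = F_{\widehat{X}}$ turns $A_F$ into $T_{\widehat{X}} = A - B F_{\widehat{X}}$, $H_F$ into $H_{\widehat{X}}$, and $K_F(X)$ into $K(\widehat{X}, X)$, so \eqref{Req-b} is immediate from \eqref{Req-a}. The ``furthermore'' identity \eqref{K} then reduces to the single claim $H_{\widehat{X}} = H + K(\widehat{X}, 0)$, which I would verify by evaluating at $X = 0$, where $F_0 = 0$ and $R_0 = R$ give $K(\widehat{X}, 0) = F_{\widehat{X}}^H R F_{\widehat{X}} = H_{\widehat{X}} - H$.

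For (iii), the substantive step, and the one I expect to be the main obstacle, is the auxiliary identity $B^H U_{\widehat{X}} = R F_{\widehat{X}}$, since this is the only place where the $(I + G\widehat{X})^{-1}$ form of $T_{\widehat{X}}$ must be reconciled with the $(R + B^H\widehat{X}B)^{-1}$ form of $F_{\widehat{X}}$. The cleanest route is to right-multiply the desired equality $B^H \widehat{X}(I + G\widehat{X})^{-1} = R\, R_{\widehat{X}}^{-1} B^H \widehat{X}$ by $(I + G\widehat{X})$ and use $G = B R^{-1} B^H$ together with the factorization $R_{\widehat{X}} R^{-1} = I + B^H\widehat{X}B R^{-1}$. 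Granting this, $H_{\widehat{X}} = H + U_{\widehat{X}}^H G U_{\widehat{X}}$ follows from $U_{\widehat{X}}^H G U_{\widehat{X}} = (B^H U_{\widehat{X}})^H R^{-1}(B^H U_{\widehat{X}}) = F_{\widehat{X}}^H R F_{\widehat{X}}$, and finally the quadratic form in \eqref{Req-c} follows by writing $F_{\widehat{X}} - F_X = R^{-1} B^H(U_{\widehat{X}} - U_X)$ and computing $B R^{-1} R_X R^{-1} B^H = G + GXG$, which converts $K(\widehat{X}, X)$ into $(U_{\widehat{X}} - U_X)^H(G + GXG)(U_{\widehat{X}} - U_X)$ and hence turns \eqref{Req-b} into \eqref{Req-c}.
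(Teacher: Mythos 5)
Your proposal is correct and follows essentially the same route as the paper: part (i) is the same expansion of the quadratic term $F_X^H(R+B^HXB)F_X$ with cancellation of the cross terms (your completion-of-squares framing of $A_F^HXA_F+F^HRF$ is just a tidier organization of the identical algebra), part (ii) is the same specialization $F=F_{\widehat X}$ together with $K(\widehat X,0)=F_{\widehat X}^HRF_{\widehat X}$, and part (iii) rests on the same identity $B^HU_{\widehat X}=RF_{\widehat X}$ proved by the same right-multiplication by $I+G\widehat X$.
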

\begin{proof}
\begin{enumerate}
  \item[(i)]
  Observe that
 \[
 X-\mathcal{R}(X)=\Gamma_1(X)+\Gamma_2(X),
 \]
where $\Gamma_1(X):=\mathcal{S}_A(X)-H$ and $\Gamma_2(X):=A^H X B(R+B^H X B)^{-1}B^H X A$. A direct computation yields
\begin{align*}
&\Gamma_2(X)=F_X^H(R+B^H X B)F_X=K_F(X)+[F^H(R+B^H X B)F_X\\
&+F_X^H(R+B^H X B)F]-F^H(R+B^H X B)F\\
&=K_F(X)-F^H R F+[F^HB^H X A+A^H X BF-F^HB^H X BF]\\
&=K_F(X)-F^H R F+\mathcal{S}_{A_F}(X)-\mathcal{S}_{A}(X).
\end{align*}
We conclude that
\begin{align*}
 X-\mathcal{R}(X)&=(\mathcal{S}_{A}(X)-H)+(K_F(X)-F^H R F+\mathcal{S}_{A_F}(X)-\mathcal{S}_{A}(X))\\
 &=\mathcal{S}_{A_{F}}(X)-H_F+K_F(X).
\end{align*}
\item[(ii)] The first result is clearly true from \eqref{Req-a}. For the proof of the remaining part, an easy computation shows that
    \begin{align*}
     RF_{\widehat{X}}&=B^H\widehat{X}(I+BR^{-1}B^H\widehat{X})^{-1}A= B^H\widehat{X} T_{\widehat{X}}, \\
      F_{\widehat{X}}^H R F_{\widehat{X}}&=(R F_{\widehat{X}})^H R^{-1} (R F_{\widehat{X}})=(\widehat{X}T_{\widehat{X}})^H G (\widehat{X}T_{\widehat{X}}).
    \end{align*}
    We immediately obtain this formula due to $K(\widehat{X},0)=F_{\widehat{X}}^H R F_{\widehat{X}}$.
   \item[(iii)] A trivial verification shows that
\begin{align*}
  B^HU_{\widehat{X}} &=B^H \widehat{X} T_{\widehat{X}} = (R+B^H \widehat{X} B)^{-1}B^H \widehat{X}A =R F_{\widehat{X}}, \\
  H_{\widehat{X}} &= H + F_{\widehat{X}}^H R F_{\widehat{X}} = H + U_{\widehat{X}}^H B R B^H U_{\widehat{X}} = H + U_{\widehat{X}}^HGU_{\widehat{X}}.
\end{align*}
The result \eqref{Req-c} follows from \eqref{Req-b} immediately.
\end{enumerate}
\end{proof}

In addition, under different setting, Eq.~\eqref{K} can be reformulated as follows.
\begin{subequations}\label{K123}
\begin{enumerate}
  \item If $X=\widehat{X}\in\mathbb{H}_n$, then
  \begin{equation}\label{K2}
\mathcal{S}_{T_{\widehat{X}}}(\widehat{X})=\widehat{X}-\mathcal{R}(\widehat{X})+H+K(\widehat{X},0).
\end{equation}
  \item If $X\in \mathcal{R}_=$ and $\widehat{X}\in\mathbb{H}_n$, then
\begin{equation}\label{K1}
\mathcal{S}_{T_{\widehat{X}}}(X)=H+\left[ K(\widehat{X},0)-K(\widehat{X},X)\right].
\end{equation}

  \item If $X=\widehat{X}\in \mathcal{R}_=$, then
  \begin{equation}\label{K3}
\mathcal{S}_{T_{X}}(X)=H+K(X,0).
\end{equation}
\end{enumerate}
\end{subequations}

For any $X,\widehat{X}\in\mathbb{H}_n$, consider a subset of $\mathbb{H}_n$ defined by
\begin{equation}\label{S1}
  \mathcal{S}_= := \{Y\in\mathbb{H}_n\,|\, \mathcal{S}_{T_{\widehat{X}}}(Y)=K(\widehat{X},X)\}.
\end{equation}
Clearly, from~\eqref{K2}, the solution set $\mathcal{R}_=$ has the equivalent expression given by
\begin{equation}\label{S2}
  \mathcal{R}_= = \{Y\in\mathbb{H}_n\,|\, \mathcal{S}_{T_Y}(Y)=H+K(Y,0)\}.
\end{equation}
The relationship between these two sets $\mathcal{R}_=$ and $\mathcal{S}_=$ is characterized in the following lemma,
which will be used in the proof of \cref{thm3p3} later on.
\begin{lemma} \label{lem2p7}
For $\mathcal{S}_=$ and $\mathcal{R}_=$ defined by \eqref{S1}--\eqref{S2}, the following statements hold:
\begin{enumerate}
  \item[(i)] If $X,\widehat{X}\in \mathcal{R}_=$, then $\widehat{X}-X\in \mathcal{S}_=$.
  \item[(ii)] If $X\in \mathcal{R}_=$ and $\widehat{X}-X\in \mathcal{S}_=$, then $\widehat{X}\in \mathcal{R}_=$.
  \item[(iii)] If $\widehat{X}\in \mathcal{R}_=$ and $\widehat{X}-X\in \mathcal{S}_=$, then ${X}\in \mathcal{R}_=$.
\end{enumerate}
\end{lemma}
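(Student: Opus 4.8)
The plan is to exploit the linearity of the Stein operator $\mathcal{S}_{T_{\widehat{X}}}$ together with the reformulated identities \eqref{K}, \eqref{K1}, and \eqref{K2} established in \cref{lem2p1}. Since each of the three statements merely concerns membership in the sets $\mathcal{R}_=$ and $\mathcal{S}_=$, and both sets are cut out by linear (Stein) equations, I expect all three parts to reduce to direct algebraic substitution, with no genuine obstacle beyond careful bookkeeping. The first thing to record is that, by \eqref{S1}, the phrase ``$\widehat{X}-X\in\mathcal{S}_=$'' unfolds to the single equation $\mathcal{S}_{T_{\widehat{X}}}(\widehat{X}-X)=K(\widehat{X},X)$, in which the closed-loop matrix $T_{\widehat{X}}$ is built from $\widehat{X}$.

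For part (i), I would expand $\mathcal{S}_{T_{\widehat{X}}}(\widehat{X}-X)=\mathcal{S}_{T_{\widehat{X}}}(\widehat{X})-\mathcal{S}_{T_{\widehat{X}}}(X)$ by linearity. Since $\widehat{X}\in\mathcal{R}_=$ gives $\widehat{X}-\mathcal{R}(\widehat{X})=0$, identity \eqref{K2} yields $\mathcal{S}_{T_{\widehat{X}}}(\widehat{X})=H+K(\widehat{X},0)$; and since $X\in\mathcal{R}_=$, identity \eqref{K1} yields $\mathcal{S}_{T_{\widehat{X}}}(X)=H+K(\widehat{X},0)-K(\widehat{X},X)$. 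Subtracting, the common term $H+K(\widehat{X},0)$ cancels and what remains is exactly $K(\widehat{X},X)$, so $\widehat{X}-X\in\mathcal{S}_=$.

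For parts (ii) and (iii), the very same identities are recombined in the reverse direction. In (ii), I would add the given relation $\mathcal{S}_{T_{\widehat{X}}}(\widehat{X}-X)=K(\widehat{X},X)$ to the identity \eqref{K1} for $\mathcal{S}_{T_{\widehat{X}}}(X)$ (legitimate because $X\in\mathcal{R}_=$), which produces $\mathcal{S}_{T_{\widehat{X}}}(\widehat{X})=H+K(\widehat{X},0)$; comparing this against \eqref{K2} then forces $\widehat{X}-\mathcal{R}(\widehat{X})=0$, i.e.\ $\widehat{X}\in\mathcal{R}_=$. In (iii), I would instead begin from the master identity \eqref{K}, evaluate $\mathcal{S}_{T_{\widehat{X}}}(X)=\mathcal{S}_{T_{\widehat{X}}}(\widehat{X})-\mathcal{S}_{T_{\widehat{X}}}(\widehat{X}-X)$ using \eqref{K2} (valid since $\widehat{X}\in\mathcal{R}_=$) and the hypothesis $\widehat{X}-X\in\mathcal{S}_=$, and substitute the result back into \eqref{K}; every term on the right-hand side then cancels and one is left with $X-\mathcal{R}(X)=0$, so $X\in\mathcal{R}_=$.

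The only point that demands attention—and the closest thing to an obstacle—is matching each statement's hypotheses to the precise identity that is licensed by them, since \eqref{K1} presupposes $X\in\mathcal{R}_=$ whereas \eqref{K2} holds for an arbitrary $\widehat{X}\in\mathbb{H}_n$. Keeping this distinction straight is exactly what separates the three parts; the underlying computation in each case is a routine linear cancellation driven by the additivity of $\mathcal{S}_{T_{\widehat{X}}}$.
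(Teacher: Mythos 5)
Your proposal is correct and follows essentially the same route as the paper: all three parts are obtained by the same linear cancellations with the Stein operator $\mathcal{S}_{T_{\widehat{X}}}$, using \eqref{K1} together with \eqref{K2} (equivalently \eqref{K3}) for parts (i)--(ii) and the master identity \eqref{K} for part (iii), exactly as in the paper's argument. Your closing remark about which hypotheses license \eqref{K1} versus \eqref{K2} matches the paper's implicit bookkeeping.
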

\begin{proof}
With the aid of \eqref{K123}, the results are proven as follows.
\begin{enumerate}
  \item[(i)]
  From \eqref{K1} and \eqref{K3} we obtain
  \begin{align*}
          \mathcal{S}_{T_{\widehat{X}}}(\widehat{X}-X)&=\mathcal{S}_{T_{\widehat{X}}}(\widehat{X})-\mathcal{S}_{T_{\widehat{X}}}(X)\\
          &=H+K(\widehat{X},0)-\left[ H+K(\widehat{X},0)-K(\widehat{X},X) \right] = K(\widehat{X},X).
        \end{align*}
        That is, $\widehat{X}-X\in \mathcal{S}_=$.
  \item[(ii)]
  From \eqref{K1} we obtain
  \begin{align*}
          \mathcal{S}_{T_{\widehat{X}}}(\widehat{X})&=\mathcal{S}_{T_{\widehat{X}}}(X)+K(\widehat{X},X)\\
          &=H+K(\widehat{X},0)-K(\widehat{X},X)+K(\widehat{X},X)=H+K(\widehat{X},0)).
        \end{align*}
        That is, $\widehat{X}\in \mathcal{R}_=$.
  \item[(iii)]
  From \eqref{K3} and \eqref{K} we obtain
  \begin{align*}
          \mathcal{S}_{T_{\widehat{X}}}({X})&=\mathcal{S}_{T_{\widehat{X}}}(\widehat{X})-K(\widehat{X},X)\\
          &=H+K(\widehat{X},0)-K(\widehat{X},X)=\mathcal{S}_{T_{\widehat{X}}}({X})-\left[ X-\mathcal{R}(X) \right].
        \end{align*}
        That is, $X=\mathcal{R}(X)$ or ${X}\in \mathcal{R}_=$.
\end{enumerate}
\end{proof}

\subsection{Some useful facts and properties}
For any $M\in \mathbb{C}^{n\times n}$, the generalized eigenspace of $M$ corresponding an eigenvalue $\lambda$
is defined by $E_\lambda (M) = \mathrm{Ker}(M-\lambda I)^n$.
The following lemma characterizes the inheritance of (almost) stability property under the setting of a Stein inequality.
\begin{lemma}\label{lem2p2}
Let $B\in \mathbb{C}^{n\times n}$ and $Q\geq 0$. If $X_0$ is a positive semidefinite solution of the Stein inequality $\mathcal{S}_B(X)\geq Q$,
and $\mathrm{Ker}(Q)\subseteq \mathrm{Ker}(B-A)$ for some $A\in\mathbb{C}^{n\times n}$,
then $\rho(B)\leq \max\{1,\rho(A)\}$. Furthermore, we have
\begin{enumerate}
  \item[(i)] $\rho(B)\leq 1$ if $\rho(A)\leq 1$.
  \item[(ii)] $\rho(B)<1$ if $\rho(A)<1$ or $\mathrm{Ker}(Q)\cap E_\lambda(B)=\{0\}$ for some $\lambda\in\sigma(B)$.
\end{enumerate}
\end{lemma}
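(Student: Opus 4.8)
The plan is to reduce the whole statement to a single scalar inequality obtained by testing the Stein inequality $\mathcal{S}_B(X_0)\ge Q$ against an eigenvector of $B$. First I would fix $\lambda\in\sigma(B)$ with a corresponding eigenvector $v\neq 0$, so that $Bv=\lambda v$ and hence $v^H B^H X_0 B v=|\lambda|^2\, v^H X_0 v$. Evaluating $\mathcal{S}_B(X_0)\ge Q\ge 0$ at $v$ then gives
\[
(1-|\lambda|^2)\, v^H X_0 v \ge v^H Q v \ge 0,
\]
and this one inequality drives everything. From here I would split into two cases according to whether $v$ is ``seen'' by $X_0$. If $v^H X_0 v>0$, dividing forces $1-|\lambda|^2\ge 0$, i.e.\ $|\lambda|\le 1$. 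If $v^H X_0 v=0$, then $X_0\ge 0$ gives $X_0 v=0$ and the displayed inequality collapses to $0\ge v^H Q v\ge 0$, so $v^H Q v=0$ and therefore $Qv=0$ (again by $Q\ge 0$); the hypothesis $\mathrm{Ker}(Q)\subseteq\mathrm{Ker}(B-A)$ then yields $Av=Bv=\lambda v$, so $\lambda\in\sigma(A)$ and $|\lambda|\le\rho(A)$. In either case $|\lambda|\le\max\{1,\rho(A)\}$, and taking the maximum over $\sigma(B)$ proves the main claim. Statement (i) is then immediate, since $\rho(A)\le 1$ makes the right-hand side equal to $1$.

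For (ii) I would upgrade ``$\le$'' to ``$<$'' by contradiction. Assume $\rho(B)\ge 1$ and pick $\lambda\in\sigma(B)$ with $|\lambda|=\rho(B)\ge 1$ and eigenvector $v$. Revisiting the dichotomy, the branch $v^H X_0 v>0$ forces $|\lambda|\le 1$, hence $|\lambda|=1$, whereupon the displayed inequality again gives $v^H Q v=0$, so $Qv=0$ and $\lambda\in\sigma(A)$; the branch $v^H X_0 v=0$ also lands in $\lambda\in\sigma(A)$. Thus in both branches $v\in\mathrm{Ker}(Q)$ and $|\lambda|\le\rho(A)$. If $\rho(A)<1$ this contradicts $|\lambda|\ge 1$, settling the first alternative. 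For the second alternative the same computation shows that \emph{any} eigenvalue with $|\lambda|\ge 1$ produces a nonzero eigenvector lying in $\mathrm{Ker}(Q)\cap\mathrm{Ker}(B-\lambda I)\subseteq\mathrm{Ker}(Q)\cap E_\lambda(B)$, so triviality of this intersection precludes $|\lambda|\ge 1$ for that particular $\lambda$.

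The main obstacle is precisely the bookkeeping in the second alternative of (ii). The eigenvector argument only certifies $|\lambda|<1$ for the single $\lambda$ whose generalized eigenspace meets $\mathrm{Ker}(Q)$ trivially; a stable eigenvalue can perfectly well satisfy the kernel condition while an unstable one persists, so a single $\lambda$ does not by itself bound $\rho(B)$. The clean way to organize the proof is therefore to invoke (i) first (under $\rho(A)\le 1$) to confine $\sigma(B)$ to $\bar{\mathbb{D}}$, and then to use the kernel condition to eliminate \emph{every} remaining boundary eigenvalue $\lambda\in\sigma(B)\cap\partial\mathbb{D}$. I would pay close attention to the exact quantifier on $\lambda$ here. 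The appearance of the generalized eigenspace $E_\lambda(B)$ rather than $\mathrm{Ker}(B-\lambda I)$ makes the hypothesis stronger than my eigenvector argument strictly needs, which suggests the intended proof also tracks generalized eigenvectors; confirming via the iterated estimate $v^H X_0 v\ge\sum_{j=0}^{k-1}\|Q^{1/2}B^j v\|^2$ that no Jordan block can survive on the unit circle is the step I expect to require the most care.
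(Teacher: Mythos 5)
Your core argument is exactly the paper's proof: the paper also tests $\mathcal{S}_B(X_0)\ge Q$ against a vector associated with $\lambda\in\sigma(B)$, obtains $(1-|\lambda|^2)\,x^HX_0x\ge x^HQx\ge 0$, and runs the same dichotomy (phrased as $x\in\mathrm{Ker}(Q)$ versus $x\notin\mathrm{Ker}(Q)$ rather than your $v^HX_0v=0$ versus $v^HX_0v>0$; the two splits are interchangeable here, since $x\notin\mathrm{Ker}(Q)$ forces $x^HQx>0$ and hence both $x^HX_0x>0$ and $|\lambda|<1$). This yields the main bound, (i), and the first alternative of (ii) exactly as you describe, and your write-up of those parts is correct. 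One small point in your favour: the paper literally takes $x\in E_\lambda(B)$, i.e.\ a generalized eigenvector, for which the identity $x^HB^HX_0Bx=|\lambda|^2x^HX_0x$ need not hold; your restriction to genuine eigenvectors is the correct reading.

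The obstacle you flag in the second alternative of (ii) is real, and the paper does not resolve it: its proof ends immediately after the dichotomy, with no separate argument for that clause. As you observe, the eigenvector test only eliminates the particular $\lambda$ whose eigenspace meets $\mathrm{Ker}(Q)$ trivially, and the clause is false as literally stated: take $B=A=\mathrm{diag}(0,2)$ and $Q=X_0=\mathrm{diag}(1,0)$; then $\mathcal{S}_B(X_0)=Q\ge Q$, $\mathrm{Ker}(Q)\subseteq\mathrm{Ker}(B-A)=\mathbb{C}^2$, and $\mathrm{Ker}(Q)\cap E_0(B)=\{0\}$ for $0\in\sigma(B)$, yet $\rho(B)=2$. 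What the argument actually proves is the per-eigenvalue statement ($|\lambda|<1$ for each such $\lambda$), or the global conclusion under the extra hypothesis $\rho(A)\le 1$ with the kernel condition imposed on every unimodular $\lambda\in\sigma(B)$ --- which is your proposed repair, and is consistent with how the lemma is invoked later in the paper (always through the $\rho(A)<1$ branch). Finally, the Jordan-chain estimate you anticipate is unnecessary: since $\mathrm{Ker}(B-\lambda I)\subseteq E_\lambda(B)$, triviality of $\mathrm{Ker}(Q)\cap E_\lambda(B)$ already forces every genuine eigenvector for $\lambda$ out of $\mathrm{Ker}(Q)$, and the scalar test then gives $|\lambda|<1$ for that $\lambda$ without tracking generalized eigenvectors.
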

\begin{proof}
Let $\lambda\in\sigma(B)$ and a nonzero vector $x\in E_\lambda(B)$. Then
$x^H \mathcal{S}_B(X_0) x=(1-|\lambda|^2)(x^H X_0 x) \geq x^H Q x \geq 0$. If $x\not\in\mathrm{Ker}(Q)$, then $x\not\in\mathrm{Ker}(X_0)$ and thus $|\lambda|< 1$. Otherwise, $x\in \mathrm{Ker}(Q)$ implies that $Ax=Bx+(A-B)x=\lambda x$ and thus $|\lambda|\leq \rho(A)$. We complete the proof.
\end{proof}

According to the following theorem, the positive semidefiniteness of the operator $K(\cdot, \cdot)$ defined by \eqref{Req-b} might depend
on the positive definiteness of the matrix $R + B^H X B$ for any $X\in \mathcal{R}_\geq$.
\begin{theorem}\label{thm2p3}
Let $X\in \mathcal{R}_{\geq}$. Then $R+B^H X B>0$ and $K(\widehat{X},X)\geq 0$ for any $\widehat{X}\in \mathbb{H}_n$.

\end{theorem}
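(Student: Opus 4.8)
The plan is to prove the positive definiteness of $S:=R+B^HXB$ first, because the second assertion is then immediate. Indeed, by the definition in \cref{lem2p1}(ii) we have $K(\widehat{X},X)=(F_{\widehat{X}}-F_X)^H S\,(F_{\widehat{X}}-F_X)$, which is a congruence transform of $S$ and hence positive semidefinite for every $\widehat{X}\in\mathbb{H}_n$ (for which $F_{\widehat{X}}$ exists) as soon as $S>0$. So the whole theorem reduces to showing $S>0$, knowing only that $S$ is invertible (since $X\in\mathrm{dom}(\mathcal{R})$) and that $X\geq\mathcal{R}(X)$.

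My starting reformulation would be to set $W:=X-XBS^{-1}B^HX$, which is manifestly Hermitian and satisfies $\mathcal{R}(X)=A^HWA+H$ directly from \cref{dare-a} (equivalently $W=X(I+GX)^{-1}$). The Riccati inequality $X\geq\mathcal{R}(X)$ then becomes $X\geq A^HWA+H$, and since $H=C^HC\geq0$ this gives the cleaner inequality $X\geq A^HWA$.

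The core is an inertia comparison. Writing $\mathrm{In}_-(\cdot)$ for the number of negative eigenvalues of a Hermitian matrix, I would introduce the bordered matrix
\[
\Theta:=\begin{bmatrix} X & XB\\ B^HX & R+B^HXB\end{bmatrix}
\]
and compute its inertia in two ways. The unit-triangular congruence by $\begin{bmatrix} I & 0\\ -B^H & I\end{bmatrix}$ reduces $\Theta$ to $\mathrm{diag}(X,R)$, so that $\mathrm{In}_-(\Theta)=\mathrm{In}_-(X)$ because $R>0$; crucially this step never inverts $X$, which may be singular. On the other hand, the Schur complement of $\Theta$ with respect to the invertible block $S$ is exactly $W$, so Haynsworth additivity gives $\mathrm{In}_-(\Theta)=\mathrm{In}_-(S)+\mathrm{In}_-(W)$. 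Equating the two produces the identity $\mathrm{In}_-(X)=\mathrm{In}_-(S)+\mathrm{In}_-(W)$.

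To finish, I would invoke two standard monotonicity facts for the negative index: $\mathrm{In}_-(A^HWA)\leq\mathrm{In}_-(W)$ for any $A$, and (by Weyl's inequalities) $X\geq A^HWA$ implies $\mathrm{In}_-(X)\leq\mathrm{In}_-(A^HWA)$. Chaining these gives $\mathrm{In}_-(X)\leq\mathrm{In}_-(W)$, which inserted into the inertia identity forces $\mathrm{In}_-(S)\leq0$, hence $\mathrm{In}_-(S)=0$; as $S$ is invertible, $S>0$. The main obstacle I anticipate is precisely the choice of $\Theta$: the naive linear matrix inequality built from $X-\mathcal{R}(X)\geq0$ is congruent to $\mathrm{diag}(X-\mathcal{R}(X),\,S)$ and yields no sign information on $S$ by itself. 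The decisive idea is to route through $W$, so that the Riccati inequality bounds $\mathrm{In}_-(X)$ above by $\mathrm{In}_-(W)$ while the bordered identity bounds it below by $\mathrm{In}_-(S)+\mathrm{In}_-(W)$, squeezing $\mathrm{In}_-(S)$ to zero.
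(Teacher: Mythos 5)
Your proof is correct, but it follows a genuinely different route from the paper's. The paper starts from the equivalent inequality $X\geq H+T_X^H(X+XGX)T_X$, packages it into the semidefinite block inequality $\mathrm{diag}(R+B^HXB,\,X)\geq U^HXU$ with $U=[B\ \ T_X]$, and then argues by contradiction: given $x_1$ with $x_1^H(R+B^HXB)x_1<0$, it picks a unimodular $\lambda\notin\sigma(T_X)$, sets $x_2=(\lambda I-T_X)^{-1}Bx_1$ so that $Ux=\lambda x_2$, and the block inequality collapses to $x_1^H(R+B^HXB)x_1\geq 0$. Your argument instead runs an inertia count: the bordered matrix $\Theta$ is congruent to $\mathrm{diag}(X,R)$ on one hand and, by Haynsworth additivity over the invertible pivot $S=R+B^HXB$, has negative index $\mathrm{In}_-(S)+\mathrm{In}_-(W)$ on the other, where $W=X(I+GX)^{-1}$ is the Schur complement; the Riccati inequality in the form $X\geq A^HWA$ together with Weyl monotonicity and the elementary bound $\mathrm{In}_-(A^HWA)\leq\mathrm{In}_-(W)$ squeezes $\mathrm{In}_-(S)$ to zero. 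Both proofs discard $H\geq 0$ at the same point and both exploit invertibility of $S$ from $X\in\mathrm{dom}(\mathcal{R})$ to upgrade $S\geq 0$ to $S>0$. The paper's route is more elementary (no named inertia theorems, just one well-chosen test vector), whereas yours is more conceptual and yields the stronger intermediate identity $\mathrm{In}_-(X)=\mathrm{In}_-(S)+\mathrm{In}_-(W)$, which quantifies exactly how the signature of $X$ splits between $R+B^HXB$ and $X(I+GX)^{-1}$. The only small caveat, which you already flag, is that $K(\widehat{X},X)$ requires $\widehat{X}\in\mathrm{dom}(\mathcal{R})$ for $F_{\widehat{X}}$ to exist; the paper is equally loose on this point.
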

\begin{proof}
We first notice that $X\in \mathcal{R}_{\geq}$ is equivalent to $X\geq H+T_X^H(X+XGX)T_X$. Let $R=\widehat{R}\widehat{R}^H$ for some $\widehat{R}>0$, e.g., the Cholesky decomposition of $R$ , and $\widehat{B}:=\widehat{R}^{-H}B$. Then $G=\widehat{B}\widehat{B}^H$ and
a direct computation yields
\begin{align*}
  \begin{bmatrix} R & -B^H X T_X\\-T_X^H X B^H & X-T_X^H X T_X\end{bmatrix} &\geq \begin{bmatrix} R & -B^H X T_X\\-T_X^H X B^H & T_X^H XGX T_X\end{bmatrix} \\
  &=\begin{bmatrix} \widehat{R} \\ -T_X^H X\widehat{B}\end{bmatrix} \begin{bmatrix} \widehat{R}^H & -\widehat{B}^H X T_X \end{bmatrix} \geq 0.
\end{align*}
Therefore, we see that
\begin{align} \label{blk-ineq}
  \begin{bmatrix} R+B^H X B & 0\\0 & X\end{bmatrix} &\geq \begin{bmatrix} B^H XB  & B^H X T_X\\T_X^H X B & T_X^H X T_X\end{bmatrix}
  = U^H X U,
\end{align}
where $U := \begin{bmatrix} B & T_X \end{bmatrix}\in \mathbb{C}^{n\times (m+n)}$.
Next, we shall deduce that $R+B^H XB \geq 0$. If not, there is a nonzero $x_1\in \mathbb{C}^m$ such that $x_1^H (R+B^H XB)x_1 < 0$.
Choose a scalar $\lambda\not \in \sigma(T_X)$ with $|\lambda| = 1$, and let $x_2 := (\lambda I - T_X)^{-1}Bx_1$ and
$x:= \begin{bmatrix} x_1 \\ x_2 \end{bmatrix}$, respectively. It is easily seen that $Ux = \lambda x_2$
and hence, from \eqref{blk-ineq}, we immediately obtain
\[  x_1^H (R+B^HXB)x_1 + x_2^H Xx_2 \geq |\lambda|^2 (x_2^H Xx_2) = x_2^H X x_2. \]
Then $x_1^H (R+B^HXB)x_1\geq 0$, contradicting to the assumption of $x_1$.
Thus, we conclude that $R+B^HXB\geq 0$ is nonsingular or $R+B^H X B>0$ for any $X\in \mathcal{R}_\geq$. The remaining part immediately follows from the definition of the operator $K(.,.)$ in Lemma~\ref{lem2p1}.
\end{proof}
\begin{remark}\label{rem2p4}
From \cref{thm2p3}, we see that $I+GX=I+BR^{-1}B^HX$ is nonsingular if $X\in \mathcal{R}_\geq$ (since $I+R^{-1}B^HXB$ is nonsingular).
\end{remark}

Based on the following lemma, the uniqueness of the almost stabilizing solution to the DARE \eqref{dare} will be established
in \cref{thm3p3} below.
\begin{lemma}\label{lem2p5}
Let $\rho(A)\leq 1$ and $Q_1\geq 0$. If $X_0\in\mathbb{H}_n$ is a solution of the matrix inequality
$\mathcal{S}_A(X)\geq (X A)^HQ_1(X A)$, and the generalized eigenspaces of $A^H$ satisfy
\begin{align}\label{ass}
 E_{\bar{\lambda}}(A^H)\cap \mathrm{Ker}(Q_1)=\{0\}
\end{align}
for all $\lambda\in\sigma(A)$ with $|\lambda|=1$, then $X_0 \geq 0$.
\end{lemma}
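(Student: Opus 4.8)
The plan is to show $Y:=X_0\geq 0$ by splitting $\mathbb{C}^n$ according to the spectrum of $A$, proving positivity on the strictly stable part directly and neutralizing the unimodular part through assumption \eqref{ass}. First I observe that the hypothesis together with $Q_1\geq 0$ gives $\mathcal{S}_A(Y)=Y-A^HYA\geq (YA)^HQ_1(YA)\geq 0$, so in particular $Y\geq A^HYA$. Conjugating this Stein inequality by powers of $A$ (a congruence, hence order preserving) yields the monotone chain $Y\geq (A^k)^HYA^k$ for every $k\geq 0$. Since $\rho(A)\leq 1$, the spectrum of $A$ consists only of unimodular eigenvalues and eigenvalues strictly inside $\mathbb{D}$, so $\mathbb{C}^n=W\oplus W'$ with $W:=\bigoplus_{|\lambda|=1}E_\lambda(A)$ and $W':=\bigoplus_{|\lambda|<1}E_\lambda(A)$, both $A$-invariant and with $\rho(A|_{W'})<1$.

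The heart of the argument is to show that $Y$ annihilates $W$, i.e. $Yu=0$ for every $u\in W$, which I would establish one Jordan chain at a time. For a genuine eigenvector $u$ with $Au=\lambda u$, $|\lambda|=1$, the quadratic form $u^H\mathcal{S}_A(Y)u=(1-|\lambda|^2)u^HYu$ vanishes; since $\mathcal{S}_A(Y)\geq 0$, a zero of its quadratic form is a zero of the operator, so $\mathcal{S}_A(Y)u=0$. This is the crucial bridge: it rewrites as $Yu=\lambda A^HYu$, i.e. $A^H(Yu)=\bar\lambda(Yu)$, placing $Yu$ in $E_{\bar\lambda}(A^H)$. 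Simultaneously, $0=u^H\mathcal{S}_A(Y)u\geq (YAu)^HQ_1(YAu)=|\lambda|^2(Yu)^HQ_1(Yu)$ forces $Yu\in\mathrm{Ker}(Q_1)$, so assumption \eqref{ass} yields $Yu=0$. For a higher chain vector $u_j$ with $Au_j=\lambda u_j+u_{j-1}$, I would argue inductively: knowing $Yu_{j-1}=0$, all cross terms drop out and the same computation gives $u_j^H\mathcal{S}_A(Y)u_j=(1-|\lambda|^2)u_j^HYu_j=0$, hence $\mathcal{S}_A(Y)u_j=0$ and $YAu_j=\lambda Yu_j$, so again $Yu_j\in E_{\bar\lambda}(A^H)\cap\mathrm{Ker}(Q_1)=\{0\}$. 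Running this induction over every chain and every unimodular eigenvalue shows $YW=\{0\}$.

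With $YW=\{0\}$ in hand, the conclusion follows by reduction to $W'$. Writing any $z=w+w'$ with $w\in W$, $w'\in W'$ and using $Yw=0$ (so $w^HY=0$ as well by Hermiticity), every mixed term vanishes and $z^HYz=w'^HYw'$; thus it suffices to prove $w'^HYw'\geq 0$. Here I invoke the monotone chain from the first paragraph: sandwiching $Y\geq (A^N)^HYA^N$ at $w'$ gives $w'^HYw'\geq (A^Nw')^HY(A^Nw')$, and since $w'\in W'$ and $\rho(A|_{W'})<1$ we have $A^Nw'\to 0$, so the right-hand side tends to $0$ and $w'^HYw'\geq 0$. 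Hence $Y=X_0\geq 0$. (When $\rho(A)<1$ the spectrum has no unimodular part, $W=\{0\}$, and this last step alone already gives the claim.)

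The main obstacle I anticipate is the treatment of non-semisimple unimodular eigenvalues: the clean eigenvector computation must be propagated up each Jordan chain, and one has to check carefully that the cross terms involving $Yu_i$ for $i<j$ all vanish so that the quadratic form collapses exactly to $(1-|\lambda|^2)u_j^HYu_j$. The conceptual key—turning assumption \eqref{ass}, which constrains $A^H$, into something usable by a computation that naturally produces right (generalized) eigenvectors of $A$—is the observation that $\mathcal{S}_A(Y)\geq 0$ upgrades the scalar identity $u^H\mathcal{S}_A(Y)u=0$ to the vector identity $\mathcal{S}_A(Y)u=0$, which is precisely what converts $Au=\lambda u$ into $A^H(Yu)=\bar\lambda(Yu)$.
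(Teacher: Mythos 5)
Your proof is correct and follows essentially the same route as the paper's: both hinge on upgrading the vanishing quadratic form $u^H\mathcal{S}_A(X_0)u=0$ at a unimodular eigenvector to the vector identity $\mathcal{S}_A(X_0)u=0$, which places $X_0u$ in $E_{\bar{\lambda}}(A^H)\cap\mathrm{Ker}(Q_1)=\{0\}$, and then propagating this up each Jordan chain. The only cosmetic difference is the finish: the paper passes to the Jordan canonical form and applies the order-preserving inverse Stein operator on the stable block, whereas you stay in the original basis and let the iterated inequality $X_0\geq (A^N)^H X_0 A^N$ together with $A^Nw'\to 0$ do the same job.
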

\begin{proof}
For the sake of convenience, we factorize $Q_1$ as the Cholesky decomposition  $Q_1=Q_2 Q_2^H$. Note that $\mathrm{Ker}(Q_1)=\mathrm{Ker}(Q_2^H)$.
Let ${\lambda}\in\sigma(A)$ with $|\lambda|=1$ and $Av_1 = \lambda v_1$ for some nonzero $v_1\in \mathbb{C}^n$. Then we see that
$$0=v_1^H \mathcal{S}_A(X_0)v_1=(Q_2^H X_0v_1)^H(Q_2^H X_0v_1)=(v_1^H X_0 Q_2)(v_1^H X_0 Q_2)^H, $$
and this implies that
\[
  0=v_1^H A^H X_0Q_2 =\bar{\lambda}v_1^HX_0 Q_2, \quad
  0=v_1^H \mathcal{S}_A(X_0) =\bar{\lambda}v_1^H X_0( \lambda I-A).
\]
Thus, it follows from the assumption \eqref{ass} that $X_0v_1=0$.
Let $(A-\lambda I)v_2=v_1$. Observe that the quadratic form
$v_2^H \mathcal{S}_A(X_0)v_2$ again satisfies
\[
v_2^H(X_0-A^HX_0A)v_2=-(\lambda v_1^H X_0 v_2+\bar{\lambda}v_2^H X_0 v_1+v_1^H X_0 v_1)=0,
\]
and we then have
\[
  0=v_2^H AX_0 Q_2 =\bar{\lambda}v_2^H X_0Q_2,\quad
  0=v_2^H \mathcal{S}_A(X_0)=\bar{\lambda}v_2^H X_0(\lambda I-A).
\]
Therefore, $X_0v_2=0$ by using the assumption \eqref{ass}. Inductively, we can show that $X_0 v_j=0$ for a Jordan chain $\{v_j\}$ corresponding to the unimodular eigenvalue $\lambda$ of $A$.

{Let $J_{A}=P^{-1}A P$ be the Jordan canonical form of $A$. Suppose that $J_{A}= J_1 \oplus J_s$, where $J_1\in\mathbb{C}^{m_1\times m_1}$ with $\sigma(J_1)\subseteq \partial\mathbb{D}$ and $J_s\in\mathbb{C}^{m_2\times m_2}$ with $\sigma (J_s) \subseteq \mathbb{D}$}.
If we let $Y_0:= \mathcal{S}_{A}({X}_0)$, then $Y_0\geq (X_0 A)^HQ_1(X_0A) \geq 0$ and one deduces that
\[
\mathcal{S}_{A}(\widehat{X}_0)=\widehat{X}_0-J_A^H\widehat{X}_0 J_A=\widehat{Y}_0,
\]
where $\widehat{X}_0 := P^H X_0 P$ and $\widehat{Y}_0 := P^H Y_0 P\geq 0$.
From the above discussion we have $\widehat{X}_0= 0_{m_1}\oplus \widehat{X}_{0,s}$, $\widehat{Y}_0= 0_{m_1}\oplus \widehat{Y}_{0,s}$ and $\widehat{X}_{0,s}\in\mathbb{H}_{m_2}$ satisfies the Stein equation $\mathcal{S}_{J_s}(\widehat{X}_{0,s})=\widehat{Y}_{0,s}\geq 0$.
Therefore, $X_0=P^{-H}(0_{m_1}\oplus \widehat{X}_{0,s})P^{-1}\geq 0$,
since the matrix $\widehat{X}_{0,s}=\mathcal{S}_{J_s}^{-1}(\widehat{Y}_{0,s})$ is positive semidefinite if $\rho(J_s)<1$.
%
\end{proof}

In \cref{thm3p4}, the dual DARE of second kind will possess a positive definite stabilizing solution
if the sufficient conditions of the following lemma are fulfilled, in which
$\mathbb{D}^c = \{z\in\mathbb{C}\, |\, |z| \geq 1 \}$ is the complement of the open unit disk $\mathbb{D}$.
\begin{lemma}\label{lem2p6}
Assume that $X_0$ is a positive semidefinite solution of the Stein inequality
$ \mathcal{S}_A(X)\geq BB^H$ with $B\in\mathbb{C}^{n\times m}$. Then
\begin{enumerate}
  \item[(i)] $\rho(A)<1$ if $\mathrm{Ker}(A-\lambda I)\cap \mathrm{Ker}(B^H)=\{0\}$ for $\lambda\in\sigma(A)\cap\mathbb{D}^c$ or, equivalently,
   $\mathrm{rank}\begin{bmatrix} B^H \\ A-\lambda I \end{bmatrix}=n$ for $\lambda\in\sigma(A)\cap\mathbb{D}^c$.
  \item[(ii)] $X_0>0$ if $\mathrm{Ker}(A-\lambda I)\cap \mathrm{Ker}(B^H)=\{0\}$ for $\lambda\in\sigma(A)$ or, equivalently,
  $\mathrm{rank}\begin{bmatrix} B^H \\ A-\lambda I \end{bmatrix}=n$ for $\lambda\in\sigma(A)$.
\end{enumerate}
\end{lemma}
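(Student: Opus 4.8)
The plan is to handle the two parts in order, with part (i) supplying the spectral bound that makes part (ii) work. Throughout I would exploit only the two facts that $X_0\geq 0$ and $\mathcal{S}_A(X_0)\geq BB^H\geq 0$, together with the recalled formula $\mathcal{S}_A^{-1}(Y)=\sum_{k=0}^\infty (A^k)^H Y A^k$ valid when $\rho(A)<1$.

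For part (i), I would argue by contradiction at the level of honest eigenvectors. Suppose $\lambda\in\sigma(A)$ with $|\lambda|\geq 1$ and choose $v\neq 0$ with $Av=\lambda v$. Testing the Stein inequality against $v$ gives, on the one hand, $v^H\mathcal{S}_A(X_0)v=(1-|\lambda|^2)\,v^H X_0 v$, and on the other hand $v^H\mathcal{S}_A(X_0)v\geq v^H BB^H v=\|B^H v\|^2\geq 0$. Since $X_0\geq 0$ and $|\lambda|\geq 1$ force the left-hand quantity to be $\leq 0$, both sides must vanish; in particular $B^H v=0$, so $v\in\mathrm{Ker}(A-\lambda I)\cap\mathrm{Ker}(B^H)$. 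This contradicts the hypothesis, which is equivalent to the rank condition because the stacked matrix $\begin{bmatrix} B^H\\ A-\lambda I\end{bmatrix}$ has trivial kernel exactly when it has full column rank $n$. Hence no eigenvalue lies in $\mathbb{D}^c$, i.e.\ $\rho(A)<1$. I would emphasize that this step needs only eigenvectors, not Jordan chains, so it is considerably shorter than the argument in \cref{lem2p5}.

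For part (ii), I would first note that its hypothesis (the condition over all $\lambda\in\sigma(A)$) contains the hypothesis of part (i) because $\sigma(A)\cap\mathbb{D}^c\subseteq\sigma(A)$; therefore $\rho(A)<1$ and $\mathcal{S}_A$ is invertible with an order-preserving inverse. Applying $\mathcal{S}_A^{-1}$ to $\mathcal{S}_A(X_0)\geq BB^H$ yields $X_0\geq P$, where $P:=\sum_{k=0}^\infty (A^k)^H BB^H A^k\geq 0$ is the associated observability Gramian. Since $X_0\geq P>0$ would immediately give $X_0>0$, it suffices to prove $P>0$, i.e.\ $\mathrm{Ker}(P)=\{0\}$.

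To finish I would run a standard Hautus/PBH argument. Because $x^H P x=\sum_{k\geq 0}\|B^H A^k x\|^2$, the kernel of $P$ coincides with the unobservable subspace $N:=\bigcap_{k\geq 0}\mathrm{Ker}(B^H A^k)$, which is $A$-invariant and contained in $\mathrm{Ker}(B^H)$. If $N\neq\{0\}$, then $A|_N$ possesses an eigenpair $(\lambda,x)$ with $0\neq x\in N$, whence $x\in\mathrm{Ker}(A-\lambda I)\cap\mathrm{Ker}(B^H)$ with $\lambda\in\sigma(A)$, contradicting the hypothesis; thus $N=\{0\}$, $P>0$, and $X_0>0$. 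The main obstacle here is not computational but structural: part (ii) must first invoke part (i) to secure $\rho(A)<1$ before the Gramian series and the order-preserving inverse are even defined, and the positivity of $P$ rests on the key fact that any nonzero $A$-invariant subspace contains an eigenvector of $A$, which is precisely what converts the ``annihilation for all $k$'' condition into the single-frequency kernel condition appearing in the hypothesis.
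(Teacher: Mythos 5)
Your proof is correct. Part (i) coincides with the paper's own argument: test the Stein inequality against an eigenvector belonging to an eigenvalue in $\mathbb{D}^c$, squeeze $(1-|\lambda|^2)v^HX_0v$ between $0$ and $\|B^Hv\|^2$, and contradict the kernel hypothesis. Part (ii), however, takes a genuinely different route. The paper never invokes part (i): it fixes $\lambda\in\sigma(A)$ and proves by induction along Jordan chains that $\mathrm{Ker}(A-\lambda I)^{\ell}\cap\mathrm{Ker}(X_0)=\{0\}$ for every $\ell$, using at each step that $v_{k+1}^H\mathcal{S}_A(X_0)v_{k+1}=-v_k^HX_0v_k$ when $v_k=(A-\lambda I)v_{k+1}$ and $X_0v_k=0$ is forced, and then concludes $\mathrm{Ker}(X_0)=\{0\}$ from the generalized-eigenspace decomposition of $\mathbb{C}^n$. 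You instead observe that the hypothesis of (ii) subsumes that of (i), secure $\rho(A)<1$, push the inequality through the order-preserving inverse $\mathcal{S}_A^{-1}$ to obtain $X_0\geq P$ with $P$ the observability Gramian of the pair $(A,B^H)$, and annihilate $\mathrm{Ker}(P)$ by the PBH argument, correctly identifying the crux as the fact that a nonzero $A$-invariant subspace contains an eigenvector. Your route is shorter and avoids Jordan chains entirely, at the price of making (ii) logically dependent on (i) and on the convergence of the Gramian series; the paper's chain-by-chain induction is self-contained and delivers $E_\lambda(A)\cap\mathrm{Ker}(X_0)=\{0\}$ eigenvalue by eigenvalue without any spectral-radius bound (its final passage from that statement to $\mathrm{Ker}(X_0)=\{0\}$ tacitly rests on the $A$-invariance of $\mathrm{Ker}(X_0)$, which is the same structural fact your unobservable-subspace argument makes explicit). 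Both arguments are sound; for this particular statement, whose hypotheses do force $\rho(A)<1$ anyway, yours is arguably the cleaner one.
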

\begin{proof}
Under slightly different conditions, the results are derived as follows.
\begin{enumerate}
  \item[(i)] Let $u\in\mathrm{Ker}(A-\lambda I)$ with $u\neq 0$ for $\lambda\in\sigma(A)\cap\mathbb{D}^c$. Then
\begin{align*}
  0&\geq (1-|\lambda|^2)u^H X_0 u\geq (B^H u)^H (B^H u) \geq 0.
\end{align*}
From the assumption we conclude that $u\in\mathrm{Ker}(A-\lambda I)\cap \mathrm{Ker}(B^H)=\{0\}$, which leads to a contradiction with $u\neq 0$.
Thus, $|\lambda|<1$ for all $\lambda\in\sigma(A)$ or, equivalently, $\rho(A)<1$.
\item[(ii)] Let $\lambda\in\sigma(A)$. For any $v_1\in\mathrm{Ker}(A-\lambda I)\cap \mathrm{Ker}(X_0)$, we see that
\begin{equation*}
   0 = (1-|\lambda|^2)v_1^H X_0 v_1\geq (B^H v_1)^H (B^H v_1) \geq 0.
\end{equation*}
Then $v_1\in\mathrm{Ker}(A-\lambda I)\cap \mathrm{Ker}(B^H)=\{0\}$ and thus $v_1=0$.
This implies that $\mathrm{Ker}(A-\lambda I)\cap \mathrm{Ker}(X_0)=\{0\}$.
Suppose that $\mathrm{Ker}(A-\lambda I)^i \cap \mathrm{Ker}(X_0)=\{0\}$ for $1\leq i\leq k$ and $k\in \mathbb{N}$.
For any $v_{k+1}\in \mathrm{Ker}(A-\lambda I)^{k+1} \cap \mathrm{Ker}(X_0)$, let $v_k:= (A-\lambda I)v_{k+1}$ or $Av_{k+1}=v_k+\lambda v_{k+1}$.
Thus, we further obtain
 \begin{align*}
  0&\geq -v_k^H X_0 v_k=v_{k+1}^H X_0 v_{k+1}-(v_k^H+\bar{\lambda}v_{k+1}^H)X_0(v_k+\lambda v_{k+1}) \\
  &=v_{k+1}^H \mathcal{S}_A(X_0)v_{k+1} \geq (B^H v_{k+1})^H (B^H v_{k+1}) \geq 0.
\end{align*}
This leads to $X_0v_k=0$ and hence $v_k\in \mathrm{Ker}(A-\lambda I)^k \cap \mathrm{Ker}(X_0)=\{0\}$ or $v_k=0$.
Then $v_{k+1}\in\mathrm{Ker}(A-\lambda I)\cap \mathrm{Ker}(B^H)=\{0\}$ or $v_{k+1} = 0$.
That is, $\mathrm{Ker}(A-\lambda I)^{k+1}\cap \mathrm{Ker}(X_0)=\{0\}$. It follows from mathematical induction that
$\mathrm{Ker}(A-\lambda I)^\ell \cap \mathrm{Ker}(X_0)=\{0\}$ for each $\ell \in\mathbb{N}$.
Therefore, $\mathrm{Ker}(X_0)=\mathrm{Ker}(X_0)\cap \mathbb{C}^n
=\mathrm{Ker}(X_0)\cap \Big( \bigoplus\limits_{\lambda\in\sigma(A)}E_{\lambda}(A) \Big)=\{0\}$
or, equivalently, $X_0\geq 0$ is nonsingular.
\end{enumerate}
\end{proof}
\subsection{The formulation of the dual DARE} \label{sec3p2}
Let $A$ be nonsingular through this
subsection. The positive semidefinite solutions of dual DARE play a central role in the set of negative semidefinite solutions of the original DARE. For the DARE \eqref{dare-b} of the compact form, we shall provide the construction of two kinds of dual DARE \eqref{dual} and \eqref{DY}, in terms of the coefficient matrices $A$, $G$ and $H$, in this subsection.
\subsubsection{The first kind of dual DARE}
For the sake of simplicity the matrix products $A^{-H}X A^{-1}$ is denoted by $X^{(A)}$ for any $X\in\mathbb{C}^{n\times n}$. Note that $I+GH^{(A)}$ is nonsingular since $G,\,H^{(A)}\geq 0$. Then Eq.~\eqref{dare-b} is equivalent to
     \begin{align*}
   (I+GX)^{-1}&=I-G(X-H)^{(A)}=I+GH^{(A)}-GX^{(A)}\\
   &=(I+GH^{(A)})A(I-A^{-1}(I+GH^{(A)})^{-1}GA^{-H}X)A^{-1}\\
   &=\widehat{A}^{-1}(I-\widehat{G}X)A^{-1}.
     \end{align*}
     We immediately obtain $I-\widehat{G}X$ is nonsingular and
     \begin{align}\label{pd}
       [(I+GX)^{-1}A] \times  [(I+\widehat{G}Y)^{-1}\widehat{A}]=I
     \end{align}
       , where $Y=-X$, $\widehat{A}=A^{-1}(I+H^{(A)})^{-1}$ and $\widehat{G}=((I+GH^{(A)})^{-1}G)^{(A^H)}=\widehat{A}GA^{-H}$.
   Eq.~\eqref{pd} provides the formulation of the first kind of dual DARE
   \begin{align}\label{dual}
   Y=\mathcal{D}_1(Y):=\widehat{H}+\widehat{A}^H Y(I+\widehat{G}Y)^{-1}\widehat{A},
   \end{align}
   where
   \begin{align*}
     \widehat{H} &= -X+\widehat{A}^H X(I-\widehat{G}X)^{-1}\widehat{A}=-X+\widehat{A}^H XA^{-1}(I+GX)\\
     &=-X+\widehat{A}^H H A^{-1}(I+GX)+(I+H^{(A)}G)^{-1} X\\
     &=\widehat{A}^H H A^{-1}+\Big[-I+\widehat{A}^H H A^{-1}G+(I+H^{(A)}G)^{-1}\Big]X \\
     &=\widehat{A}^H H A^{-1}+\Big[-I+(I+H^{(A)}G)^{-1}H^{(A)}G+(I+H^{(A)}G)^{-1}\Big]X \\
     &=\widehat{A}^H H A^{-1}.
    \end{align*}
Notice that if we write $G = BR^{-1}B^H$, then it is easily seen that
\begin{subequations}\label{dual3}
\begin{align}
  \widehat{A} &= A^{-1} - \widehat{B} \widehat{R}^{-1} B^H H^{(A)}, \\
   \widehat{G} &= \widehat{B}\widehat{R}^{-1} \widehat{B}^H\geq 0, \\
  \widehat{H} &= H^{(A)} - B^H H^{(A)}\widehat{R}^{-1} H^{(A)} B,
\end{align}
\end{subequations}
where $\widehat{B} = A^{-1}B$ and
   $\widehat{R}= R + B^H  H^{(A)} B>0$. \eqref{dual3} are the same as the coefficient matrices defined in \cite[Theorem 3.1]{i96}. More precisely, we have
\[
\widehat{A}=\widetilde{A}-\widetilde{B}\widetilde{R}^{-1}\widetilde{C},\widehat{H}=\widetilde{H}-\widetilde{C}^H\widetilde{R}^{-1}\widetilde{C}.
\]
Finally, it is straightforward to verify
 \begin{align}\label{dual1}
 (\mathcal{R}(X)-X)^{(A)}=(\mathcal{D}_1(Y)-Y)(I+GH^{(A)}),
 \end{align}
 i.e., $Y=-X$ is a solution of dual DARE~\eqref{dual} if and only if $X$ is a solution of DARE~\eqref{dare-b}.  Notice that from \eqref{pd} we have
\begin{align*}
\sigma((I+\widehat{G}Y)^{-1}\widehat{A})=\sigma(T_X^{-1}).
\end{align*}
\subsubsection{The second kind of dual DARE} \label{sec3p3}
Let $X$ be a Hermitian solution of the DARE \eqref{dare-b},
then $X-H$ is nonsingular and
\begin{equation} \label{eyeX}
\left[ X(I+GX)^{-1}A\right] \times \left[ (X-H)^{-1} A^H \right] = I.
\end{equation}
Furthermore, if $X$ is nonsingular, it follows from \eqref{eyeX} that $Y:= -X^{-1}$ satisfies
\begin{equation} \label{invCLM}
  \left[ X(I+GX)^{-1}A \right]\times \left[Y (I+HY)^{-1}A^H\right]=-I
\end{equation}
or, equivalently, $\left[Y (I+HY)^{-1}A^H\right]\times\left[ X(I+GX)^{-1}A \right]=-I$.
It thus implies that $Y$ is a Hermitian solution to the second kind of dual DARE
\begin{equation} \label{DY}
  Y =\mathcal{D}_2(Y):= A Y (I + H Y)^{-1} A^H + G.
\end{equation}
In addition, it is easily seen that the Hermitian matrices $X$ and $Y$ also fulfill
\begin{subequations}\label{dual2}
\begin{align}
  Y - \mathcal{D}_2(Y) &= A\left[ (X-H)^{-1} - (\mathcal{R}(X)-H)^{-1} \right] A^H \\
  X - \mathcal{R}(X) &= A^H \left[ (Y-G)^{-1} - (\mathcal{D}_2
  (Y)-G)^{-1} \right] A,
\end{align}
\end{subequations}

i.e., $Y=-X^{-1}$ is a nonsingular solution of dual DARE~\eqref{DY} if and only if $X=-Y^{-1}$ is a nonsingular solution of DARE~\eqref{dare-b}. Notice that from \eqref{invCLM} we have
\begin{align}\label{dualcloseloop}
\sigma(((I+HY)^{-1}A^H))=\sigma(X T_X^{-1} X^{-1})=\sigma(T_X^{-1}).
\end{align}
Analogously, let
$\mbox{dom}(\mathcal{D}_1):=\{X\in\mathbb{H}_n\, |\, \det(I+\widehat{G}X)\neq 0\}$ , $\mbox{dom}(\mathcal{D}_2):=\{X\in\mathbb{H}_n\, |\, \det(I+HX)\neq 0\}$
and $\mathcal{D}_\geq^{(i)} := \{X\in \mbox{dom}(\mathcal{D}_i)\, |\, X \geq \mathcal{D}_i(X) \}$,
with $i=1,2$  for the sake of explanation.

\section{Extremal solutions of the DARE} \label{sec3}
In this section, the existence of extremal solutions to the DARE \eqref{dare}
will be established iteratively through the FPI given by
\begin{equation} \label{fpi-psd}
X_{k+1} = \mathcal{R}(X_k), \quad k\geq 0,
\end{equation}
with a suitable initial matrix $X_0\in \mathbb{N}_n$.

According to the formulations~\eqref{dual1} and \eqref{dual2}, when $A$ is nonsingular, it is shown that $X\in \mathbb{H}_n$ is a solution of the DARE \eqref{dare}
if and only if $-X\in \mathbb{H}_n$ is a solution of its dual DARE of first kind \eqref{dual1},
and $X\in \mathbb{H}_n$ is a nonsingular solution of the DARE \eqref{dare}
if and only if $-X^{-1}\in \mathbb{H}_n$ is a solution of its dual DARE \eqref{DY} of second kind.
 With this reason, we thus consider two FPIs defined by
\begin{subequations}\label{fpi-nsd}
 \begin{align}
  Y_{k+1} &= \mathcal{D}_1 (Y_k),\quad k\geq 0,\label{fpi-nsd1}\\
  Z_{k+1} &= \mathcal{D}_2 (Z_k),\quad k\geq 0,\label{fpi-nsd2}
\end{align}
\end{subequations}
with $Y_0,Z_0\in \mathbb{N}_n$ being the initial guesses.

For the sake of explanation, $X_{+,M}$, $X_{+,m}$, $X_{-,M}$ and $X_{-,m}$ denote the maximal positive semidefinite solution, minimal positive semidefinite solution, maximal negative semidefinite solution and minimal negative semidefinite solution of DARE, respectively, if they exist. For the sake of clarity, posive semidefinite and negative semidefinite extremal solutions of the DARE~\eqref{dare} will be discussed separately in the following subsections.

\subsection{Positive semidefinite extremal solutions} \label{sec3p1}
The following theorem, quoted from \cite[Theorem 3.2]{chiang2021}, guarantees the existence of the minimal solution $X_{+,m} \in \mathbb{N}_n$ to the DARE \eqref{dare-b},
i.e., $X_{+,m} \leq S$ for all $S\in \mathcal{R}_\geq\cap \mathbb{N}_n$, under reasonable assumptions.
\begin{theorem}\cite{chiang2021}\label{thm3p1}
  If $\mathcal{R}_{\geq}\cap\mathbb{N}_n \neq \emptyset$, then the FPI \eqref{fpi-psd}
  generates a nondecreasing sequence of positive semidefinite matrices $\{X_k\}_{k=0}^\infty$ with $0 \leq X_0 \leq H$,
  which converges at least R-linearly to the minimal element $X_{+,m}$ of the solution set  $\mathcal{R}_\geq\cap \mathbb{N}_n$
  with the rate of convergence
  \[  \limsup_{k\rightarrow \infty} \sqrt[k]{\|X_k - X_{+,m}\|}\leq \rho_{\mathbb{D}} (T_{X_{+,m}})^2 < 1. \]
\end{theorem}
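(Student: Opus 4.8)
The plan is to separate the two assertions: monotone convergence of the FPI \eqref{fpi-psd} to the minimal element $X_{+,m}$, and the R-linear rate $\rho_{\mathbb{D}}(T_{X_{+,m}})^2$.

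For the convergence, I would start the iteration at $X_0=0$, so that $0\leq X_0\leq H$ and $X_1=\mathcal{R}(0)=H\geq 0=X_0$. Every iterate is positive semidefinite and hence lies in $\mbox{dom}(\mathcal{R})$, because $R+B^HX_kB\geq R>0$; combined with the order-preserving property of $\mathcal{R}$ recalled in the introduction, an induction yields $X_k\leq X_{k+1}$. To bound the sequence from above I would fix any $S\in\mathcal{R}_\geq\cap\mathbb{N}_n$, nonempty by hypothesis, and propagate $X_k\leq S\Rightarrow X_{k+1}=\mathcal{R}(X_k)\leq\mathcal{R}(S)\leq S$ starting from $X_0=0\leq S$. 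Thus $\{X_k\}$ is nondecreasing and bounded above by every element of $\mathcal{R}_\geq\cap\mathbb{N}_n$, so it converges (via monotone convergence of $v^HX_kv$ and polarization) to some $X_*\geq 0$. Passing to the limit in $X_{k+1}=\mathcal{R}(X_k)$, which is legitimate since $\mathcal{R}$ is continuous at $X_*\in\mbox{dom}(\mathcal{R})$, gives $X_*=\mathcal{R}(X_*)$; being an element of $\mathcal{R}_\geq\cap\mathbb{N}_n$ dominated by every other such element, $X_*$ is exactly the minimal solution $X_{+,m}$.

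For the rate, the key is an exact error recursion. Set $E_k:=X_*-X_k\geq 0$ and $T_*:=T_{X_*}$. Taking $\widehat{X}=X_*$ in \eqref{Req-b} and eliminating $H_{X_*}$ via $\mathcal{S}_{T_*}(X_*)=H_{X_*}$ (this is \eqref{K3}), I obtain $X-\mathcal{R}(X)=\mathcal{S}_{T_*}(X-X_*)+K(X_*,X)$ for admissible $X$; evaluating at $X=X_k$ and using the linearity of $\mathcal{S}_{T_*}$ turns the left-hand side $X_k-X_{k+1}=E_{k+1}-E_k$ into
\[
E_{k+1}=T_*^HE_kT_*+K(X_*,X_k).
\]
Here $K(X_*,X_k)=(F_{X_*}-F_{X_k})^H(R+B^HX_kB)(F_{X_*}-F_{X_k})\geq 0$ because $R+B^HX_kB>0$ (cf.\ \cref{thm2p3} and \cref{rem2p4}), and $K(X_*,X_k)=O(\|E_k\|^2)$ since $X\mapsto F_X$ is smooth near $X_*$ and $X_k\to X_*$.

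The rate then follows from the geometry of $T_*$. The apparent linear rate of $E\mapsto T_*^HET_*$ is $\rho(T_*)^2$, so the main obstacle is sharpening $\rho(T_*)^2$ to $\rho_{\mathbb{D}}(T_*)^2$, i.e.\ showing the error annihilates the generalized eigenspaces of $T_*$ attached to eigenvalues with $|\lambda|\geq 1$. I would prove this by a forcing argument: for a Jordan chain $\{v_j\}$ of $T_*$ at such a $\lambda$, the recursion gives $E_{k+1}\geq T_*^HE_kT_*$, whence $v_1^HE_{k+1}v_1\geq|\lambda|^2 v_1^HE_kv_1\geq v_1^HE_kv_1$; the nonnegative sequence $v_1^HE_kv_1$ is therefore nondecreasing yet tends to $0$, so it vanishes identically, and positive semidefiniteness forces $E_kv_1=0$, with an induction along the chain giving $E_kv_j=0$ for all $j$. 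Hence $E_k$ vanishes on the invariant subspace $V_u$ spanned by these chains; choosing a basis adapted to $\mathbb{C}^n=V_s\oplus V_u$ and restricting to the block $G_k:=P^HE_kP$ over the stable part $V_s$ (with columns of $P$ a basis of $V_s$), the recursion reduces to $G_{k+1}=S^HG_kS+O(\|G_k\|^2)$, where $S=T_*|_{V_s}$ satisfies $\rho(S)=\rho_{\mathbb{D}}(T_*)<1$. Since the perturbation is quadratic and $G_k\to 0$, a standard spectral-radius estimate (passing to a norm in which $\|S^H\cdot S\|\leq\rho(S)^2+\varepsilon$) gives $\limsup_k\sqrt[k]{\|G_k\|}\leq\rho(S)^2$, and the equivalence $\|E_k\|\asymp\|G_k\|$ yields $\limsup_k\sqrt[k]{\|X_k-X_{+,m}\|}\leq\rho_{\mathbb{D}}(T_{X_{+,m}})^2<1$. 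I expect the delicate points to be the Jordan-chain forcing step for unimodular $\lambda$ and controlling the quadratic term uniformly in the adapted basis.
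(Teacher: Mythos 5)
Your argument is correct. Note that the paper itself does not prove this statement --- it is imported verbatim from \cite[Theorem 3.2]{chiang2021} --- so there is no in-paper proof to compare against; what you have written is a valid self-contained proof built from exactly the toolkit this paper deploys for the companion result \cref{thm3p2}: monotonicity and boundedness of the FPI via the order-preserving property of $\mathcal{R}$, the error recursion $E_{k+1}=T_*^HE_kT_*+K(X_*,X_k)$ obtained from \eqref{Req-b} and \eqref{K3}, a Jordan-chain annihilation argument in the spirit of \cref{lem2p5} to kill the eigenspaces of $T_*$ outside $\mathbb{D}$, and the spectral-radius estimate on the stable block (the step the paper outsources to the appendix of \cite{l.c18}). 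The error recursion, the forcing step $v_j^HE_{k+1}v_j\geq|\lambda|^2v_j^HE_kv_j$ for $|\lambda|\geq 1$, and the reduction to $G_{k+1}=S^HG_kS+O(\|G_k\|^2)$ all check out. The only cosmetic gap is that you fix $X_0=0$ while the statement admits any $X_0$ with $0\leq X_0\leq H$; this is repaired in one line, since $X_1=\mathcal{R}(X_0)\geq\mathcal{R}(0)=H\geq X_0$ and the same induction and upper bound by any $S\in\mathcal{R}_\geq\cap\mathbb{N}_n$ then go through unchanged.
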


As a special case of Theorem 1.1 in \cite{g98}, if $(A,B)$ is stabilizable and $H\geq 0$, then
the DARE \eqref{dare-a} has a unique almost stabilizing $X_{\rm s} \in \mathbb{H}_n$,
which is the maximal element of the solution set $\mathcal{R}_=$ satisfying $R+B^H X_{\rm s} B > 0$.
In the following theorem the existence and uniqueness of the almost stabilizing solution to the DARE \eqref{dare}
can be established under the framework of the fixed-point iteration.
\begin{theorem} \label{thm3p2}
If there exists $X_\star \in \mathbb{H}_n$ satisfying $\rho (T_{X_\star}) < 1$, then the following statements hold:
\begin{enumerate}
  \item[(i)] $\mathcal{S}_\geq = \{ X\in \mathbb{H}_n\, |\, \mathcal{S}_{T_{X_\star}} (X)\geq H_{X_\star} \}$ is a nonempty subset of $\mathcal{R}_\geq\cap\mathbb{N}_n$, where $H_{X_\star}=H+F_{X_\star}^H R F_{X_\star}$.
  \item[(ii)] The FPI \eqref{fpi-psd} generates a nonincreasing sequence of positive semidefinite matrices $\{X_k\}_{k=0}^\infty$ with $X_0 = \mathcal{S}_{T_{X_\star}}^{-1} (H_{X_\star})$,
  which converges at least R-linearly to the maximal element $X_{+,M}$ of $\mathcal{R}_\geq\cap \mathbb{N}_n$ with the rate of convergence
  \[  \limsup_{k\rightarrow \infty} \sqrt[k]{\|X_k - X_{+,M}\|}\leq \rho (T_{+,M})^2, \]
 provided that $\rho (T_{+,M})<1$.
\item[(iii)] For each $k\geq 0$, $\rho(T_{X_k}) < 1$ and thus $\rho (T_{+,M}) \leq 1$.
\end{enumerate}
\end{theorem}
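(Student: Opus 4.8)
The plan is to treat the three claims in order, with the Stein identity \eqref{Req-b}, Theorem~\ref{thm2p3}, and the order-preservation of $\mathcal{S}_{T_{X_\star}}^{-1}$ (valid since $\rho(T_{X_\star})<1$) as the main tools. For (i), the decisive observation is that $\mathcal{S}_{T_{X_\star}}(X)\geq H_{X_\star}$ is equivalent to $X\geq X_0:=\mathcal{S}_{T_{X_\star}}^{-1}(H_{X_\star})$, so $\mathcal{S}_\geq=\{X\in\mathbb{H}_n\,|\,X\geq X_0\}$, which is nonempty and contains $X_0$. Since $H_{X_\star}=H+F_{X_\star}^HRF_{X_\star}\geq 0$, order-preservation gives $X_0\geq 0$, hence $X\geq 0$ and $R+B^HXB>0$ for every $X\in\mathcal{S}_\geq$. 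Substituting $\widehat{X}=X_\star$ into \eqref{Req-b},
\[ X-\mathcal{R}(X)=\bigl(\mathcal{S}_{T_{X_\star}}(X)-H_{X_\star}\bigr)+K(X_\star,X), \]
both summands are positive semidefinite (the first by the definition of $\mathcal{S}_\geq$, the second because $R+B^HXB>0$ forces $K(X_\star,X)\geq 0$), so $X\in\mathcal{R}_\geq\cap\mathbb{N}_n$; this gives $\mathcal{S}_\geq\subseteq\mathcal{R}_\geq\cap\mathbb{N}_n$.

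For (ii), start from $X_0\in\mathcal{S}_\geq\subseteq\mathcal{R}_\geq\cap\mathbb{N}_n$, so $X_0\geq\mathcal{R}(X_0)=X_1$. Because $\mathcal{R}$ is order-preserving and $X_k\geq 0$ forces $X_k\in\mathrm{dom}(\mathcal{R})$, an induction gives $X_k\geq X_{k+1}=\mathcal{R}(X_k)\geq\mathcal{R}(0)=H\geq 0$; thus $\{X_k\}$ is nonincreasing, positive semidefinite and bounded below, and converges to some $X_{+,M}\geq 0$ that is a fixed point by continuity of $\mathcal{R}$ at $X_{+,M}$. To identify $X_{+,M}$ as the maximal element I would show $X_0\geq S$ for every solution $S\in\mathcal{R}_=\cap\mathbb{N}_n$: combining \eqref{K2} and \eqref{K3} gives $\mathcal{S}_{T_{X_\star}}(X_0-S)=K(X_\star,S)\geq 0$ (the term $S-\mathcal{R}(S)$ vanishes here, which is exactly why the comparison is sign-definite), so order-preservation yields $X_0\geq S$, and then $X_k\geq S$ for all $k$ by iterating $\mathcal{R}$, whence $X_{+,M}\geq S$. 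For the rate, put $E_k:=X_k-X_{+,M}\geq 0$ and compare \eqref{Req-b} at $X_k$ with that at $X_{+,M}$, both taken with $\widehat{X}=X_{+,M}$; this gives $E_{k+1}=T_{+,M}^HE_kT_{+,M}-K(X_{+,M},X_k)$, and since $K(X_{+,M},X_k)\geq 0$ by Theorem~\ref{thm2p3} we obtain $0\leq E_{k+1}\leq T_{+,M}^HE_kT_{+,M}$, hence $0\leq E_k\leq (T_{+,M}^H)^kE_0T_{+,M}^k$. Taking $k$-th roots of norms and using $\limsup_k\|T_{+,M}^k\|^{1/k}=\rho(T_{+,M})$ yields $\limsup_k\sqrt[k]{\|E_k\|}\leq\rho(T_{+,M})^2$ when $\rho(T_{+,M})<1$.

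For (iii), each iterate lies in $\mathcal{R}_\geq\cap\mathbb{N}_n$, so by \eqref{K2},
\[ \mathcal{S}_{T_{X_k}}(X_k)=(X_k-X_{k+1})+H_{X_k}\geq 0, \]
a Stein inequality with positive semidefinite right-hand side and $X_k\geq 0$; Lemma~\ref{lem2p2} applied with $B=T_{X_k}$ then gives $\rho(T_{X_k})\leq 1$, and passing to the limit with continuity of eigenvalues gives $\rho(T_{+,M})\leq 1$. The delicate point, and what I expect to be the main obstacle, is upgrading this to the strict bound $\rho(T_{X_k})<1$. This needs the stable matrix $T_{X_\star}$ to enter Lemma~\ref{lem2p2}: from $T_{X_k}-T_{X_\star}=B(F_{X_\star}-F_{X_k})$ one has $\mathrm{Ker}\,K(X_\star,X_k)=\mathrm{Ker}(F_{X_\star}-F_{X_k})\subseteq\mathrm{Ker}(T_{X_k}-T_{X_\star})$, so if the right-hand side of the Stein inequality can be made to dominate $K(X_\star,X_k)$ (or if the eigenspace condition $\mathrm{Ker}(Q)\cap E_\lambda(T_{X_k})=\{0\}$ of Lemma~\ref{lem2p2}(ii) can be verified at the unimodular eigenvalues), then $\rho(T_{X_\star})<1$ forces $\rho(T_{X_k})<1$. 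Controlling this kernel condition uniformly in $k$ is the crux; I would attempt it either by induction, propagating strict stability from $T_{X_k}$ to $T_{X_{k+1}}$ through the analogous identity for consecutive iterates, or by settling the base case $\rho(T_{X_0})<1$ directly from the defining relation $\mathcal{S}_{T_{X_\star}}(X_0)=H_{X_\star}$.
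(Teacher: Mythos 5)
Parts (i) and (ii) of your proposal are correct and follow essentially the paper's own route: the same use of \eqref{Req-b} with $\widehat{X}=X_\star$, the same order-preservation arguments for monotonicity and maximality. In (ii) you actually do more than the paper, which outsources the rate to the appendix of a reference, whereas your error recursion $E_{k+1}=T_{X_{+,M}}^H E_k T_{X_{+,M}}-K(X_{+,M},X_k)$ with $K(X_{+,M},X_k)\geq 0$ is a clean self-contained derivation of the bound $\rho(T_{X_{+,M}})^2$.

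Part (iii), however, has a genuine gap, and it is exactly the point you flag as ``the crux'' without resolving. Two problems. First, your opening step --- deducing $\rho(T_{X_k})\leq 1$ from the bare inequality $\mathcal{S}_{T_{X_k}}(X_k)\geq 0$ via \cref{lem2p2} --- is not valid: that lemma always requires a comparison matrix $A$ with $\mathrm{Ker}(Q)\subseteq\mathrm{Ker}(B-A)$ and $\rho(A)\leq 1$; with no information on $\mathrm{Ker}(Q)$ the only admissible choice is $A=B$, which makes the conclusion vacuous (a positive semidefinite but singular $X_0$ solving $\mathcal{S}_B(X_0)\geq 0$ implies nothing about $\rho(B)$). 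Second, you never establish the domination needed to invoke \cref{lem2p2}(ii). The paper's argument is an induction seeded by $X_{-1}:=X_\star$: equating the two expressions
\begin{equation*}
X_k-X_{k+1}=\mathcal{S}_{T_{X_k}}(X_k)-H_{X_k}=\mathcal{S}_{T_{X_{k-1}}}(X_k)-H_{X_{k-1}}+K(X_{k-1},X_k),
\end{equation*}
using the recursion $X_{k+1}=T_{X_k}^HX_kT_{X_k}+H_{X_k}$ from the first identity at step $k-1$ together with $X_k\leq X_{k-1}$ to get $\mathcal{S}_{T_{X_{k-1}}}(X_k)-H_{X_{k-1}}\geq X_k-T_{X_{k-1}}^HX_{k-1}T_{X_{k-1}}-H_{X_{k-1}}=0$, one obtains $\mathcal{S}_{T_{X_k}}(X_k)\geq H_{X_k}+K(X_{k-1},X_k)\geq K(X_{k-1},X_k)=:Q_k$ with $\mathrm{Ker}(Q_k)=\mathrm{Ker}(F_{X_{k-1}}-F_{X_k})\subseteq\mathrm{Ker}(T_{X_{k-1}}-T_{X_k})$. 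Then \cref{lem2p2}(ii) with comparison matrix $T_{X_{k-1}}$ propagates $\rho(T_{X_{k-1}})<1\Rightarrow\rho(T_{X_k})<1$, starting from $\rho(T_{X_\star})<1$. Note the relevant kernel is that of $K(X_{k-1},X_k)$ for \emph{consecutive} iterates, not $K(X_\star,X_k)$ as in your sketch (except at $k=0$). Without the displayed chain of inequalities your proposed induction has no engine, so (iii) is not proved as written.
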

\begin{proof}
Notice that the operator $\mathcal{S}_{T_{X_\star}}$ is invertible because $\rho (T_{X_\star}) < 1$.
  \begin{enumerate}
    \item[(i)] Since $H_{X_\star} \geq 0$, it follows from \cref{lem2p1} that $X_0 = \mathcal{S}_{T_{X_\star}}^{-1} (H_{X_\star})\in \mathcal{S}_\geq \cap \mathbb{N}_n$,
    and we further deduce that
    \[  X-\mathcal{R}(X) = \mathcal{S}_{T_{X_\star}}(X) - H_{X_\star} + K(X_\star, X)\geq 0 \]
   and $X\geq 0$ for all $X\in \mathcal{S}_\geq$, since $K(X_\star,X)\geq 0$ in \eqref{Req-b} for $X\geq 0$.
  \item[(ii)] Because $X_0 = \mathcal{S}_{T_{X_\star}}^{-1} (H_{X_\star}) \geq 0$ is an element of the set $\mathcal{R}_\geq$ and $H\geq 0$, we see tat $X_0\geq \mathcal{R}(X_0) = X_1\geq 0$ and hence
  it is easily seen that $X_k\geq X_{k+1}\geq 0$ for $k\geq 1$ by induction, since the operator $\mathcal{R}(\cdot)$ is order preserving.
  Thus, $\{X_k\}_{k=0}^\infty$ generated by the FPI \eqref{fpi-psd} is
  a nonincreasing sequence of positive semidefinite matrices,
  which converges to a limit $X_{+,M} \in \mathbb{N}_n$ eventually.
  On the other hand, for any $X_+\in \mathcal{R}_=$, it follows from \cref{thm2p3} and \eqref{Req-b} that $K(X_\star, X_+)\geq 0$ and
 $H_{X_\star} = \mathcal{S}_{T_{X_\star}}(X_+) + K(X_\star, X_+) \geq  \mathcal{S}_{T_{X_\star}}(X_+)$.
  Since the inverse of $\mathcal{S}_{T_{X_\star}}$ is order preserving, we see that $X_0 = \mathcal{S}_{T_{X_\star}}^{-1} (H_{X_\star}) \geq X_+$.
  Suppose that $X_i\geq X_+$ for $i=0,1,2,\ldots, k$. Then $X_{k+1} - X_+ = \mathcal{R}(X_k) - \mathcal{R}(X_+)\geq 0$ and
  we thus deduce that $X_k \geq X_+$ for all $k$. This yields $X_\infty:= \lim\limits_{k\rightarrow \infty} X_k \geq X_+$ for all $X_+\in \mathcal{R}_=$,
  i.e., the limit of $\{X_k\}_{k=0}^\infty$ must be the maximal element of the solution set $\mathcal{R}_=$ or $X_{+,M}=X_{\infty}$.
  Moreover, the proof of the rate of convergence for $\{X_k\}_{k=0}^\infty$ follows from the Appendix of \cite{l.c18}.
  \item[(iii)] For each $k\geq 0$, from \eqref{Req-b} we see that
\begin{subequations}\label{AP}
\begin{align}
X_k-X_{k+1} &= \mathcal{S}_{T_{X_k}} (X_k) - H_{X_k} \label{XX1} \\
&=\mathcal{S}_{T_{X_{k-1}}} (X_k) - H_{X_{k-1}} + K(X_{k-1}, X_k), \label{XX2}
\end{align}
\end{subequations}
if we let $X_{-1} := X_\star \in \mathbb{H}_n$. According to the Eq.~\eqref{XX1}, one deduces that $X_{k+1} = T_{X_k}^H X_k T_{X_k} + H_{X_k}$ for $k\geq 0$,
and it follows from \eqref{XX2} that
\[
\mathcal{S}_{T_{X_{k-1}}} (X_k) - H_{X_{k-1}} \geq X_k - T_{X_{k-1}}^H X_{k-1} T_{X_{k-1}} - H_{X_{k-1}} = 0
\]
for all $k\geq 1$. Therefore, this yields the following Stein inequality
\[
\mathcal{S}_{T_{X_k}}(X_k) \geq  H_{X_k} + K(X_{k-1}, X_k) \geq 0
\]
for each $k\geq 0$, where the equality  holds for $k=0$. Furthermore, we also have
\begin{align*}
\mathrm{Ker} \left(K(X_{k-1}, X_k)\right) &= \mathrm{Ker}\left(B^H(U_{X_{k-1}} - U_{X_{k}})\right) = \mathrm{Ker}(F_{X_{k-1}} - F_{X_k}) \\
&\subseteq \mathrm{Ker}(BF_{X_{k-1}} - BF_{X_k}) = \mathrm{Ker}(T_{X_{k-1}} - T_{X_k}).
\end{align*}
Applying \cref{lem2p2}, we conclude that $\rho(T_{X_k})<1$, if $\rho(T_{X_{k-1}})<1$, for each $k\geq 0$.
  \end{enumerate}
\end{proof}

Now, in the following theorem, we will give some equivalent conditions for the stabilizability of the pair $(A,B)$
from a differnt point of view.
\begin{theorem}\label{thm3p3}
The following statements are equivalent:
\begin{enumerate}
  \item[(i)] The pair $(A,B)$ is stabilizable.
  \item[(ii)] Tere exists a matrix $X_\star \in \mathbb{H}_n$ satisfying $\rho (T_{X_\star}) < 1$.
  \item[(iii)] The DARE \eqref{dare} has a unique almost stabilizing solution $X\in \mathbb{H}_n$.
\item[(iv)] The DARE \eqref{dare} has a maximal and almost stabilizing solution $X\in \mathbb{H}_n$.
\end{enumerate}
\end{theorem}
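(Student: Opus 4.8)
The plan is to close the ring of implications $(i)\Rightarrow(ii)\Rightarrow(iv)\Rightarrow(iii)\Rightarrow(i)$; the converse $(ii)\Rightarrow(i)$ is free, since $\rho(T_{X_\star})<1$ exhibits $F_{X_\star}$ as a stabilizing feedback for $(A,B)$, so that stabilizability is at our disposal as soon as any of (ii)--(iv) has been reached. For $(i)\Rightarrow(ii)$ I would build $X_\star$ out of a stabilizing feedback $F_0$ (so $\rho(A_{F_0})<1$ with $A_{F_0}=A-BF_0$) by setting $X_\star=\mathcal{S}_{A_{F_0}}^{-1}(H_{F_0})=\sum_{k\ge0}(A_{F_0}^k)^HH_{F_0}A_{F_0}^k\ge0$, where $H_{F_0}=H+F_0^HRF_0\ge0$. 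Then $R+B^HX_\star B>0$ because $X_\star\ge0$, and \eqref{Req-a} reduces to $X_\star-\mathcal{R}(X_\star)=K_{F_0}(X_\star)\ge0$; feeding this into the $\widehat X=X=X_\star$ instance of \eqref{Req-b} gives $\mathcal{S}_{T_{X_\star}}(X_\star)=K_{F_0}(X_\star)+H_{X_\star}\ge K_{F_0}(X_\star)$. Since $\mathrm{Ker}\,K_{F_0}(X_\star)=\mathrm{Ker}(F_0-F_{X_\star})\subseteq\mathrm{Ker}\big(B(F_0-F_{X_\star})\big)=\mathrm{Ker}(T_{X_\star}-A_{F_0})$ and $\rho(A_{F_0})<1$, \cref{lem2p2}(ii) yields $\rho(T_{X_\star})<1$. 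The implication $(ii)\Rightarrow(iv)$ is then nothing but \cref{thm3p2}, whose monotone FPI converges to the maximal element $X_{+,M}$ of $\mathcal{R}_=$ with $\rho(T_{X_{+,M}})\le1$, i.e.\ a maximal and almost stabilizing solution.

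I would dispatch $(iv)\Rightarrow(i)$ and $(iii)\Rightarrow(i)$ simultaneously by contraposition, using a single perturbation. If $(A,B)$ is not stabilizable there is an uncontrollable mode $0\ne v$, $A^Hv=\bar\lambda_0v$, $B^Hv=0$ with $|\lambda_0|\ge1$; then $Gv=0$, so $v^H(I+GX)^{-1}=v^H$ and $(I+G(X+tvv^H))^{-1}=(I+GX)^{-1}$ for every $X\in\mathrm{dom}(\mathcal{R})$, and a direct computation gives $A^Hvv^H(I+GX)^{-1}A=|\lambda_0|^2vv^H$. A mode with $|\lambda_0|>1$ sits in $\sigma(T_X)$ for every solution $X$ and hence forbids any almost stabilizing solution at all; a unimodular mode instead produces, from any $X\in\mathcal{R}_=$, a whole ray $X+tvv^H\in\mathcal{R}_=$ $(t\ge0)$ with $T_{X+tvv^H}=T_X$, so that every member is again almost stabilizing. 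Either way no maximal solution and no unique almost stabilizing solution can exist, contradicting (iv) and (iii) respectively.

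The substantive step is $(iv)\Rightarrow(iii)$, the uniqueness. Let $X_M$ be a maximal almost stabilizing solution and $X_{\rm as}$ any almost stabilizing solution; maximality gives $X_{\rm as}\le X_M$, so only $X_{\rm as}\ge X_M$ remains. \cref{lem2p7}(i) applied with $\widehat X=X_{\rm as}$ and $X=X_M$ gives $\mathcal{S}_{T_{X_{\rm as}}}(W)=K(X_{\rm as},X_M)$ for $W:=X_{\rm as}-X_M$, and the feedback identity $F_{X_{\rm as}}-F_{X_M}=(R+B^HX_MB)^{-1}B^HWT_{X_{\rm as}}$ recasts the right-hand side as $(WT_{X_{\rm as}})^HQ_1(WT_{X_{\rm as}})$ with $Q_1=B(R+B^HX_MB)^{-1}B^H\ge0$ and $\mathrm{Ker}\,Q_1=\mathrm{Ker}(B^H)$. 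As $\rho(T_{X_{\rm as}})\le1$, this is exactly the setting of \cref{lem2p5}, whose conclusion $W\ge0$ together with $W\le0$ forces $X_{\rm as}=X_M$.

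The main obstacle is verifying the eigenspace hypothesis \eqref{ass} required by \cref{lem2p5}, namely $E_{\bar\lambda}(T_{X_{\rm as}}^H)\cap\mathrm{Ker}(B^H)=\{0\}$ at every unimodular $\lambda\in\sigma(T_{X_{\rm as}})$. Here stabilizability (available through $(iv)\Rightarrow(i)$) is decisive: a genuine eigenvector $u$ with $T_{X_{\rm as}}^Hu=\bar\lambda u$ and $B^Hu=0$ satisfies $A^Hu=\bar\lambda u$, whence $u\in\mathrm{Ker}(A^H-\bar\lambda I)\cap\mathrm{Ker}(B^H)=\{0\}$ by the PBH test. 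The delicate point is that \cref{lem2p5} is phrased with the full generalized eigenspace, whereas stabilizability only supplies the condition at honest eigenvectors; I would therefore have to check that the induction underlying \cref{lem2p5} actually closes under this weaker, genuinely available condition and to control the Jordan structure of the unimodular spectrum of $T_{X_{\rm as}}$, which is the crux of the entire argument.
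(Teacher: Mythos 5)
Your proposal is correct and follows essentially the same route as the paper: the same construction $X_\star=\mathcal{S}_{A_F}^{-1}(H_F)$ plus \cref{lem2p2} for (i)$\Rightarrow$(ii), \cref{thm3p2} for existence of the maximal almost stabilizing solution, the same rank-one perturbation $X+tyy^H$ along an uncontrollable unimodular mode for the contrapositives, and the same reduction of uniqueness to the Stein inequality $\mathcal{S}_{T_{\widehat X}}(\Delta)=(\Delta T_{\widehat X})^HQ_1(\Delta T_{\widehat X})$ handled by \cref{lem2p5} (your version of the feedback identity, with $T_{X_{\rm as}}$ rather than $T_{X_M}$ on the right, is in fact the correct one). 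The one point you leave open --- that stabilizability only certifies hypothesis \eqref{ass} at genuine eigenvectors of $T_{X_{\rm as}}^H$, not on the full generalized eigenspace --- is precisely the point the paper passes over silently, but it does close: in the induction proving \cref{lem2p5} each step produces a vector $X_0v_j$ lying in $\mathrm{Ker}(A^H-\bar\lambda I)\cap\mathrm{Ker}(Q_1)$, i.e.\ only the eigenvector-level (PBH) form of \eqref{ass} is ever invoked, and that is exactly what stabilizability supplies.
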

\begin{proof}
The equivalence of these statements is given as follows.
\begin{enumerate}
  \item[(i)$\Rightarrow$(ii):] If $(A,B)$ is stabilizable, then $A_F= A-BF$ is d-stable, i.e., $\rho (A_F) < 1$, for some $F\in \mathbb{C}^{m\times n}$.
Recall that $H_F= H + F^H R F \geq 0$, then $X_\star = \mathcal{S}_{A_F}^{-1} (H_F) \geq 0$ and we further have
\begin{align*} 
  \mathcal{S}_{T_{X_\star}} (X_\star) &= (F - F_{X_\star})^H (I + B^H X_\star B) (F - F_{X_\star}) + H + F_{X_\star}^H F_{X_\star} \\
  &\geq (F - F_{X_\star})^H (I + B^H X_\star B) (F - F_{X_\star}) \geq 0,
\end{align*}
Since $\mathrm{Ker}(F - F_{X_\star})\subseteq \mathrm{Ker}(A_F - T_{X_\star})$,
it follows from \cref{lem2p2} and $\rho (A_F) < 1$ that the closed-loop matrix associated with $X_\star\in \mathbb{H}_n$ is also d-stable, i.e.,
$\rho (T_{X_\star}) < 1$.
  \item[(i)$\Leftarrow$(ii):] Suppose that the statement (ii) holds. If we let $F = (R+B^H X_\star B)^{-1}B^H X_\star A$, then
  $\rho (A-BF) = \rho (T_{X_\star}) < 1$ and hence $(A,B)$ is stabilizable.
  \item[(i)$\Rightarrow$(iii):] Suppose that $(A,B)$ is stabilizable. Firstly, it follows from \cref{thm3p2} that the DARE \eqref{dare} has an almost stabilizing solution $X\geq 0$,
      which is the maximal element of the solution set $\mathcal{R}_=$, if either the statement (i) or (ii) holds.
      For the uniqueness of the solution $X$, we assume that the DARE \eqref{dare} has another solution
      $\widehat{X}\in \mathbb{H}_n$ with $\rho (T_{\widehat{X}}) \leq 1$. It is clear that $\widehat{X} \leq X$ because of the maximality of $X$.
      On the other hand, from \cref{lem2p5} and let
      $\Delta := \widehat{X} - X$, we have $\mathcal{S}_{T_{\widehat{X}}}(\Delta) =K(\widehat{X},X)$. From $F_{\widehat{X}}-F_{X}=(R+B^H {X} B)^{-1}B^H\Delta T_{{X}}$ we deduce that
    \[  \mathcal{S}_{T_{\widehat{X}}}(\Delta) =K(\widehat{X},X)=T_{\widehat{X}}^H\Delta Q_1\Delta T_{\widehat{X}},
      \]
      with $Q_1:= B(R+B^H X B)^{-1}B^H\geq 0$. Since $(A,B)$ is stabilizable and $T_{\widehat{X}} = A - BF_{\widehat{X}}$,
      it follows that $\mathrm{rank} [T_{\widehat{X}} - \lambda I\ B] = n$ for all $\lambda \in \mathbb{D}^c$, and hence $\Delta \geq 0$ or
      $\widehat{X} \geq X$ follows from \cref{lem2p5}. Therefore, the DARE \eqref{dare} must have
      a unique almost stabilizing solution $X=\widehat{X}$.
\item[(i)$\Leftarrow$(iii):] Suppose that the statement (iii) holds and that $(A,B)$ is not stabilizable. Then $\mathrm{rank}[A-\lambda I, B] < n$ for some $\lambda \in \mathbb{C}$ with $|\lambda | \geq 1$.
If $|\lambda | > 1$ , then $\mathrm{rank}(A-BF_X - \lambda I) < n$ or $\det (T_X - \lambda I) = 0$, and we thus have
$\lambda\in \sigma (T_X)$, which leads a contradiction with $\rho (T_X) \leq 1$.
On the other hand, if $\mathrm{rank}[A-\lambda I, B] < n$ for some $\lambda \in \mathbb{C}$ with $|\lambda | = 1$, there is a nonzero $y\in \mathbb{C}^n$
such that $y^H [A-\lambda I \ B] = 0$. Hence this implies that $\Delta := yy^H$ satisfies the following equation
\[  \mathcal{S}_{T_X}(\Delta) = T_{\Delta}^H \Delta BR^{-1}B^H\Delta T_{\Delta} = K(\Delta, 0).
\]
Therefore, it follows from \cref{lem2p7} that $\widehat{X} := X+\Delta \in \mathbb{H}_n$ is also a solution of the DARE \eqref{dare}
with $\sigma (T_{\widehat{X}}) = \sigma (T_X) \subseteq \bar{\mathbb{D}}$, which leads to a contradiction with the uniqueness of
the almost stabilizing solution $X$ to the DARE \eqref{dare}.
Consequently, the pair $(A,B)$ must be stabilizable.
 \item[(i)$\Rightarrow$(iv):] This follows from \cref{thm3p2} directly, since the statements (i) and (ii) are equivalent.
 \item[(i)$\Leftarrow$(iv):] Assume that the DARE \eqref{dare} has an almost stabilizing and maximal solution $X\in \mathbb{H}_n$.
 If $(A,B)$ is not stabilizable, applying the similar arguments described previously,
 there exist a nonzero $y\in \mathbb{C}$ and a matrix $\Delta  = yy^H \geq 0$ such that $X + \Delta$ is also an almost stabilizing solution
 of the DARE \eqref{dare-a}. Because of the maximality of $X$, we obtain $X+\Delta \leq X$ and hence $\Delta  =0$,
 which leads to a contradiction with $y \neq 0$. Thus $(A,B)$ is stabilizable.
\end{enumerate}
\end{proof}

\subsection{Negative semidefinite extremal solutions} \label{sec3p4}
In this section, when $A\in \mathbb{R}^{n\times n}$ is assumed to be nonsingular, the existence of the negative semidefinite extremal solutions
to the DARE \eqref{dare} will be addressed under the framework of two FPIs \eqref{fpi-nsd}.

\subsubsection{The first approach}
Recall that the matrices $\widehat{A}$ and $\widehat{B}$ are defined by \eqref{dual3}. For a nonzero $\lambda$, the block row matrix $\begin{bmatrix}  \widehat{A}-\lambda I & \widehat{B}\end{bmatrix}$ can be decomposed into
\[
\begin{bmatrix} \widehat{A}-\lambda I & \widehat{B}\end{bmatrix}=A^{-1}\begin{bmatrix}  A-\frac{1}{\lambda}I & B\end{bmatrix} \begin{bmatrix} -\lambda I & 0\\0 & I \end{bmatrix} \begin{bmatrix}  I & 0 \\ -\widehat{R}^{-1}B^H H^{(A)} & 0\end{bmatrix}.
\]
Thus, we see that the pair $(\widehat{A}, \widehat{B})$ is stabilizable if and only if $\mathrm{rank}[A-\lambda I\ B] = n$ for $\lambda\in \bar{\mathbb{D}}\backslash \{0\}$. Suppose that $\mathrm{rank}[A-\lambda I\ B] = n$ for all $\lambda\in \bar{\mathbb{D}}\backslash \{0\}$. Due to the stabilizability of $(\widehat{A}, \widehat{B})$ there exists a $\widehat{F}\in\mathbb{C}^{m\times n}$ such that the matrix $\widehat{A}_{\widehat{F}}:\widehat{A}-\widehat{B}\widehat{F}$ satisfies $\sigma(\widehat{A}_{\widehat{F}})\subseteq \mathbb{D}$.


The following results can be proven by applying the similar arguments described in the previous sections.
Therefore, we state two theorems without proof as follows.
\begin{theorem}\label{thm3p1-dual}
Assume that $\widehat{H}\geq 0$. Then the following statements hold:
\begin{itemize}
  \item[(i)] If $\mathcal{D}_{\geq}^{(1)}\cap \mathbb{N}_n \neq \emptyset$, then FPI \eqref{fpi-nsd1} generates a nondecreasing sequence of positive semidefinite matrices $\{{Y}_k\}_{k=0}^\infty$ with $0 \leq {Y}_0 \leq \widehat{H}$, which converges at least R-linearly to $-X_{-,M}$ with the rate of convergence
  \[  \limsup_{k\rightarrow \infty} \sqrt[k]{\|{Y}_k + X_{-,M}\|}\leq \rho_{\mathbb{D}} (T_{X_{-,M}}^{-1})^2 < 1. \]
  \item[(ii)]
If $\mathrm{rank}[A-\lambda I\ B] = n$ for all $\lambda\in \bar{\mathbb{D}}\backslash \{0\}$, then the FPI \eqref{fpi-nsd} generates a nonincreasing sequence of positive semidefinite matrices $\{{Y}_k\}_{k=0}^\infty$
with ${Y}_0 =\mathcal{S}_{\widehat{A}_{\widehat{F}}}(\widehat{H}_{\widehat{F}})\geq 0$, which converges to $-X_{-,m}$
  with the rate of convergence
  \[  \limsup_{k\rightarrow \infty} \sqrt[k]{\|{Y}_k +X_{-,m}\|}\leq \rho (T_{X_{-,m}}^{-1})^2=\mu(T_{X_{-,m}})^2\leq 1,  \]
 where $\widehat{H}_{\widehat{F}}=\widehat{H}+\widehat{F}^H R \widehat{F}$.
 \end{itemize}
\end{theorem}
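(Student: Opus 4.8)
The plan is to observe that the dual operator $\mathcal{D}_1$ in \eqref{dual} is \emph{itself} a Riccati operator of exactly the same type as $\mathcal{R}$, obtained by replacing the data $(A,G,H)$ with $(\widehat{A},\widehat{G},\widehat{H})$. From \eqref{dual3} we have $\widehat{G}=\widehat{B}\widehat{R}^{-1}\widehat{B}^H\geq 0$ with $\widehat{R}>0$, while $\widehat{H}\geq 0$ is the standing hypothesis of the theorem; thus $(\widehat{A},\widehat{G},\widehat{H})$ fulfils the same structural conditions as $(A,G,H)$. Consequently \cref{thm3p1}, \cref{thm3p2} and \cref{thm3p3} apply \emph{verbatim} to the dual DARE, with $T_X$ replaced by the dual closed-loop matrix $\widehat{T}_Y:=(I+\widehat{G}Y)^{-1}\widehat{A}$ and $\mathcal{R}_\geq,\mathcal{R}_=$ replaced by $\mathcal{D}^{(1)}_\geq$ and its equality analogue. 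The proof then reduces to running these theorems on $\mathcal{D}_1$ and translating the conclusions back through the duality \eqref{dual1}, which gives the solution bijection $X\leftrightarrow Y=-X$, together with the spectral identity $\sigma(\widehat{T}_Y)=\sigma(T_X^{-1})$ recorded after \eqref{dual1}.

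For part~(i), the hypothesis $\mathcal{D}^{(1)}_\geq\cap\mathbb{N}_n\neq\emptyset$ together with $\widehat{H}\geq 0$ is precisely what \cref{thm3p1} requires for $\mathcal{D}_1$. Applying it to the iteration \eqref{fpi-nsd1} yields a nondecreasing sequence $\{Y_k\}$ with $0\leq Y_0\leq\widehat{H}$ converging at least R-linearly to the minimal element $Y_\infty$ of $\mathcal{D}^{(1)}_\geq\cap\mathbb{N}_n$ at rate $\rho_{\mathbb{D}}(\widehat{T}_{Y_\infty})^2<1$. To identify the limit I would use that $Y=-X$ is an order-reversing bijection of solutions: if a negative semidefinite $X$ solves \eqref{dare-b}, then $Y=-X\geq 0$ solves \eqref{dual}, so $Y_\infty\leq -X$ by minimality, whence $-Y_\infty$ is the \emph{maximal} negative semidefinite solution, i.e. $Y_\infty=-X_{-,M}$. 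Substituting $\sigma(\widehat{T}_{Y_\infty})=\sigma(T_{X_{-,M}}^{-1})$ converts the rate into $\rho_{\mathbb{D}}(T_{X_{-,M}}^{-1})^2$, as claimed.

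For part~(ii), I would first use the block-row factorization displayed immediately before the theorem to replace the rank condition on $[A-\lambda I\ B]$ over $\bar{\mathbb{D}}\backslash\{0\}$ by the equivalent stabilizability of $(\widehat{A},\widehat{B})$. By the dual form of \cref{thm3p3} this furnishes a matrix $Y_\star$ with $\rho(\widehat{T}_{Y_\star})<1$, built from a feedback $\widehat{F}$ with $\rho(\widehat{A}_{\widehat{F}})<1$ exactly as in the proof of \cref{thm3p3}, implication (i)$\Rightarrow$(ii), and hence the admissible starting matrix $Y_0$ required by \cref{thm3p2}. That theorem then produces a nonincreasing sequence of positive semidefinite matrices converging to the maximal element $Y_\infty$ of $\mathcal{D}^{(1)}_\geq\cap\mathbb{N}_n$; under the same order-reversing bijection $-Y_\infty$ is the minimal negative semidefinite solution, so $Y_\infty=-X_{-,m}$, and the rate $\rho(\widehat{T}_{Y_\infty})^2$ becomes $\rho(T_{X_{-,m}}^{-1})^2\leq 1$ because the maximal dual solution is almost stabilizing.

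I expect the convergence analysis itself to require no new ideas, since it is inherited wholesale from \cref{thm3p1} and \cref{thm3p2}, so the real work, and the main obstacle, is the careful bookkeeping of the duality. One must confirm from \eqref{dual1} that $X\mapsto-X$ genuinely intertwines the iterations \eqref{fpi-psd} and \eqref{fpi-nsd1} and reverses the partial order, so that ``minimal'' and ``maximal'' are interchanged, and then propagate the spectral identity $\sigma(\widehat{T}_Y)=\sigma(T_X^{-1})$ correctly through the rate expressions. The latter is where the reciprocal closed-loop matrix $T_X^{-1}$, and hence the quantities $\rho_{\mathbb{D}}(T_{X_{-,M}}^{-1})$ and $\rho(T_{X_{-,m}}^{-1})$ together with the antistabilizing property $\mu(T_{X_{-,m}})\geq 1$ of the minimal solution, enter the stated convergence rates.
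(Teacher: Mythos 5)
Your proposal is correct and is precisely the argument the paper intends: the paper states \cref{thm3p1-dual} without proof, remarking only that it follows ``by applying the similar arguments described in the previous sections,'' i.e.\ by running \cref{thm3p1} and \cref{thm3p2,thm3p3} on the dual data $(\widehat{A},\widehat{G},\widehat{H})$ and translating back through the bijection $Y=-X$ of \eqref{dual1} and the spectral identity $\sigma\bigl((I+\widehat{G}Y)^{-1}\widehat{A}\bigr)=\sigma(T_X^{-1})$. Your bookkeeping of the order reversal (minimal dual element $\leftrightarrow$ maximal negative semidefinite solution, and vice versa) and of the rates is exactly what the omitted proof requires.
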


\subsubsection{The second approach}
For solving the dual DARE \eqref{DY} of second kind, we will apply FPI~\eqref{fpi-nsd2} given by
\begin{equation*} 
  Z_{k+1} = \mathcal{D}_2(Z_k) = A Z_k (I + H Z_k)^{-1} A^H + G,\quad k \geq 0,
\end{equation*}
where $Z_0\geq 0$ is some initial matrix, $G = BR^{-1}B^H$ and $H\geq 0$, respectively.
The following result guarantees that the limit of the sequence $\{Z_k\}_{k=0}^\infty$, if it exists, will be a unique (almost) stabilizing solution of the dual DARE \eqref{DY} under wild assumptions.
\begin{lemma} \label{thm3p4}
Assume that $D_\geq^{(2)}\cap\mathbb{N}_n \neq \emptyset$.
Let $\{Z_k\}_{k=0}^\infty$ be a sequence generated by the FPI \eqref{fpi-nsd2}.
Then the following statements hold:
\begin{enumerate}
  \item[(i)] If $0\leq Z_0\leq G$, then $\{Z_k\}_{k=0}^\infty$ is a nondecreasing sequence of positive semidefinite matrices,
  which converges to the minimal element $Z_\infty$ of the set $D_\geq^{(2)}\cap\mathbb{N}_n$.
  \item[(ii)] Assume that $(A,B)$ is controllable, where $A\in \mathbb{C}^{n\times n}$ and $B\in \mathbb{C}^{n\times m}$.
  Then $Z_k > 0$ for $k\geq n$ if $Z_0 = 0$, and $Z_k > 0$ for $k\geq n-1$ if $Z_0 = G$.
Moreover, the dual DARE \eqref{DY} has a unique almost stabilizing solution $Z_\infty > 0$.
\item[(iii)] If $0\leq Z_0\leq G$ and $(A(I + Z_\infty H)^{-1}, B)$ is controllable, where $Z_\infty$ is the minimal element of $D_\geq^{(2)}\cap\mathbb{N}_n$,
then $Z_\infty$ is a positive definite stabilizing solution to the dual DARE \eqref{DY}.
\end{enumerate}
 \end{lemma}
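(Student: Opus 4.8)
The plan is to observe that the second dual DARE \eqref{DY} is \emph{itself} a DARE of the form \eqref{dare-b}, obtained by replacing the data $(A,G,H)$ with $(A^{H},H,G)$; since both $H\geq 0$ and $G\geq 0$, every structural result proved for $\mathcal{R}$ in \cref{sec2,sec3p1} transfers verbatim to $\mathcal{D}_2$, with the closed-loop matrix $(I+HZ)^{-1}A^{H}$ playing the role of $T_X$ and $G$ playing the role of the constant term $H$. Part~(i) then follows at once from \cref{thm3p1} applied to $\mathcal{D}_2$: the admissible initialization $0\leq X_0\leq H$ becomes $0\leq Z_0\leq G$, and the hypothesis $D_\geq^{(2)}\cap\mathbb{N}_n\neq\emptyset$ guarantees that \eqref{fpi-nsd2} generates a nondecreasing sequence in $\mathbb{N}_n$ converging to the minimal element $Z_\infty$.

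For part~(ii) I would first invoke part~(i) (valid for both $Z_0=0$ and $Z_0=G$, as $0\leq G$) to get monotone convergence to $Z_\infty$, and then establish positive definiteness by a kernel-nesting argument. Using $Z(I+HZ)^{-1}=Z^{1/2}(I+Z^{1/2}HZ^{1/2})^{-1}Z^{1/2}$, the recursion $Z_{k+1}=AZ_k(I+HZ_k)^{-1}A^{H}+G$ forces any $v\in\mathrm{Ker}(Z_{k+1})$ to annihilate both positive semidefinite summands, so that $B^{H}v=0$ and $A^{H}v\in\mathrm{Ker}(Z_k)$; hence $\mathrm{Ker}(Z_{k+1})\subseteq\mathrm{Ker}(B^{H})\cap\{v:A^{H}v\in\mathrm{Ker}(Z_k)\}$. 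Unfolding this inclusion shows that $v\in\mathrm{Ker}(Z_k)$ is orthogonal to $\mathrm{range}(A^{j}B)$ for $j=0,\ldots,k-1$ when $Z_0=0$, and for $j=0,\ldots,k$ when $Z_0=G$ (one extra index, since $\mathrm{Ker}(Z_0)=\mathrm{Ker}(B^{H})$). As $(A,B)$ is controllable, the controllability matrix $[\,B\ AB\ \cdots\ A^{n-1}B\,]$ has full row rank $n$, whence $\mathrm{Ker}(Z_n)=\{0\}$ for $Z_0=0$ and $\mathrm{Ker}(Z_{n-1})=\{0\}$ for $Z_0=G$; monotonicity propagates $Z_k>0$ to all larger $k$, and therefore $Z_\infty>0$.

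To see that $Z_\infty$ is almost stabilizing I would use the analogue of \eqref{K3} for $\mathcal{D}_2$, namely $\mathcal{S}_{(I+HZ_\infty)^{-1}A^{H}}(Z_\infty)=G+\widetilde{K}(Z_\infty,0)\geq G\geq 0$, where the Gram-type term $\widetilde{K}(Z_\infty,0)\geq 0$. Testing this Stein identity against a right eigenvector $v$ of $(I+HZ_\infty)^{-1}A^{H}$ with eigenvalue $\lambda$ gives $(1-|\lambda|^2)\,v^{H}Z_\infty v\geq v^{H}Gv\geq 0$, and since $Z_\infty>0$ this yields $|\lambda|\leq 1$, i.e.\ $\rho((I+HZ_\infty)^{-1}A^{H})\leq 1$. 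For uniqueness I would transfer through the duality of \cref{sec3p2}: by \eqref{dualcloseloop} the positive definite $Z_\infty$ corresponds to the nonsingular negative definite solution $X_\infty=-Z_\infty^{-1}$ of \eqref{dare-b} with $\mu(T_{X_\infty})\geq 1$, i.e.\ an almost antistabilizing solution; since controllability of $(A,B)$ forces $\mathrm{rank}[A-\lambda I\ B]=n$ for all $\lambda$ and $A$ is nonsingular, the almost antistabilizing solution of \eqref{dare-b} is unique (cf.\ \cite{j81,w95}), and the correspondence returns the uniqueness of $Z_\infty$. Part~(iii) then reuses the very same inequality $\mathcal{S}_{(I+HZ_\infty)^{-1}A^{H}}(Z_\infty)\geq G=B_0B_0^{H}$ with $B_0:=BR^{-1/2}$: observing that $A(I+Z_\infty H)^{-1}=\big((I+HZ_\infty)^{-1}A^{H}\big)^{H}$, the assumed controllability of $(A(I+Z_\infty H)^{-1},B)$ is, by the PBH test and $\mathrm{Ker}(B_0^{H})=\mathrm{Ker}(B^{H})$, exactly the rank hypothesis of \cref{lem2p6}; part~(ii) of that lemma gives $Z_\infty>0$ and part~(i) gives $\rho((I+HZ_\infty)^{-1}A^{H})<1$, so $Z_\infty$ is a positive definite stabilizing solution.

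The main obstacle I anticipate is the uniqueness claim in part~(ii): because the closed-loop matrix of $\mathcal{D}_2$ at $Z_\infty$ may carry eigenvalues on $\partial\mathbb{D}$, a naive Lyapunov/Stein uniqueness argument is unavailable, and one must either deploy the delicate semidefiniteness machinery of \cref{lem2p5} directly on $\mathcal{D}_2$ or, as above, route the argument through the already-established uniqueness of the almost antistabilizing solution of \eqref{dare-b}. In the latter approach one should also verify that \emph{every} almost stabilizing solution of $\mathcal{D}_2$ is nonsingular, so that the map $Z\mapsto -Z^{-1}$ is legitimate for all of them; the remaining steps are routine given the transfer of \cref{sec2,sec3p1} to the data $(A^{H},H,G)$.
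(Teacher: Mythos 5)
Your proposal is correct and, for most of the lemma, follows the same route as the paper: part~(i) is the same reduction of $\mathcal{D}_2$ to the DARE with data $(A^H,H,G)$ so that \cref{thm3p1} applies; the ``almost stabilizing'' step in part~(ii) is the identical eigenvector test on the Stein inequality $\mathcal{S}_{W_{Z_\infty}}(Z_\infty)\geq G$; and part~(iii) is the same application of \cref{lem2p6} after translating the controllability of $(A(I+Z_\infty H)^{-1},B)$ into the rank condition for $W_{Z_\infty}=(A(I+Z_\infty H)^{-1})^H$. Where you genuinely diverge is the positive-definiteness step in part~(ii): the paper builds an explicit factorization $Z_{k+1}=B_{k+1}R_{k+1}^{-1}B_{k+1}^H$ with $B_{k+1}=[B\ AB\ \cdots\ A^kB]$ and $R_{k+1}>0$, so that $Z_n$ inherits full rank from the controllability matrix, whereas you run the dual argument, nesting $\mathrm{Ker}(Z_{k+1})\subseteq\mathrm{Ker}(B^H)\cap (A^H)^{-1}\mathrm{Ker}(Z_k)$ and concluding that $\mathrm{Ker}(Z_k)$ is orthogonal to the partial controllability subspace. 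The two arguments are equivalent in content and give the same indices $k\geq n$ (resp.\ $k\geq n-1$); yours is a little lighter on bookkeeping, while the paper's explicit $B_kR_k^{-1}B_k^H$ form also records the range of each iterate, and incidentally yours handles the constant $R$ more cleanly (the paper's inequality ``$\geq BB^H$'' implicitly normalizes $R$, which you avoid by working with $B_0=BR^{-1/2}$).

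The one substantive difference is the uniqueness claim in part~(ii). The paper's proof simply asserts ``Consequently, $Z_\infty>0$ is the unique almost stabilizing solution'' after establishing $Z_\infty>0$ and $\rho(W_{Z_\infty})\leq 1$; no argument for uniqueness is actually supplied. Your proposed route through $Z\mapsto -Z^{-1}$ and the uniqueness of the almost antistabilizing solution of \eqref{dare-b} is plausible but, as you yourself flag, incomplete: it needs $A$ nonsingular (which the ambient subsection does assume) and, more seriously, it needs every almost stabilizing Hermitian solution of $\mathcal{D}_2$ to be nonsingular before the correspondence can be invoked, and you do not establish that. So on this point your write-up is more honest than the paper's but does not close the gap either; a self-contained argument would have to go through the machinery of \cref{lem2p5}/\cref{lem2p7} applied to $\mathcal{D}_2$, where the relevant kernel condition involves $H=C^HC$ rather than $G$, and is not an immediate consequence of the controllability of $(A,B)$. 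Everything else in your proposal is sound.
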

\begin{proof}
\par\noindent
\begin{enumerate}
\item[(i)] This statement follows from \cref{thm3p1} directly.
\item[(ii)] Suppose that the pair $(A,B)$ is controllable. It is well known that the matrix
$B_n := [B, AB, \cdots, A^{n-1}B]$ has full rank. Now, if $Z_0 = 0$, then $Z_1 = BB^H := B_1R_1^{-1}B_1^H$,
where $B_1 = B$ and $R_1 = I_m$ is the $m\times m$ identity matrix.
Thus we further see that
\begin{align*}
  Z_2 &= \mathcal{D}_2(Z_1) = AB_1R_1^{-1}B_1^H (I + H B_1R_1^{-1}B_1^H)^{-1}A^H + BB^H \\
&= BB^H + AB_1 (R_1 + B_1^H H B_1)^{-1}B_1^H A^H \\
&= \begin{bmatrix} B_1 & AB_1 \end{bmatrix}
\begin{bmatrix} R_1 & 0\\ 0 & R_1+B_1^H H B_1 \end{bmatrix}^{-1}
\begin{bmatrix} B_1^H \\ B_1^H A^H \end{bmatrix} := B_2 R_2^{-1} B_2,
\end{align*}
with $B_2 := [B_1\ AB_1] = [B\ AB]$ and $R_2:= R_1 \oplus (R_1+B_1^H H B_1) > 0$.
Inductively, we obtain $Z_{k+1} = B_{k+1} R_{k+1}^{-1}B_{k+1}^H$,
where $B_{k+1} := [B_1\ AB_k] = [B\ AB\ A^2B\ \cdots  \ A^kB]$ and
$R_{k+1} := R_1\oplus (R_k + B_k^H H B_k)  >0$ for $k\geq 1$.
In particular, when $k=n-1$, we see that $Z_n = B_n R_n^{-1}B_n^H$ with $\mathrm{rank}(B_n) = n$.
Let $B_n^H = Q \begin{bmatrix} L^H \\ 0 \end{bmatrix}$ be the QR-decomposition with $Q\in \mathbb{C}^{mn\times mn}$ and $L\in \mathbb{C}^{n\times n}$
being unitary and nonsingular lower-triangular matrices, respectively. This implies that $Z_n = [L\ 0] (Q^H R_n^{-1}Q)\begin{bmatrix} L^H \\ 0 \end{bmatrix} > 0$.
Furthermore, continuing this process, it is still valid that $Z_k > 0$ for each $k\geq n$.
For the case that $Z_0 = G$, the result can be shown similarly.
Since $\{Z_k\}_{k=0}^\infty$ is a nondecreasing sequence, we see that $Z_k \geq Z_{k_0} > 0$ for some positive integer $k_0$ and
thus $Z_\infty = \lim\limits_{k\rightarrow \infty} Z_k \geq Z_{k0} > 0$.
Let $W_{Z_\infty} := (I+HZ_\infty)^{-1}A^H$ be the closed-loop matrix associated with $Z_\infty$ and we shall prove that $\rho (W_{Z_\infty}) \leq 1$.
If not, there is a $\lambda\in \mathbb{C}$ and a nonzero $y\in \mathbb{C}^n$ such that $|\lambda| = \rho (W_{Z_\infty}) > 1$ and $W_{Z_\infty} y = \lambda y$.
Note that the matrix $Z_\infty > 0$ also satisfies
\begin{equation} \label{stein-Ginf}
  \mathcal{S}_{W_{Z_\infty}} (Z_\infty) = W_{Z_\infty}^H (Z_\infty + Z_\infty H Z_\infty) W_{Z_\infty} + BR^{-1}B^H \geq BB^H.
  \end{equation}
Thus this implies $0 > (1 - |\lambda|^2)(y^H Z_\infty y) \geq y^H (BR^{-1}B^H)y$,
which leads to a contradiction with the positive semiidefiniteness of $G=BR^{-1}B^H$.
Consequently, $Z_\infty > 0$ is the unique almost stabilizing solution of the dual DARE \eqref{DY}.
\item[(iii)] Let $W_{Z_\infty}$ be the closed-loop matrix defined previously.
Since $(W_{Z_\infty}^H, B)$ is controllable by the assumption, it follows that
$\mathrm{rank} \begin{bmatrix} W_{Z_\infty} - \lambda I \\ B^H \end{bmatrix} = n$ for all $\lambda \in \sigma (W_{Z_\infty})$.
Thus, from \cref{lem2p6} and \eqref{stein-Ginf}, we obtain $\rho (W_{Z_\infty}) < 1$ and $Z_\infty > 0$ immediately.
\end{enumerate}
\end{proof}
Based on (i) and (ii) of \cref{thm3p4} and \eqref{dualcloseloop}, the minimal negative semidefinite solution of the DARE (1.1) can
be obtained from the limit of $\{Z_k\}_{k=0}^\infty$, which is summarized in the following theorem.
\begin{theorem} \label{thm3p7}
Assume that $D_\geq^{(2)}\cap\mathbb{N}_n\neq \emptyset$. If $(A,B)$ is controllable, then FPI \eqref{fpi-nsd2} generates a nondecreasing sequence of positive semidefinite matrices $\{{Z}_k\}_{k=0}^\infty$ with ${Z}_0=0$, which converges at least R-linearly to a positive definite matrix $Z_\infty$ with the rate of convergence
  \[  \limsup_{k\rightarrow \infty} \sqrt[k]{\|{Z}_k -Z_\infty\|}\leq \rho_{\mathbb{D}} (T_{Z_\infty}^{-1})^2 < 1. \]
Furthermore, the minimal negative semidefinite solution of DARE~\eqref{dare} can be obtained by  $X_{-,m}=-Z_\infty^{-1}$.
\end{theorem}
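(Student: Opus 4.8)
The plan is to read Theorem~\ref{thm3p7} off as a specialization of \cref{thm3p4}, combined with a transfer of the R-linear rate of \cref{thm3p1} to the second dual DARE \eqref{DY}, and closed out by the inversion duality \eqref{invCLM}--\eqref{dual2}. The crucial observation is that \eqref{DY}, namely $Y=AY(I+HY)^{-1}A^H+G$, is itself a DARE of the form \eqref{dare-b} with the data $(A,G,H)$ replaced by $(A^H,H,G)$: its constant term $G=BR^{-1}B^H\ge 0$, and its closed-loop matrix at a solution $Y$ is $W_Y:=(I+HY)^{-1}A^H$. Consequently every result of \cref{sec3p1} applies verbatim to \eqref{DY}.

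First I would settle monotone convergence and positive definiteness. Since $Z_0=0$ satisfies $0\le Z_0\le G$, part~(i) of \cref{thm3p4} shows that the FPI \eqref{fpi-nsd2} produces a nondecreasing sequence in $\mathbb{N}_n$ converging to the minimal element $Z_\infty$ of $D_\geq^{(2)}\cap\mathbb{N}_n$. Controllability of $(A,B)$ then lets me invoke part~(ii) of \cref{thm3p4}, giving $Z_k>0$ for $k\ge n$ and, in the limit, $Z_\infty>0$ as the unique almost stabilizing solution of \eqref{DY}, so that $\rho(W_{Z_\infty})\le 1$.

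For the rate I would apply \cref{thm3p1} directly to the DARE \eqref{DY}: since $D_\geq^{(2)}\cap\mathbb{N}_n\neq\emptyset$ and the initial guess $Z_0=0$ obeys $0\le Z_0\le G$, that theorem yields R-linear convergence to $Z_\infty$ with rate $\rho_{\mathbb{D}}(W_{Z_\infty})^2$, which is strictly less than $1$ because $\rho_{\mathbb{D}}$ records only the eigenvalues lying strictly inside $\mathbb{D}$ while $\rho(W_{Z_\infty})\le 1$. Finally, the spectral identity \eqref{dualcloseloop} taken with $Y=Z_\infty$ and $X=-Z_\infty^{-1}$ gives $\sigma(W_{Z_\infty})=\sigma(T_{X_{-,m}}^{-1})$, which recasts the rate in the claimed form $\rho_{\mathbb{D}}(T_{Z_\infty}^{-1})^2$.

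It then remains to identify the limit. As $Z_\infty>0$ is a nonsingular solution of \eqref{DY}, the equivalence \eqref{invCLM}--\eqref{dual2} guarantees that $X_{-,m}:=-Z_\infty^{-1}<0$ is a nonsingular solution of \eqref{dare}. For minimality I would use that $X\mapsto -X^{-1}$ is order preserving between the negative definite solutions of \eqref{dare} and the positive definite solutions of \eqref{DY}. The delicate step, and the one I expect to require the most care, is upgrading minimality from negative definite to all negative semidefinite solutions, since singular solutions have no image under inversion. I would resolve it by noting that any negative semidefinite solution $X$ with $X\le X_{-,m}$ is automatically negative definite (being dominated by a negative definite matrix); its dual image $-X^{-1}\ge 0$ then satisfies both $-X^{-1}\le Z_\infty$ by order preservation and $-X^{-1}\ge Z_\infty$ by minimality of $Z_\infty$, forcing $X=X_{-,m}$ and hence establishing that $X_{-,m}=-Z_\infty^{-1}$ is the minimal negative semidefinite solution.
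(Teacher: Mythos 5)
Your overall route is the same as the paper's: \cref{thm3p7} is presented there as a direct consequence of parts (i) and (ii) of \cref{thm3p4}, of \cref{thm3p1} applied to the second dual DARE \eqref{DY} (which, as you observe, is itself of the form \eqref{dare-b} with data $(A^H,H,G)$), and of the spectral identity \eqref{dualcloseloop}. Your treatment of the monotone convergence, the positive definiteness of the limit, and the R-linear rate is in order.

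The gap is in your final minimality step. What you actually prove is that no negative semidefinite solution lies strictly below $-Z_\infty^{-1}$, i.e.\ that $-Z_\infty^{-1}$ is a \emph{minimal element} of the negative semidefinite solution set in the order-theoretic sense. But $X_{-,m}$ in this paper denotes the \emph{minimum}: $X_{-,m}\le S$ for every negative semidefinite solution $S$ (compare the usage for $X_{+,m}$ in \cref{thm3p1} and for $\widehat X$ in the introduction). Your argument does not exclude negative semidefinite solutions that are incomparable to $-Z_\infty^{-1}$. The correct chain for a negative definite solution $X$ runs in the opposite direction from the one you wrote: $-X^{-1}>0$ solves \eqref{DY}, hence $-X^{-1}\ge Z_\infty$ by minimality of $Z_\infty$ in $D_\geq^{(2)}\cap\mathbb{N}_n$, and since inversion is order reversing on positive definite matrices this gives $-X\le Z_\infty^{-1}$, i.e.\ $X\ge -Z_\infty^{-1}$. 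That settles the minimum property over all negative \emph{definite} solutions; the case of a singular negative semidefinite solution still requires a separate argument (such a solution has no image under $X\mapsto -X^{-1}$), and neither your proposal nor, for that matter, the paper's one-sentence justification addresses it.
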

%

Finally, the relationship between the almost stabilizabing solution and positive semidefinite extremal solutions of DARE \eqref{dare-b}
is summarized in the following proposition without proof.
\begin{proposition}
If $X_s$ is the unique almost stabilizabing solution of the DARE \eqref{dare-b}, then
\begin{enumerate}
 \item[(i)] $X_{+,M}$ exists and $X_s = X_{+,M} \geq 0$.
  \item[(ii)] $X_{+,m}$ exists. Moreover, $X_{+,M}=X_{+,m}$ if $\rho(T_{X_{+,m}})\leq 1$, and $X_{+,m}\neq X_{+,M}$ if $\rho(T_{X_{+,m}})>1$.
 \end{enumerate}
\end{proposition}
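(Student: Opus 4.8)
The plan is to derive both parts entirely from the structural machinery already in place, reducing everything to the uniqueness of the almost stabilizing solution. The hypothesis that $X_s$ is the \emph{unique} almost stabilizing solution is precisely statement (iii) of \cref{thm3p3}, so by the equivalence proved there the pair $(A,B)$ is stabilizable and, in particular, statement (ii) holds: there is an $X_\star\in\mathbb{H}_n$ with $\rho(T_{X_\star})<1$.

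For part (i), I would feed this $X_\star$ into \cref{thm3p2}. Its conclusion (ii) produces the nonincreasing FPI sequence whose limit $X_{+,M}$ exists, lies in $\mathbb{N}_n$, and is the maximal element of $\mathcal{R}_\geq\cap\mathbb{N}_n$, which the proof of \cref{thm3p2} identifies with the maximal element of the solution set $\mathcal{R}_=$. Since $X_s$ is by hypothesis the almost stabilizing solution, and the almost stabilizing solution is itself the maximal element of $\mathcal{R}_=$, uniqueness of a maximal element forces $X_s=X_{+,M}\ge 0$, settling (i).

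For part (ii), existence of $X_{+,m}$ follows from \cref{thm3p1} once I observe that $\mathcal{R}_\ge\cap\mathbb{N}_n\ne\emptyset$: indeed $X_{+,M}=\mathcal{R}(X_{+,M})\ge 0$ already belongs to this set. \cref{thm3p1} then yields the minimal element $X_{+,m}$ as the limit of the monotone FPI started from some $0\le X_0\le H$. The one step I expect to require genuine care, and which I regard as the main obstacle, is to certify that $X_{+,m}$ is an actual solution, i.e. $X_{+,m}\in\mathcal{R}_=$, so that $T_{X_{+,m}}$ and the phrase ``almost stabilizing'' are meaningful for it. This is where I would invoke \cref{thm2p3}: because $X_{+,m}\in\mathcal{R}_\ge$ we get $R+B^HX_{+,m}B>0$, hence $X_{+,m}\in\mbox{dom}(\mathcal{R})$ and $\mathcal{R}$ is continuous there; passing to the limit in $X_{k+1}=\mathcal{R}(X_k)$ then gives $X_{+,m}=\mathcal{R}(X_{+,m})$.

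With $X_{+,m}\in\mathcal{R}_=$ available, the dichotomy is immediate. If $\rho(T_{X_{+,m}})\le 1$, then $X_{+,m}$ is an almost stabilizing solution, so the uniqueness in part (i) (equivalently \cref{thm3p3}(iii)) forces $X_{+,m}=X_s=X_{+,M}$. If instead $\rho(T_{X_{+,m}})>1$, then $X_{+,m}$ cannot equal $X_{+,M}=X_s$, for otherwise $\rho(T_{X_{+,m}})=\rho(T_{X_s})\le 1$, a contradiction; hence $X_{+,m}\ne X_{+,M}$. Thus the only nonroutine ingredient is the continuity/limit argument identifying the minimal supersolution with a genuine solution, while everything else is bookkeeping with the maximality and uniqueness already established.
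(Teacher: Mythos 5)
Your proof is correct, and since the paper states this proposition explicitly without proof, your assembly of \cref{thm3p3} (to get stabilizability from uniqueness), \cref{thm3p2}(ii)--(iii) (existence and almost-stability of $X_{+,M}$), and \cref{thm3p1} (existence of $X_{+,m}$) is exactly the derivation the authors intend. The only remark worth adding is that the continuity step identifying the limit $X_{+,m}$ as a genuine fixed point is even easier than you suggest: since $R>0$ and $X_{+,m}\geq 0$, the matrix $R+B^HX_{+,m}B$ is automatically positive definite without appealing to \cref{thm2p3}.
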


\section{Acceleration of fixed-point iteration} \label{sec4}
In previous sections we have addressed the convergence of fixed-point iteration
$ X_{k+1} = \mathcal{R}(X_k)$, $k \geq 0$,
for the existence of the extremal solutions to DARE \eqref{dare-b}, in which $X_0\in \mathbb{H}_n$ is some appropriate initial guess.
Since the FPI is usually linearly convergent, a numerical method of higher order of convergence is always required in the practical computation
and many real-life applications.
In this section, for any positive integer $r > 1$, we will revisit an accelerated FPI (AFPI) of the form
\begin{subequations} \label{afpi-Xk}
\begin{align}
  \widehat{X}_{k+1} &= \mathcal{R}^{(r^{k+1} - r^{k})}(\widehat{X}_k),\quad k\geq 1,\\
  \widehat{X}_{1} &=\mathcal{R}^{(r)}(\widehat{X}_0),\quad k=1
\end{align}
\end{subequations}
 with $\widehat{X}_{0}=X_0$, for solving the numerical solutions of DARE \eqref{dare-b}.
Here $\mathcal{R}^{(\ell)}(\cdot)$ denotes the composition of the operator $\mathcal{R}(\cdot)$ itself for $\ell$ times, where $\ell \geq 1$ is a positive integer.
Theoretically, the iteration of the form \eqref{afpi-Xk} is equivalent to the formula
\begin{align} \label{fpi-Zk}
    \widehat{X}_{k} &= \mathcal{R}^{(r^k)}(\widehat{X}_0),\quad k\geq 1,
\end{align}
with $\widehat{X}_0 = X_0$, and we see that
\begin{align}\label{acc}
 \widehat{X}_k = X_{r^{k}}
\end{align}
for each $k\geq 1$. Therefore, it is an interesting and chanllenging issue to make sure that the explicit expression of $\widehat{X}_k$ is of the form~\eqref{acc}
for each $k\geq 1$.
In the following sections we will show that the AFPI algorithm is more efficient than the iteration \eqref{fpi-Zk},
or even some Newton-type methods \cite{g98,z.c.l17}, for solving the extremal solutions of the DARE \eqref{dare-b}.

\subsection{Equivalent formulation of the fixed-point iteration} \label{sec4p1}
The following definition {\color{black} modifies} the semigroup property of the iteration associated a binary operator.
\begin{definition}~\cite{l.c20}\label{def1}
Let $\mathbb{K}_n \subseteq \mathbb{C}^{p\times q}$ and $F:\mathbb{K}_n \times \mathbb{K}_n \rightarrow \mathbb{K}_n$ be a binary matrix operator,
where $p$ and $q$ are positive integers.
We call that an iteration
\begin{equation*}
\mathbb{X}_{k+1}=F(\mathbb{X}_k,\mathbb{X}_0), \quad k\geq 0,
\end{equation*}
has the semigroup property if the operator $F$ satisfies the following associative rule:
\begin{align*}
F(F(Y,Z),W)=F(Y,F(Z,W))
\end{align*}
for any $Y,Z$ and $W$ in $\mathbb{K}_n$.
\end{definition}
It is interesting to point out that the sequence $\{{\mathbb{X}}_{k}\}$ satisfies the so-called discrete flow property~\cite{l.c20}, that is,
\begin{align}\label{DF}
{\mathbb{X}}_{i+j+1}=F({\mathbb{X}}_{i},{\mathbb{X}}_{j})
\end{align}
for any nonnegative integers $i$ and $j$. {\color{black} Here the subscript of \eqref{DF} is an equivalent adjustment to the original formula~\cite[Theorem 3.2]{l.c20}.}

For the FPI defined by $X_{k+1} = \mathcal{R}(X_k)$, in which $X_0\in \mathbb{H}_n$ is an initial matrix and
the operator $\mathcal{R}(\cdot)$ is defined by \eqref{dare-b},
it is shown in \cite{l.c18} that the fixed-point iteration can be rewritten as the following formulation
 \begin{equation}\label{fpi-eqform}
 X_{k+1} = \mathcal{R}^{(k)}(\mathcal{R}(X_0))= \mathcal{R}^{(k+1)}(X_0) = H_{k}+A_{k}^H X_0 (I+ G_{k} X_0)^{-1} A_{k},
 \end{equation}
where the sequence of matrices $\{ (A_k, G_k, H_k)\}_{k=0}^\infty$ is generated by the following iteration
\begin{equation} \label{opF}
\mathbb{X}_{k+1} = F(\mathbb{X}_k, \mathbb{X}_0) :=
\begin{bmatrix} A_0\Delta_{G_{k},{H_0}}A_{k}\\G_0+A_0\Delta_{G_{k},{H_0}}G_{k}A_0^H\\H_{k}+A_{k}^H H_0\Delta_{G_{k},{H_0}}A_{k}\end{bmatrix},
\end{equation}
with $\mathbb{X}_k := \left[ A_k^H\  G_k \ H_k \right]^H$ and $\mathbb{X}_0:= \left[ A^H\  G\ H \right]^H$ for each $k\geq 0$,
provided that the matrices $\Delta_{G_i, H_j} := (I+G_i H_j)^{-1}$ exists for all $i,j\geq 0$.
Notice that $F: \mathbb{K}_n \times \mathbb{K}_n \rightarrow \mathbb{K}_n$ is a binary operator with
$\mathbb{K}_n:= \mathbb{C}^{n\times n}\times \mathbb{H}_n \times \mathbb{H}_n$.
Moreover, it has been shown in Theorem 4.2 of \cite{l.c20} that the iteration \eqref{opF} has the semigroup property and thus satisfies \eqref{DF}.
Based on the equivalent expression of the FPI \eqref{fpi-eqform}, we shall develop an accelerated FPI for computing the extremal solutions to the DARE \eqref{dare-b}.
{\color{black}
\begin{remark}\label{ZG}
From the FPI~\eqref{fpi-nsd2} and \eqref{opF}, it is worth mentioning that $Z_k=G_k$ for each $k\geq 0$ when $Z_0=G_0=G$. Analogously, it follows from the discrete flow property~\eqref{DF} and iteration \eqref{opF} that $X_k=H_k$ for each $k\geq 0$ if $X_0=H_0=H$.
\end{remark}
}
 \subsection{The accelerated fixed-point iteration} \label{sec4p2}
In this section, we will mainly aim at some efficient ways for generating the sequence of matrices $\{(A_k, G_k, H_k)\}_{k=0}^\infty$
presented in the FPI \eqref{fpi-eqform}, where it may converge linearly.
 For the sake of simplicity, the operator $\mathbf{F}_\ell: \mathbb{K}_n \rightarrow \mathbb{K}_n$ is defined recursively by
 \begin{equation} \label{bfF}
   \mathbf{F}_{\ell+1} (\mathbb{X}) = F(\mathbb{X}, \mathbf{F}_\ell (\mathbb{X})),\quad \ell\geq 1,
 \end{equation}
 with $\mathbf{F}_1 (\mathbb{X}) = \mathbb{X}$ for all $\mathbb{X}\in \mathbb{K}_n$ and
$F(\cdot, \cdot)$ being defined by \eqref{opF}.
Furthermore, if we let $\mathbf{X}_k := \left[\mathbf{A}_k^H\ \mathbf{G}_k\ \mathbf{H}_k \right]^H\in \mathbb{K}_n$ for $k\geq 0$ and
 $\mathbf{X}_0 := \mathbb{X}_0$ be defined as in \eqref{opF}, then the sequence $\{\mathbf{A}_k, \mathbf{G}_k, \mathbf{H}_k \}$ generated by the iteration
 \begin{equation} \label{afpi-r2}
   \mathbf{X}_{k+1} = F(\mathbf{X}_k, \mathbf{X}_k) = \mathbf{F}_2 (\mathbf{X}_k),\quad k\geq 0,
 \end{equation}
 which is equivalent to the doubling or strctured doubling algorithms \cite{a77,c.f.l.w04}.
Indeed, according the semigroup and discrete flow property~\eqref{DF},
we have $\mathbf{A}_k = A_{2^{k}-1}$, $\mathbf{G}_k = G_{2^{k}-1}$ and $\mathbf{H}_k = H_{2^{k}-1}$ for each $k\geq 0$.
 That is, under the iteration \eqref{afpi-r2}, the sequence of matrices $\{(A_k, G_k, H_k)\}_{k=0}^\infty$ proceeds rapidly with their subscripts being the
 exponential numbers of base number $r=2$.
From the theoretical point of view, staring with a suitable matrix $X_0\in \mathbb{H}_n$, the sequence $\{X_k\}_{k=0}^\infty$ generated by \eqref{fpi-eqform} might also converge rapidly to its limit, if the limit exists.

Analogously, when the base number is $r=3$, it is suggested in \cite{l.c20} to consider an accelerated iteration of the form
 \begin{equation*} 
   \mathbf{X}_{k+1} = F(\mathbf{X}_k, F(\mathbf{X}_k, \mathbf{X}_k)) = \mathbf{F}_3 (\mathbf{X}_k),
   \quad k\geq 0,
 \end{equation*}
with $\mathbf{X}_0 = \mathbb{X}_0$, and we have $\{\mathbf{A}_k, \mathbf{G}_k, \mathbf{H}_k \}
= \{A_{3^{k}-1}, G_{3^{k}-1}, H_{3^{k}-1}\}$ for each $k\geq 0$.

Recently, for any positive integer $r>1$, Lin and Chiang proposed an efficient iterative method for generating
the sequence $\{(A_k,G_k,H_k)\}_{k=0}^\infty$ with order $r$ of R-convergence, provided that the operator $F(\cdot, \cdot)$ in \eqref{opF}
is well-defined, see, e.g., Algorithm 3.1 in \cite{l.c20}. Theoretically, this algorithm utilizes the following accelerated iteration
\begin{equation}\label{afpi-r}
    \mathbf{X}_{k+1} = \mathbf{F}_r (\mathbf{X}_k),\quad k\geq 0,
\end{equation}
with $\mathbf{X}_0:= \left[A^H\ G\ H \right]^H$ and $\mathbf{F}_r (\cdot)$ being the operator defined by \eqref{bfF},
for constructing $\mathbf{A}_k = A_{r^{k}-1}$, $\mathbf{G}_k = G_{r^{k}-1}$ and $\mathbf{H}_k = H_{r^{k}-1}$, respectively.
Therefore, combining \eqref{fpi-eqform} and \eqref{afpi-r}, we obtain the pseudocode of AFPI summarized in \cref{alg-afpi}.
\begin{algorithm}
  \caption{The Accelerated Fixed-Point Iteration with $r$ (AFPI($r$)) for solving
$$
X=H_0+A_0^H X (I_n+G_0 X)^{-1} A_0.
$$
}
  \label{alg-afpi}
\begin{algorithmic}
\Require $\mathbf{A}_0=A_0\in \mathbb{C}^{n\times n}$, $\mathbf{G}_0=B_0 R_0^{-1}B_0^H\geq 0$, $\mathbf{H}_0=H_0 \geq 0$, $(A_0,B_0)$ is stabilizable, initial matrix $\widehat{X}_0\in \mathbb{H}_n$ and $r > 1$.
\Ensure the maximal positive semidefinite solution $\widehat{X}_{k_1}$ and the minimal positive semidefinite solution $\mathbf{H}_{k_2}$.

\For{$k =0,1,2,\ldots$}
  \State $A_k^{(1)} = \mathbf{A}_k$,  $G_k^{(1)} = \mathbf{G}_k$,  $H_k^{(1)} = \mathbf{H}_k$;
  \For{$l = 1,2,\ldots,  r-2$}
    \State $A_k^{(l+1)} = A_k^{(l)} (I + \mathbf{G}_k H_k^{(l)})^{-1}\mathbf{A}_k$;
    \State $G_k^{(l+1)} = G_k^{(l)} + A_k^{(l)} (I + \mathbf{G}_k H_k^{(l)})^{-1}\mathbf{G}_k (A_k^{(l)})^H$;
    \State $H_k^{(l+1)} = \mathbf{H}_k + \mathbf{A}_k^H H_k^{(l)}  (I + \mathbf{G}_k H_k^{(l)})^{-1}\mathbf{A}_k$;
  \EndFor
  \State $\mathbf{A}_{k+1} = A_k^{(r-1)} (I + \mathbf{G}_k H_k^{(r-1)})^{-1}\mathbf{A}_k$;
  \State $\mathbf{G}_{k+1} = G_k^{(r-1)} + A_k^{(r-1)} (I + \mathbf{G}_k H_k^{(r-1)})^{-1}\mathbf{G}_k (A_k^{(r-1)})^H$;
  \State $\mathbf{H}_{k+1} = \mathbf{H}_k + \mathbf{A}_k^H H_k^{(r-1)} (I + \mathbf{G}_k H_k^{(r-1)})^{-1}\mathbf{A}_k$;
  \State $\widehat{X}_{k+1} = \mathbf{A}_{k+1}^H \widehat{X}_0 (I + \mathbf{G}_{k+1} \widehat{X}_0)^{-1}\mathbf{A}_{k+1} + \mathbf{H}_{k+1}$;
  \If{$\widehat{X}_{k_1}$ and $\mathbf{H}_{k_2}$ satisfy the stoping criterion for some positive  $k_1,k_2$}
    \State \Return the positive semidefinite matrices $\widehat{X}_{k_1}$ and $\mathbf{H}_{k_2}$;
  \EndIf
\EndFor
\end{algorithmic}
\end{algorithm}

Due to the property presented in~\eqref{acc}, it seems that the superlinear
convergence of the AFPI algorithm would be expected in practical computation. In our numerical experiments, we adopted $\mathbf{A}_0 = A$, $B_0 = B$, $R_0 = R$ and $\mathbf{H}_0 = H$ in \cref{alg-afpi}
for computing the positive semidefinite extremal solutions of the DARE \eqref{dare}. On the other hand,
the posive semidefinite extremal solutions of the dual DARE \eqref{dual} were computed if we adopted $\mathbf{A}_0 = \widehat{A}$, $B_0 = \widehat{B}$,
$R_0 = \widehat{R}$ and $\mathbf{H}_0 = \widehat{H}$ defined by \eqref{dual3}, respectively, as the initial data in \cref{alg-afpi}.

\subsection{Convergence analysis of the AFPI} \label{sec4p3}
As for \cref{alg-afpi}, worth mentioning is that it can be reduced to SDA iteration when $r=2$~\cite{c.f.l.w04}.
The following theorem guarantees, under the same sufficient conditions as in \cref{thm3p2} and the equivalent conditions presented in \cref{thm3p3},
the R-superlinear convergence of $\{\widehat{X}_k\}_{k=0}^\infty$ and $\{\mathbf{H}_k\}_{k=0}^\infty$ generated by \cref{alg-afpi}.
\begin{theorem}\label{thm4p2}
Let the sequences $\{\mathbf{H}_k\}_{k=0}^\infty$ and $\{\widehat{X}_k\}_{k=0}^\infty$
be generated by \cref{alg-afpi} with $\mathbf{A}_0 = A$, $\mathbf{G}_0 = G$ and $\mathbf{H}_0 = H$, respectively.
If the hypotheses of \cref{thm3p2} are satisfied, then
\begin{enumerate}
  \item[(i)] $\{\mathbf{H}_k\}_{k=0}^\infty$ converges at least R-superlinearly to $X_{+,m}$ with the rate of convergence
  \begin{equation*} 
   \limsup_{k\rightarrow \infty} \sqrt[r^k]{\|\mathbf{H}_k-X_{+,m}\|}\leq \rho_{\mathbb{D}} (T_{X_{+,m}})^2<1.
  \end{equation*}
  \item[(ii)] $\{\widehat{X}_k\}_{k=0}^\infty$ converges at least R-superlinearly to $X_{+,M}$ with the rate of convergence
\begin{equation*}  
 \limsup_{k\rightarrow \infty} \sqrt[r^k]{\|\widehat{X}_k-X_{+,M}\|}\leq \rho (T_{X_{+,M}})^2,
\end{equation*}
provided that $\rho (T_{X_{+,M}})<1$.
\end{enumerate}
\end{theorem}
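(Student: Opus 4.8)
The plan is to reduce the claimed R-superlinear convergence of the two accelerated sequences to the already-established R-linear convergence of the plain FPI \eqref{fpi-psd}, by first identifying $\mathbf{H}_k$ and $\widehat{X}_k$ as the $r^k$-th iterates of suitable FPIs and then lifting the rate through an elementary subsequence estimate. First I would record the bridge between \cref{alg-afpi} and \eqref{fpi-psd}. By the semigroup property of \eqref{opF} together with the discrete flow property \eqref{DF}, the triples produced by the accelerated recursion \eqref{afpi-r} satisfy $\mathbf{A}_k = A_{r^k-1}$, $\mathbf{G}_k = G_{r^k-1}$ and $\mathbf{H}_k = H_{r^k-1}$ for every $k\geq 0$. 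Substituting these into the equivalent formulation \eqref{fpi-eqform} shows that the output of \cref{alg-afpi} obeys $\widehat{X}_k = \mathcal{R}^{(r^k)}(\widehat{X}_0) = X_{r^k}$, which is exactly \eqref{acc}, where $\{X_j\}_{j=0}^\infty$ is the FPI launched from $X_0 = \widehat{X}_0$. Likewise, setting $\widehat{X}_0 = 0$ in \eqref{fpi-eqform} annihilates the second summand, giving $X_{j+1}=H_j$ (cf.\ \cref{ZG}); hence $\mathbf{H}_k = H_{r^k-1} = X_{r^k}$, where now $\{X_j\}$ is the FPI started from $X_0 = 0$.

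Next I would import the linear rates. Under the hypotheses of \cref{thm3p2}, part~(i) of that theorem guarantees $\mathcal{R}_\geq\cap\mathbb{N}_n\neq\emptyset$, so \cref{thm3p1} applies to the FPI from the admissible initial matrix $X_0 = 0$ (which satisfies $0\leq X_0\leq H$) and yields $\limsup_{j\to\infty}\sqrt[j]{\|X_j - X_{+,m}\|}\leq \rho_{\mathbb{D}}(T_{X_{+,m}})^2 < 1$. For the second sequence, \cref{thm3p2}(ii) gives, for the FPI started from the initial matrix $\widehat{X}_0 = \mathcal{S}_{T_{X_\star}}^{-1}(H_{X_\star})$, the estimate $\limsup_{j\to\infty}\sqrt[j]{\|X_j - X_{+,M}\|}\leq \rho(T_{X_{+,M}})^2$ whenever $\rho(T_{X_{+,M}})<1$.

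The final step is an elementary subsequence lemma. If $\limsup_j\|X_j - X_*\|^{1/j}\leq\sigma$, then for every $\varepsilon>0$ there is an $N$ with $\|X_j - X_*\|\leq(\sigma+\varepsilon)^j$ for all $j\geq N$; evaluating along the subsequence $j = r^k$ gives $\|X_{r^k}-X_*\|^{1/r^k}\leq\sigma+\varepsilon$ once $r^k\geq N$, whence $\limsup_k\|X_{r^k}-X_*\|^{1/r^k}\leq\sigma$ by letting $\varepsilon\to 0$. Applying this with $X_* = X_{+,m}$ and $\sigma = \rho_{\mathbb{D}}(T_{X_{+,m}})^2$ to $\mathbf{H}_k = X_{r^k}$ proves (i), and with $X_* = X_{+,M}$ and $\sigma = \rho(T_{X_{+,M}})^2$ applied to $\widehat{X}_k = X_{r^k}$ proves (ii); both conclusions are precisely the R-superlinear convergence of order $r$ in \cref{def-Rconv}(b).

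The main obstacle I anticipate is the bookkeeping in the first step: one must apply the index shift $X_{j+1}=H_j$ and the identity $\mathbf{A}_k=A_{r^k-1}$ consistently so that both accelerated sequences land exactly on the subsequence $\{X_{r^k}\}$ of the correct FPI rather than on $\{X_{r^k-1}\}$ or $\{X_{r^k+1}\}$, and so that the admissibility hypotheses of \cref{thm3p1} and \cref{thm3p2} are genuinely met by the respective initial data. Once this identification is pinned down, the rate analysis is a routine consequence of the linear estimates already proven and the subsequence inequality above, and the powers of $\sigma$ transfer verbatim because $r^k\cdot\tfrac{1}{r^k}=1$ leaves the base unchanged.
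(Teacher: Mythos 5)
Your proposal is correct and follows essentially the same route as the paper's (very terse) proof: identify $\mathbf{H}_k$ and $\widehat{X}_k$ with the $r^k$-indexed subsequence of the plain FPI via the semigroup/discrete-flow property, then transfer the R-linear rates of \cref{thm3p1} and \cref{thm3p2} through the elementary $j\mapsto r^k$ subsequence estimate. Your version simply makes explicit the index bookkeeping and the subsequence lemma that the paper leaves as ``straightforward.''
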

\begin{proof}
From the AFPI \eqref{afpi-r} with $r>1$, the sequence $\{\mathbf{X}_{k}\}_{k=0}^\infty$ generated by \cref{alg-afpi} has the semigroup property
and thus satisfies the discrete flow property \eqref{DF}. With the construction of these two sequences, we see that
$\mathbf{H}_{k}={H}_{r^{k}-1}$ and $\widehat{X}_k = \mathcal{R}^{(r^{k}-1)} (X_0)$ for each $k\geq 0$,
see, e.g., Remark 4.1 of \cite{l.c18}.
The proof is straightforward from \cref{thm3p1} and \cref{thm3p2}.
\end{proof}

In \cref{thm4p2}, there is no further conclusion for the sequence $\{\mathbf{G}_k\}_{k=0}^\infty$ produced by \cref{alg-afpi}
under the stabilizability assumption. As mentioned in \cref{thm3p4} and \cref{ZG}, this sequence will also converge at least R-superlinearly under the controllability of the pair $(A,B)$,
which is stated without proof as follows.
\begin{corollary} \label{cor4p3}
Let $\{\mathbf{G}_k\}_{k=0}^\infty$, $\{\mathbf{H}_k\}_{k=0}^\infty$ and $\{\widehat{X}_k\}_{k=0}^\infty$ be sequences
generated by \cref{alg-afpi} with $\mathbf{A}_0 = A$, $\mathbf{G}_0 = G$ and $\mathbf{H}_0 = H$, respectively.
If $(A,B)$ is controllable and $D_\geq^{(2)}\cap\mathbb{N}_n\neq \emptyset$, then the following statements hold:
\begin{enumerate}
  \item[(i)]
   $\mathbf{G}_k=Z_{r^k-1}$ for $k\geq 0$, where the sequence $\{Z_k\}_{k=0}^\infty$ is generated by \eqref{fpi-nsd2} with $\mathbf{G}_0=Z_0=G$. Moreover, $\{\mathbf{G}_k\}_{k=0}^\infty$ converges at least R-superlinearly to the unique
  almost stabilizing solution $G_\infty > 0$ of the dual DARE \eqref{DY} with the rate of convergence
  \[  \limsup_{k\rightarrow \infty} \sqrt[r^k]{\|\mathbf{G}_k - G_\infty \|} \leq \rho_{\mathbb{D}} (T^{-1}_{G_\infty})^2< 1.  \]
  If, in addition, $A$ is nonsingular, then $X_{-,m}=-G_\infty^{-1} \leq 0$ is the unique almost antistabilizing solution to the DARE \eqref{dare-b}.
  %
  \item[(ii)] $\{\mathbf{H}_k\}_{k=0}^\infty$ and $\{\widehat{X}_k\}_{k=0}^\infty$ have the same results as \cref{thm4p2}.
\end{enumerate}
\end{corollary}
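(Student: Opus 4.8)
The plan is to reduce the corollary to the already-established linear convergence of the fixed-point iteration for the dual DARE, together with the semigroup structure of \cref{alg-afpi}, and then to upgrade the linear rate to a superlinear one by passing to the accelerated subsequence. The first step is the identification $\mathbf{G}_k = Z_{r^k-1}$. By \cref{ZG}, initializing the FPI \eqref{fpi-nsd2} and the operator \eqref{opF} with $Z_0 = \mathbf{G}_0 = G$ forces $Z_k = G_k$ for every $k \ge 0$. On the other hand, the accelerated scheme \eqref{afpi-r} inherits the semigroup property and hence the discrete flow property \eqref{DF}, so that the iterates produced by \cref{alg-afpi} satisfy $\mathbf{G}_k = G_{r^k-1}$ for each $k \ge 0$; chaining the two yields $\mathbf{G}_k = Z_{r^k-1}$. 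Since $(A,B)$ is controllable and $D_\geq^{(2)}\cap\mathbb{N}_n\neq\emptyset$, I would then apply \cref{thm3p1} to the dual DARE \eqref{DY}, which is itself a DARE of the form \eqref{dare-b} with coefficient triple $(A^H,H,G)$ and constant term $G$. The admissible initial guess $0 \le Z_0 = G \le G$ lies at the boundary of the allowed range, so \cref{thm3p1} supplies a nondecreasing sequence $\{Z_k\}$ converging at least R-linearly to the minimal element $G_\infty := Z_\infty$ of $D_\geq^{(2)}\cap\mathbb{N}_n$, with rate $\sigma := \rho_{\mathbb{D}}(T_{G_\infty}^{-1})^2 < 1$ after the spectral translation \eqref{dualcloseloop}. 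The positive definiteness and almost-stabilizing property of $G_\infty$ are supplied by \cref{thm3p4}(ii).

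The next step is to convert this R-linear estimate into the claimed R-superlinear one by evaluating along the accelerated indices. Fix $\varepsilon > 0$; R-linearity provides an $N$ with $\|Z_n - G_\infty\| \le (\sigma+\varepsilon)^n$ for all $n \ge N$. Taking the $r^k$-th root along $n = r^k - 1$ gives
\begin{equation*}
\sqrt[r^k]{\|\mathbf{G}_k - G_\infty\|} = \|Z_{r^k-1} - G_\infty\|^{1/r^k} \le (\sigma+\varepsilon)^{(r^k-1)/r^k},
\end{equation*}
and letting $k \to \infty$ yields $\limsup_k \sqrt[r^k]{\|\mathbf{G}_k - G_\infty\|} \le \sigma + \varepsilon$. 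Since $\varepsilon$ is arbitrary, the limsup is bounded by $\sigma = \rho_{\mathbb{D}}(T_{G_\infty}^{-1})^2 < 1$, which is exactly the asserted R-superlinear rate of order $r$.

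For the remaining claim of (i), when $A$ is nonsingular I would invoke \cref{thm3p7} to identify the minimal negative semidefinite solution of \eqref{dare-b} as $X_{-,m} = -G_\infty^{-1} \le 0$. Its antistabilizing character follows from \eqref{dualcloseloop}: writing the dual closed-loop matrix $W_{G_\infty} := (I+HG_\infty)^{-1}A^H$, one has $\sigma(W_{G_\infty}) = \sigma(T_{X_{-,m}}^{-1})$, so $\rho(W_{G_\infty}) \le 1$ forces $\mu(T_{X_{-,m}}) \ge 1$; uniqueness is inherited from the uniqueness of $G_\infty$ in \cref{thm3p4}(ii) together with the one-to-one correspondence between nonsingular solutions of \eqref{dare-b} and \eqref{DY} recorded in \eqref{dual2}. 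Statement (ii) is then immediate: controllability of $(A,B)$ implies stabilizability, so by the equivalence (i)$\Leftrightarrow$(ii) of \cref{thm3p3} the hypotheses of \cref{thm3p2} hold, and \cref{thm4p2} applies verbatim to $\{\mathbf{H}_k\}_{k=0}^\infty$ and $\{\widehat{X}_k\}_{k=0}^\infty$.

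The algebra throughout is routine; the one point requiring genuine care is the identification chain $\mathbf{G}_k = G_{r^k-1} = Z_{r^k-1}$, which hinges on coupling \cref{ZG} correctly with the discrete flow property \eqref{DF} of the accelerated scheme, and on verifying that \cref{thm3p1} does supply the linear rate for the boundary initialization $Z_0 = G$ rather than only for $Z_0 = 0$ as in \cref{thm3p7}. Once this identification and the underlying linear rate are secured, the superlinear upgrade is the standard subsequence estimate displayed above.
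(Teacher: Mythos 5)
Your proposal is correct and follows exactly the route the paper indicates for this corollary (which it states without proof): the identification $\mathbf{G}_k=G_{r^k-1}=Z_{r^k-1}$ via \cref{ZG} and the discrete flow property \eqref{DF}, the application of \cref{thm3p1}, \cref{thm3p4} and \cref{thm3p7} to the dual DARE \eqref{DY} with the spectral translation \eqref{dualcloseloop}, and the standard subsequence estimate upgrading the R-linear rate to the R-superlinear one of order $r$. The points you flag as needing care (the admissibility of the boundary initialization $Z_0=G$ in \cref{thm3p4}(i), and the chaining of \cref{ZG} with \eqref{DF}) are handled correctly.
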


When $A$ is nonsingular, according to \cref{thm3p1-dual} , it is possible to characterize the R-superlinear convergence of \cref{alg-afpi},
with $\mathbf{A}_0 = \widehat{A}$, $\mathbf{G}_0 = \widehat{G}$ and $\mathbf{H}_0 = \widehat{H}$, for computing the negative semidefinite extremal
solutions of the DARE \eqref{dare-b}. It will be stated without proof as follows.
\begin{theorem}\label{thm4p4}
Let the sequences $\{\mathbf{H}_k\}_{k=0}^\infty$ and $\{\widehat{X}_k\}_{k=0}^\infty$ be generated by \cref{alg-afpi}
with $\mathbf{A}_0 = \widehat{A}$, $\mathbf{G}_0 = \widehat{G}$ and $\mathbf{H}_0 = \widehat{H}$
being defined by \eqref{dual3}, respectively. If the hypotheses of \cref{thm3p1-dual} are satisfied, then
\begin{enumerate}
  \item[(i)] $\{\mathbf{H}_k\}_{k=0}^\infty$ converges at least R-superlinearly to $-X_{-,M}$
  with the rate of convergence
  \begin{equation*} 
   \limsup_{k\rightarrow \infty} \sqrt[r^k]{\|\mathbf{H}_k+X_{-,M}\|}\leq \rho_{\mathbb{D}} (T_{X_{-,M}}^{-1})^2<1.
  \end{equation*}
  \item[(ii)] $\{\widehat{X}_k\}_{k=0}^\infty$ converges at least R-superlinearly to $-X_{-,m}$ with the rate of convergence
\begin{equation*}  
 \limsup_{k\rightarrow \infty} \sqrt[r^k]{\|\widehat{X}_k +{X}_{-,m}\|}\leq \rho (T_{{X}_{-,m}}^{-1})^2=\mu (T_{{X}_{-,m}})^{-2},
\end{equation*}
provided that $\rho (T_{{X}_{-,m}}^{-1})<1$ or $\mu (T_{{X}_{-,m}})>1$.
\end{enumerate}
\end{theorem}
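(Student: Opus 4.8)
The plan is to mirror the proof of \cref{thm4p2} almost verbatim, replacing the operator $\mathcal{R}(\cdot)$ by the first dual operator $\mathcal{D}_1(\cdot)$ of \eqref{dual} and the triple $(A,G,H)$ by $(\widehat{A},\widehat{G},\widehat{H})$ of \eqref{dual3}. The point of departure is that running \cref{alg-afpi} on the dual coefficients is, by construction, exactly the accelerated iteration \eqref{afpi-r} for the binary operator $F$ of \eqref{opF} assembled from $(\widehat{A},\widehat{G},\widehat{H})$. Since $\widehat{G},\widehat{H}\geq 0$ under the hypotheses of \cref{thm3p1-dual}, the factors $\Delta_{\widehat{G}_i,\widehat{H}_j}=(I+\widehat{G}_i\widehat{H}_j)^{-1}$ exist along the whole iteration, so $F$, and hence $\mathbf{F}_r$, is well defined and inherits the semigroup and discrete flow property \eqref{DF}.

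First I would record the index bookkeeping. As in the proof of \cref{thm4p2}, the discrete flow property \eqref{DF} yields $\mathbf{H}_k=\widehat{H}_{r^k-1}$ and $\widehat{X}_k=\mathcal{D}_1^{(r^k-1)}(\widehat{X}_0)$ for every $k\geq 0$, where $\widehat{H}_j$ denotes the $j$-th iterate of the dual FPI \eqref{fpi-nsd1} and $\mathcal{D}_1^{(j)}(\cdot)$ the $j$-fold composition of $\mathcal{D}_1$. By \cref{ZG} transcribed to the dual data, $\{\mathbf{H}_k\}_{k=0}^\infty$ is then a subsequence, sampled at indices growing like $r^k$, of the nondecreasing FPI sequence $\{Y_j\}_{j\geq 0}$ of \cref{thm3p1-dual}(i) started from $Y_0=0$, while $\{\widehat{X}_k\}_{k=0}^\infty$ is the corresponding subsequence of the nonincreasing FPI sequence of \cref{thm3p1-dual}(ii) started from its prescribed maximal initial matrix $\widehat{X}_0$. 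The closed-loop spectra are governed by the identity $\sigma((I+\widehat{G}Y)^{-1}\widehat{A})=\sigma(T_X^{-1})$ noted after \eqref{dual1}, which is what turns the original rates into their inverse-map versions $T_{X_{-,M}}^{-1}$ and $T_{X_{-,m}}^{-1}$.

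The next step is to upgrade the R-linear rates of \cref{thm3p1-dual} to R-superlinear rates by root extraction. Part (i) of \cref{thm3p1-dual} furnishes a constant $C>0$ and $\sigma:=\rho_{\mathbb{D}}(T_{X_{-,M}}^{-1})^2<1$ with $\|Y_j+X_{-,M}\|\leq C\sigma^{\,j}$; substituting the subsequence index $j=r^k-1$ and taking $r^k$-th roots gives
\begin{equation*}
\sqrt[r^k]{\|\mathbf{H}_k+X_{-,M}\|}\leq C^{1/r^k}\,\sigma^{\,(r^k-1)/r^k},
\end{equation*}
whose $\limsup$ as $k\rightarrow\infty$ is at most $\sigma$, establishing (i). Part (ii) is identical, invoking the rate $\rho(T_{X_{-,m}}^{-1})^2=\mu(T_{X_{-,m}})^{-2}$ of \cref{thm3p1-dual}(ii) on the subsequence $\widehat{X}_k=\mathcal{D}_1^{(r^k-1)}(\widehat{X}_0)$, under the standing assumption $\rho(T_{X_{-,m}}^{-1})<1$ (equivalently $\mu(T_{X_{-,m}})>1$) that plays the role of $\rho(T_{X_{+,M}})<1$ in \cref{thm4p2}(ii).

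The main obstacle is not analytic but structural: one must verify that the entire dual apparatus of \cref{thm3p1-dual} genuinely applies, i.e.\ that $\widehat{H}\geq 0$ and the rank condition $\mathrm{rank}[A-\lambda I\ B]=n$ for $\lambda\in\bar{\mathbb{D}}\backslash\{0\}$ hold, since these are what drive existence, the monotone ordering of the two FPI sequences, and the spectral identity above, and that the acceleration never leaves $\mbox{dom}(\mathcal{D}_1)$. Once the semigroup identities $\mathbf{H}_k=\widehat{H}_{r^k-1}$ and $\widehat{X}_k=\mathcal{D}_1^{(r^k-1)}(\widehat{X}_0)$ are secured, the remainder is exactly the same root-extraction argument already used in \cref{thm4p2} and \cref{cor4p3}, so the proof is genuinely a dual transcription rather than a new analysis.
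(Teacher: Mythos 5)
Your proposal is correct and follows exactly the route the paper intends for this result (which it states without proof as a dual transcription of \cref{thm4p2}): use the semigroup/discrete flow property \eqref{DF} to identify $\mathbf{H}_k=\widehat{H}_{r^k-1}$ and $\widehat{X}_k=\mathcal{D}_1^{(r^k-1)}(\widehat{X}_0)$, then convert the R-linear rates of \cref{thm3p1-dual} into R-superlinear ones by extracting $r^k$-th roots along the subsequence. The only cosmetic caveat is that R-linear convergence in the sense of \cref{def-Rconv} gives $\|Y_j+X_{-,M}\|\leq C_\varepsilon(\sigma+\varepsilon)^j$ for every $\varepsilon>0$ rather than a bound with $\sigma$ itself, but the limsup computation goes through unchanged.
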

\begin{remark}
\noindent
\begin{enumerate}
  \item[(i)] Under the hypotheses of \cref{cor4p3}, the sequences $\{G_k\}_{k=0}^\infty$ and $\{\widehat{X}_k\}_{k=0}^\infty$ obtained in \cref{thm4p4}
  might have different speed of convergence, even though the minimal solution $X_{-,m}$ can be computed from their limits.
  It can be seen that the convergence of $\{G_k\}_{k=0}^\infty$ is at least R-superlinear,
  but the convergence of $\{\widehat{X}_k\}_{k=0}^\infty$ is unclear if $\mu(T_{X_{-,m}})=1$.
  \item[(ii)] Although the statement (ii) in either \cref{thm4p2} or
\cref{thm4p4} is inconclusive when $\rho(T_{X_{+,M}})=1$ or $\mu(T_{X_{-,m}})=1$, it is observed from our numerical experiments that the sequence $\{\widehat{X}_k\}_{k=0}^\infty$ may converge R-linearly with at least
the rate $1/r$, which will be investigated in the future and report the theoretical results elsewhere.
\end{enumerate}
\end{remark}

\section{Numerical examples} \label{sec5}
In this section, we present four examples to illustrate the accuracy and efficiency of
the AFPI($r$) in \cref{alg-afpi} for solving the extremal solutions of the DARE \eqref{dare-b}. In the first three examples we compared the AFPI algorithm, through the sequence $\{\widehat{X}_k\}_{k=0}^\infty$ starting with some suitable initial $\widehat{X}_0$, with Newton's method (NTM) \cite{g98} for solving the maximal and (almost) stabilizing solution $X_{+,M}\geq 0$ of DARE \eqref{dare-b}. Here $\widehat{X}_0\geq 0$ is the unique solution of Stein matrix equation
\begin{equation} \label{sme-X0}
\mathcal{S}_{A_F}(X) := X - A_F^H X A_F = H+F^H R F,
\end{equation}
which can be computed by MATLAB command {\tt dlyap} directly, if $A_F = A-BF$ is d-stable for some $F\in \mathbb{C}^{m\times n}$.
Moreover, we assume $R=I$ in all numerical examples.

As a byproduct, the sequence $\{\mathbf{H}_k\}_{k=0}^\infty$ generated by \cref{alg-afpi} converges at least R-superlinearly to the minimal positive semidefinite
solution $X_{+,m}$ of the DAREs in \cref{ex1} and \cref{ex2}, which is shown theoretically in \cref{thm4p2}. Moreover, we also demonstrate the ability of
the AFPI algorithm for computing the negative semidefinite extremal solutions to the DARE in \cref{ex4}. Starting with $\widehat{A}$, $\widehat{G}$, $\widehat{H}$ and $\widehat{X}_0$ defined as in \eqref{dual3}, it is seen in \cref{thm4p4} that the AFPI produces two sequnces of matrices $\{-\widehat{X}_k\}_{k=0}^\infty$
and $\{-\mathbf{H}_k\}_{k=0}^\infty$ converging to the minimal negative semidefinite solution $X_{-,m}$ and
the maximal negative semidefinite solution $X_{-,M}$ of the DARE \eqref{dare-b}, respectively.

For an approximate solution $Z$ to the DARE~\eqref{dare},
 we will report its normalized residual
\[  NRes(Z) := \frac{\|Z - \mathcal{R}(Z)\|}{\|Z\| + \|A^H Z(I+GZ)^{-1}A\| + \|H\|},  \]
and one of the following two quantities associated with $T_Z:= (I+GZ)^{-1}A$
\[ \rho (T_Z):= \max\{|\lambda| \ |\ \lambda\in \sigma (T_Z)\},\quad
\mu (T_Z):= \min\{|\lambda| \ |\  \lambda\in \sigma (T_Z)\}   \]
in the tables below.
We terminated the numerical methods AFPI and NTM when $NRes\leq 1.0\times 10^{-15}$ in Examples \ref{ex1}--\ref{ex3}, and
the AFPI algorithm terminated when $NRes\leq 1.0\times 10^{-12}$ in \cref{ex4}, respectively.
All numerical experiments were performed on ASUS laptop (ROG GL502VS-0111E7700HQ),
using Microsoft Windows 10 operating system and MATLAB Version R2019b,
with Intel Core i7-7700HQ CPU and 32 GB RAM.
\begin{example} \label{ex1}
Let the coefficient matrices of DARE \eqref{dare-b} be given by
\[  A = \begin{bmatrix} 3 & 0\\ 0 & 1/2 \end{bmatrix},\quad
B = \begin{bmatrix} 1\\ 0 \end{bmatrix},\quad
C = \begin{bmatrix} 0 & 1 \end{bmatrix}. \]
Then it is easily seen that the pair $(A,B)$ is stabilizable, but $(A,C)$ is not detectable.
Moreover, this DARE \eqref{dare-b} has only two positive semidefinite solutions, namely,
\[  X_{+,M} = \begin{bmatrix} 8 & 0\\ 0 & 4/3 \end{bmatrix},\quad X_{+,m} = \begin{bmatrix} 0 & 0\\ 0 & 4/3 \end{bmatrix},  \]
with $H = C^H C$. The matrix $X_{+,M}$ is the maximal and stabilizing solution of the DARE such that the eigenvalues of
$T_{X_{+,M}} = (I+GX_{+,M})^{-1}A$ are $1/3$ and $1/2$, and $\sigma (T_{X_{+,m}}) = \sigma (A)$, respectively.
Thus, $X_{+,m}$ is the minimal positive semidefinite solution of the DARE \eqref{dare-b} with
the property $\rho (T_{X_{+,m}}) = 3 > 1$.

If we choose $F = [3, 0]$ so that $A_F = A-BF$ is d-stable and $\widehat{X}_0$ is determined by \eqref{sme-X0}, then the numerical results of AFPI(2) and NTM
are presented in \cref{tab-1} and \cref{tab-2}, respectively,
where $\mathbf{A}_0 = A$, $\mathbf{G}_0 = BB^H$ and $\mathbf{H}_0 = H$ are required matrices for \cref{alg-afpi}.
Notice that our algorithm AFPI(2) generates two highly accurate approximations to the extremal solutions $X_{+,M}$ and $X_{+,m}$ simultaneously,
with the relative errors being $2.3\times 10^{-16}$ and $0.0\times 10^0$ respectively,
even though $\| \mathbf{A}_k\|$ increases rapidly as $k$ proceeds.
Nevertheless, starting from the same initial matrix $X_0$, the NTM only produces an accurate approximation to the solution $X_{+,M}$.
\begin{table}[tbhp] 
\begin{center}
\begin{tabular}{c|cc|cc|c} 
  \hline
  $k$ & $NRes(\widehat{X}_k)$ & $NRes(\mathbf{H}_k)$ & $\rho (T_{\widehat{X}_k})$ & $\rho (T_{\mathbf{H}_k})$ & $\|\mathbf{A}_k\|$ \\ 
 \hline\hline
  $1$ & $5.7\times 10^{-4}$ & $2.4\times 10^{-2}$  & $5.0\times 10^{-1}$ & $3.0\times 10^0$  & $9.0\times 10^0$ \\
  $2$ & $7.1\times 10^{-6}$ & $1.5\times 10^{-3}$  & $5.0\times 10^{-1}$ & $3.0\times 10^0$ & $8.1\times 10^1$ \\
    $3$ & $1.1\times 10^{-9}$ & $5.7\times 10^{-6}$  & $5.0\times 10^{-1}$ & $3.0\times 10^0$ & $6.6\times 10^3$ \\
    $4$ & $1.0\times 10^{-16}$ & $8.7\times 10^{-11}$  & $5.0\times 10^{-1}$ & $3.0\times 10^0$ & $4.3\times 10^7$ \\
    $5$ &  & $0.0\times 10^{0}$  &  & $3.0\times 10^0$& $1.9\times 10^{15}$ \\
   \hline
 \end{tabular}
\end{center}
\caption{Numerical results of AFPI(2) for \cref{ex1}.}\label{tab-1}
\end{table}
\begin{table}[tbhp]
  \begin{center}
    \begin{tabular}{c|cc}
       \hline
       $k$ & $NRes(X_k)$ & $\rho (T_{X_k})$ \\
       \hline\hline
       $1$ & $5.7\times 10^{-4}$ & $5.0\times 10^{-1}$ \\
       $2$ & $8.7\times 10^{-8}$ & $5.0\times 10^{-1}$ \\
       $3$ & $2.1\times 10^{-15}$ & $5.0\times 10^{-1}$ \\
       $4$ & $0.0\times 10^{0}$ & $5.0\times 10^{-1}$ \\
       \hline
     \end{tabular}
  \end{center}
  \caption{Numerical results of NTM for \cref{ex1}.}\label{tab-2}
\end{table}
\end{example}

\begin{example} \label{ex2}
In this example we consider the DARE \eqref{dare-b}
with its $5\times 5$ coefficient matrices being defined by
  \[ A = \begin{bmatrix}
    2.9 & 1 & 0 & 0 & 0\\
    0 & 2.9 & 0 & 0 & 0 \\
    0 & 0 & 0 & 0 & 0 \\
    0 & 0 & 0 & 0 & 0 \\
    0 & 0 & 0 & 0 & 1
     \end{bmatrix},\quad
     H = \begin{bmatrix}
    0 & 0 & 0 & 0 & 0\\
    0 & 0 & 0 & 0 & 0 \\
    0 & 0 & 200 & -0.5 & 0 \\
    0 & 0 & -0.5 & 200 & 0 \\
    0 & 0 & 0 & 0 & 1
     \end{bmatrix}   \]
  and $B = \mathrm{diag}(\sqrt{2}, 1, 0, 0, 1)$, respectively.
It can be shown that the minimal positive semidefinite solution $X_{+,m}$ of the DARE \eqref{dare-b} is
\[ X_{+,m} = \begin{bmatrix}
    0 & 0 & 0 & 0 & 0\\
    0 & 0 & 0 & 0 & 0 \\
    0 & 0 & 200 & -0.5 & 0 \\
    0 & 0 & -0.5 & 200 & 0 \\
    0 & 0 & 0 & 0 & (1+\sqrt{5})/2
     \end{bmatrix},   \]
which is almost the same as $H$ except the $(5,5)$-entry.
 If we let $F = \mathrm{diag}(2, 3, 0, 0, 0.5)$ so that $\rho (A_F) < 1$ and $\widehat{X}_0$ is determined by \eqref{sme-X0},
 the numerical results of AFPI(2) and NTM are reported in \cref{tab1-ex2} and \cref{tab2-ex2}, respectively, for solving the DARE \eqref{dare-b} from the same initial $\widehat{X}_0\geq 0$.
\begin{table}[tbhp] 
\begin{center}
\begin{tabular}{c|cc|cc|c}
  \hline
  $k$ & $NRes(\widehat{X}_k)$ & $NRes(\mathbf{H}_k)$ & $\rho (T_{\widehat{X}_k})$ & $\rho (T_{\mathbf{H}_k})$ & $\|\mathbf{A}_k\|$ \\
 \hline\hline
  $1$ & $9.0\times 10^{-5}$ & $2.5\times 10^{-4}$  & $3.8\times 10^{-1}$ & $2.9\times 10^0$  & $1.2\times 10^1$ \\
  $2$ & $1.7\times 10^{-6}$ & $5.6\times 10^{-6}$  & $3.8\times 10^{-1}$ & $2.9\times 10^0$ & $1.3\times 10^2$ \\
    $3$ & $7.1\times 10^{-10}$ & $2.6\times 10^{-9}$  & $3.8\times 10^{-1}$ & $2.9\times 10^0$ & $1.5\times 10^4$ \\
    $4$ & $7.2\times 10^{-17}$ & $5.2\times 10^{-16}$  & $3.8\times 10^{-1}$ & $2.9\times 10^0$ & $1.4\times 10^8$ \\
    \hline
 \end{tabular}
\end{center}
\caption{Numerical results of AFPI(2) for \cref{ex2}.}\label{tab1-ex2}
\end{table}
\begin{table}[tbhp]
  \begin{center}
    \begin{tabular}{c|cc}
       \hline
       $k$ & $NRes(X_k)$ & $\rho (T_{X_k})$ \\
       \hline\hline
       $1$ & $3.4\times 10^{-4}$ & $3.8\times 10^{-1}$ \\
       $2$ & $1.3\times 10^{-6}$ & $3.8\times 10^{-1}$ \\
       $3$ & $2.6\times 10^{-11}$ & $3.8\times 10^{-1}$ \\
       $4$ & $2.3\times 10^{-18}$ & $3.8\times 10^{-1}$ \\
       \hline
     \end{tabular}
  \end{center}
  \caption{Numerical results of NTM for \cref{ex2}.} \label{tab2-ex2}
\end{table}

 Notice that, after $4$ iterations, the AFPI(2) generates accurate approximations to the
 maximal and minimal positive semidefinite solutions of the DARE \eqref{dare-b} simultaneously, but NTM merely computes
 an approximation to the solution $X_{+,M}$. Since the exact maximal solution is unknown, we may
 apply the MATLAB command {\tt dare} to produce the accurate approximation of $X_{+,M}$. The relative errors of the $4$th iterates computed by AFPI(2) are
 \[  \frac{\|\widehat{X}_4 - X_{+,M}\|}{\|X_{+,M}\|} \approx 1.6\times 10^{-16},\quad
 \frac{\|\mathbf{H}_4 - X_{+,m}\|}{\|X_{+,m}\|}\approx 1.2\times 10^{-15},  \]
 respectively, which shows the feasibility of our proposed algorithm.
 \end{example}

 \begin{example}\label{ex3}
 This example is modified from Example 6.2 of \cite{g98}.
For $\varepsilon \geq 0$, the coefficient matrices of DARE \eqref{dare-b} are defined by
\begin{align*}
  A &= \mathrm{diag}\left( \begin{bmatrix}
    -1 & 0 & 0\\ 0 & 1 & \varepsilon \\ 0 & 0 & 1
  \end{bmatrix}, \begin{bmatrix} \frac{\sqrt{3}}{2} & \frac{1}{2} \\ \frac{-1}{2} & \frac{\sqrt{3}}{2} \end{bmatrix},
  \begin{bmatrix}
    \frac{1}{2} & 1 & 0\\ 0 & \frac{1}{2} & 1\\ 0 & 0 & \frac{1}{2}
  \end{bmatrix}    \right), \\
  B &= \begin{bmatrix}
    1 & 0 & 0 & 0 & 0 & 0 & 0 & 0\\
    1& 1 & \varepsilon & 0 & 0 & 0 & 0 & 0 \\
    0 & 1 & 1 & 0 & 0 & 0 & 0 & 0 \\
    0 & 0 & 1 & 1 & 0 & 0 & 0 & 0 \\
    0 & 0 & 0 & 1 & 1 & 0 & 0 & 0 \\
    0 & 0 & 0 & 0 & 1 & 1 & 0 & 0 \\
    0 & 0 & 0 & 0 & 0 & 1 & 1 & 0 \\
    0 & 0 & 0 & 0 & 0 & 0 & 1 & 1
  \end{bmatrix},\quad H = C^H C = 0\in \mathbb{R}^{8\times 8}.
\end{align*}
Then this DARE has a unique positive semidefinite solution $X_{+,M} = X_{+,m} = 0$, where $X_{+,M}$ is the almost stabilizing solution
with $\sigma(T_{X_{+,M}}) = \sigma (A)$ for all $\varepsilon\geq 0$.
Note that this DARE \eqref{dare-b} is just the same as the one appeared in Example 6.2 of \cite{g98} when $\varepsilon = 0$,
in which all unimodular eigenvalues of $A$ are semisimple.

If we choose $F = \mathrm{diag} (-1,1,1,1,1,0.1,0.1,0.1)$ for any $\varepsilon \geq 0$, then $A_F = A-BF$ is d-stable so that the initial matrix $\widehat{X}_0$ is the
unique positive semidefinite solution to the Stein equation \eqref{sme-X0}.
With $\mathbf{H}_0 = H = 0$, the AFPI($r$) generates $\mathbf{H}_k = 0$ for all $k\geq 0$ and $r\geq 2$.
When $\varepsilon = 0$, the convergence histories of NTM and AFPI($r$) are presented in \cref{fig-ex3}
for $r = 2,4, 8$ and $100$, respectively.
Moreover, the CPU times of these numerical methods are reported in \cref{tab1-ex3}.
\begin{table}[tbhp]
  \begin{center}
    \begin{tabular}{c|cc}
      \hline
      Method & Iter. No. & CPU Time (sec.) \\
      \hline\hline
      AFPI(2) & $50$ & $8.78\times 10^{-3}$ \\
      AFPI(4) & $25$ & $1.45\times 10^{-2}$   \\
      AFPI(8) & $17$ & $1.35\times 10^{-2}$  \\
       AFPI(100) & $8$ & $1.51\times 10^{-2}$  \\
        NTM & $50$ & $3.08\times 10^{-2}$  \\
      \hline
    \end{tabular}
    \caption{The CPU times of numerical methods for \cref{ex3} with $\varepsilon = 0$.} \label{tab1-ex3}
  \end{center}
\end{table}
\begin{figure}[tbhp]
\centering
\includegraphics[height=7cm,width=9cm]{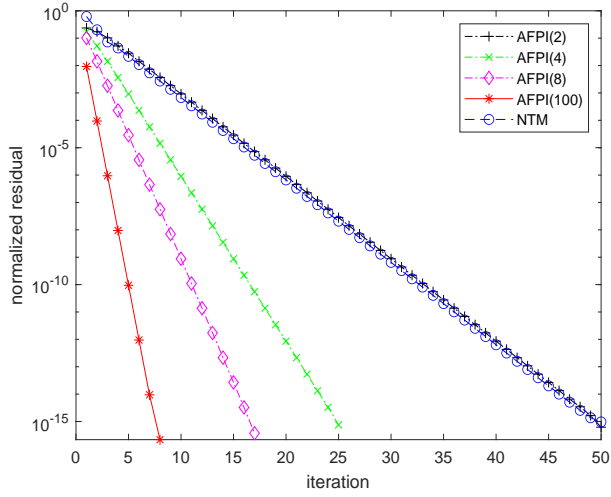}
\caption{Convergence histories of numerical methods for \cref{ex3} with $\varepsilon = 0$.}
\label{fig-ex3}
\end{figure}

As for $\varepsilon = 1$, the matrix $A$ contains an $2\times 2$ Jordan block corresponding to the eigenvalue $\lambda = 1$,
and the other unimodular eigenvalues are semisimple. In this case, the NTM encountered a breakdown after $54$ iterations, due to the stagnation of {\tt dlyap} for solving the corresponding Stein equation. It requires about $3.16\times 10^{-2}$ second to achieve the absolute error
$\|X_k - X_{+,M}\| \approx 5.4\times 10^{-9}$. On the other hand, the AFPI(100) produced a highly accurate approximation to the solution $X_{+,M} = 0$
with $\|\widehat{X}_k - X_{+,M}\| \approx 2.0\times 10^{-14}$ after $7$ iterations, starting from the same initial matrix $\widehat{X}_0$ determined as NTM by \eqref{sme-X0}.
Moreover, the numerical results of AFPI(100) are reported in \cref{tab2-ex3}. From the third column of \cref{tab2-ex3} it seems that
the AFPI(100) converges linearly with rate of convergence being $\frac{1}{r} = 1.00\times 10^{-2}$ approximately, but
this phenomenon will be further investigated in the future and reported elsewhere.
\begin{table}[tbhp] 
  \begin{center}
    \begin{tabular}{c|cc|cc}
      \hline
      $k$ & $\|\widehat{X}_k - X_{+,M}\|$ & $\frac{\|\widehat{X}_k - X_{+,M}\|}{\|\widehat{X}_{k-1} - X_{+,M}\|}$ & $\rho (T_{\widehat{X}_k})$ & $\|\mathbf{A}_k\|$ \\
      \hline\hline
      $1$ & $2.0\times 10^{-2}$ & $1.06\times 10^{-3}$ & $9.90\times 10^{-1}$ & $1.0\times 10^{2}$ \\
      $2$ & $2.0\times 10^{-4}$ & $1.02\times 10^{-2}$ & $1.0\times 10^{0}$ & $1.0\times 10^{4}$ \\
      $3$ & $2.0\times 10^{-6}$ & $1.00\times 10^{-2}$ & $1.0\times 10^{0}$ & $1.0\times 10^{6}$ \\
      $4$ & $2.0\times 10^{-8}$ & $1.00\times 10^{-2}$ & $1.0\times 10^{0}$ & $1.0\times 10^{8}$ \\
      $5$ & $2.0\times 10^{-10}$ & $1.00\times 10^{-2}$ & $1.0\times 10^{0}$ & $1.0\times 10^{10}$ \\
      $6$ & $2.0\times 10^{-12}$ & $1.00\times 10^{-2}$ & $1.0\times 10^{0}$ & $1.0\times 10^{12}$ \\
      $7$ & $2.0\times 10^{-14}$ & $1.00\times 10^{-2}$ & $1.0\times 10^{0}$ & $1.0\times 10^{14}$ \\
        \hline
    \end{tabular}
    \caption{Numerical results of AFPI(100) for \cref{ex3} with $\varepsilon = 1$.} \label{tab2-ex3}
    \end{center}
    \end{table}
\end{example}

\begin{example} \label{ex4}
This example will demonstrate the feasibility of our AFPI algorithm for solving
the negative semidefinite extremal solutions of the DARE \eqref{dare-b}. As quoted from Example 6.2 of \cite{z.c.l17}, the coefficient matrices
of DARE \eqref{dare-b} are given by
\[   A = \begin{bmatrix}
  4 & 3\\ \frac{-9}{2} & \frac{-7}{2}
\end{bmatrix},\quad B = \begin{bmatrix}
  6\\ -5
\end{bmatrix},\quad H = \begin{bmatrix}
  9 & 6\\ 6 & 4
\end{bmatrix}.  \]
Moreover, the authors in \cite{z.c.l17} suggested that the given DARE \eqref{dare-b}
should be transformed into its dual DARE \eqref{ddare}
for computing the negative semidefinite extremal solutions of the DARE \eqref{dare-b}, where
 \[ \widetilde{A} = \begin{bmatrix} 7 & 6\\ -9 & -8 \end{bmatrix},\quad
  \widetilde{B} = \begin{bmatrix} 12\\ -14 \end{bmatrix},\quad  \widetilde{C} = \begin{bmatrix} 24 & 16 \end{bmatrix},\quad
   \widetilde{R} = 65,\quad \widetilde{H} = H.    \]
 It is shown in \cite{z.c.l17} that the given DARE \eqref{dare-b} has three extremal solutions, namely,
\[  
X_{+,M} = X_{+,m} = \begin{bmatrix}
  \frac{9}{2}+\frac{9}{8}\sqrt{17} & 3+\frac{3}{4}\sqrt{17} \\[1ex]
  3+\frac{3}{4}\sqrt{17} & 2+\frac{\sqrt{17}}{2}
\end{bmatrix} \]
and
\[ X_{-,M} = \begin{bmatrix}
  \frac{9}{2}-\frac{9}{8}\sqrt{17} & 3-\frac{3}{4}\sqrt{17} \\[1ex]
  3-\frac{3}{4}\sqrt{17} & 2-\frac{\sqrt{17}}{2}
\end{bmatrix},\quad X_{-,m} = \begin{bmatrix}
  \frac{-103}{12}-\frac{\sqrt{17}}{8} & \frac{-39}{4}-\frac{\sqrt{17}}{4} \\[1ex]
   \frac{-39}{4}-\frac{\sqrt{17}}{4} & \frac{-43}{4}-\frac{\sqrt{17}}{2}
\end{bmatrix},   \]
where $X_{+,M}\geq 0$ is the maximal and stabilizing solution, $X_{-,M}$ is the maximal negative semidefinite solution and
$X_{-,m}\leq 0$ is the minimal sesolution, respectively.

 Since it can be checked that $90I \in \mathcal{D}_\geq^{(2)}\cap\mathbb{N}_n$ and $(A,B)$ is controllable, the sufficient conditions of \cref{cor4p3} are satisfied.
If we let $F = [-0.58, -0.68]$ so that $A_F$ is d-stable, then the AFPI(2), with $\mathbf{A}_0 = A$, $\mathbf{G}_0 = G$, $\mathbf{H}_0 = H$ and
$\widehat{X}_0$ being determined by \eqref{sme-X0}, produced highly accurate approximations to $X_{+,M}$ and $X_{-,m}$
with relative errors being $1.8\times 10^{-16}$ and $5.9\times 10^{-14}$, respectively, after $5$ iterations.

In addition, for computing the negative semidefinite extremal solutions to the DARE \eqref{dare-b},
we consider the dual DARE \eqref{dual}
with $\widehat{B} = \widetilde{B}$, $\widehat{R} = \widetilde{R}$, $\widehat{A} = \widetilde{A} - \widehat{B}\widehat{R}^{-1}\widetilde{C}$,
$\widehat{G} = \widehat{B}\widehat{R}^{-1}\widehat{B}^H$ and $\widehat{H} = \widetilde{H} - \widetilde{C}^H \widehat{R}^{-1}\widetilde{C}$, respectively.
Since the pair $(\widehat{A}, \widehat{B})$ is stabilizable, we may choose $\widehat{F} = [0.62, 0.52]$ so that
the matrix $\widehat{A}_{\widehat{F}} := \widehat{A} - \widehat{B}\widehat{F}$ is d-stable.
Let $\widehat{X}_0\geq 0$ be the unique solution to the Stein equation
\begin{equation*} 
  \mathcal{S}_{\widehat{A}_{\widehat{F}}}(X) = \widehat{H} + \widehat{F}^H \widehat{R} \widehat{F} \geq 0.
 \end{equation*}
Applying \cref{alg-afpi} with $\mathbf{A}_0 = \widehat{A}$, $\mathbf{G}_0 = \widehat{G}$, $\mathbf{H}_0 = \widehat{H}$
and $\widehat{X}_0$, the AFPI(4) generates highly accurate approximations $\mathbf{H}_3\approx -X_{-,M}$ and
$\widehat{X}_3\approx -X_{-,m}$ with relative errors being $4.1\times 10^{-14}$ and $7.4\times 10^{-16}$, respectively, after $3$ iterations.
Moreover, the numerical results of AFPI(4) are reported in \cref{tab1-ex4}. From the $5$th column of \cref{tab1-ex4}, we see that
our AFPI can provide a good approximation to the minimal and antistabilizing solution $X_{-,m}$ of the DARE \eqref{dare}.
\begin{table}[tbhp] 
\begin{center}
\begin{tabular}{c|cc|cc|c}
  \hline
  $k$ & $NRes(-\mathbf{H}_k)$ & $NRes(-\widehat{X}_k)$ & $\mu (T_{-\mathbf{H}_k})$ & $\mu (T_{-\widehat{X}_k})$ & $\|\mathbf{A}_k\|$ \\
 \hline\hline
  $1$ & $2.1\times 10^{-13}$ & $9.5\times 10^{-1}$  & $5.0\times 10^{-1}$ & $2.0\times 10^0$  & $5.7\times 10^1$ \\
  $2$ & $2.1\times 10^{-13}$ & $5.1\times 10^{-8}$  & $5.0\times 10^{-1}$ & $2.0\times 10^0$ & $2.3\times 10^5$ \\
    $3$ & $2.1\times 10^{-13}$ & $7.4\times 10^{-13}$  & $5.0\times 10^{-1}$ & $2.0\times 10^0$ & $1.3\times 10^{-1}$ \\
       \hline
 \end{tabular}
\end{center}
\caption{Numerical results of AFPI(4) for \cref{ex4}.}\label{tab1-ex4}
\end{table}
\end{example}

\section{Concluding remarks}\label{sec6}
 In most of the past works, it is always assumed that the DARE has a unique maximal positive semidefinite solution $X$ with $\rho(T_X)\leq 1$ and another meaningful solutions are lacking in brief discussion. Our contribution fills in the existing gap in finding four extremal solutions of the DARE. More precisely, we have studied the existence of minimal and maximum solutions under mild assumptions. It is important to note that we no longer need the traditional assumptions on the distribution of the spectrum of the symplectic matrix pencil or Popov matrix function, but give mathematical conditions from different point of view.

This paper concerns comprehensive convergence analysis of the most recent advanced algorithms AFPI, including its variant SDA, for solving the extremal solutions of DARE. We verify that the AFPI works efficiently and the convergence speed is R-superlinear under the mild assumptions. As compared to the previous works, the theoretical results presented here are merely deduced from the framework of the fixed-point iteration, using basic assumptions and elementary matrix theory. We believe the results we obtain are novel on this topic and could provide considerable insights into the study of unmixed solutions of DARE.


\end{document}